\begin{document}

\newtheorem{lem}{Lemma}[section]
\newtheorem{pro}[lem]{Proposition}
\newtheorem{defi}[lem]{Definition}
\newtheorem{def/not}[lem]{Definition/Notations}
\newtheorem{thm}[lem]{Theorem}
\newtheorem{ques}[lem]{Question}
\newtheorem{cor}[lem]{Corollary}
\newtheorem{rem}[lem]{Remark}
\newtheorem{rqe}[lem]{Remarks}
\newtheorem{exa}[lem]{Example}
\newtheorem{exas}[lem]{Examples}
\newtheorem{obs}[lem]{Observation}
\newtheorem{corcor}[lem]{Corollary of the corollary}
\newtheorem*{ackn}{Acknowledgements}

\newcommand{\C}{\mathbb{C}}
\newcommand{\R}{\mathbb{R}}
\newcommand{\N}{\mathbb{N}}
\newcommand{\Z}{\mathbb{Z}}
\newcommand{\Q}{\mathbb{Q}}
\newcommand{\Proj}{\mathbb{P}}
\newcommand{\Rc}{\mathcal{R}}
\newcommand{\Oc}{\mathcal{O}}
\newcommand{\diff}{\textit{diff}}

\begin{center}

{\Large\bf H\"{o}lder continuous solutions to quaternionic Monge-Amp\`{e}re equations}

\end{center}

\begin{center}

{\large Fadoua Boukhari}

\end{center}



\vspace{1ex}

\noindent{\small{\bf Abstract.}
We prove the H\"{o}lder continuity of the unique solution to quaternionic Monge-Amp\`{e}re equation with densities in $L^{p},$ $p>2,$ on a bounded strictly pseudoconvex domains.
\vspace{1ex}

\section*{Introduction}\label{section:introduction}
Recently, people are interested in developing Quaternion analysis, which has become an important branch of mathematics, has many application in mathematical physics. The quaternionic Monge-Ampère operator is defined as the Moore determinant of the quaternionic Hessian of $u$: $$ det(u)=det[\frac{\partial^{2}u}{\partial q_{j}\partial\overline{q}_{k}}(q)].$$
The following Dirichlet problem for the quaternionic Monge-Amp\`{e}re equation in $\Omega\subset\mathbb{H}^{n}:$
\begin{equation}\label{DP11}
\left\{
     \begin{array}{ll}
        det[\frac{\partial^{2}u}{\partial q_{j}\partial\overline{q}_{k}}(q)]=f,\\
        \displaystyle\lim_{q'\rightarrow q }u(q')=\varphi(q), \;\;\;\; \forall q\in \partial\Omega, \varphi\in C(\partial\Omega)
     \end{array}
   \right.
\end{equation}
It has been shown by Alesker \cite[Theorem 1.3]{A3} that (\ref{DP11}) is solvable when $\Omega$ is a strictly pseudoconvex domain and $f\in C(\overline{\Omega}),$ $f\geq0,$ $\varphi\in C(\partial\Omega)$ and the solution is continuous on $\overline{\Omega}.$ For the smooth case, in \cite[Theorem 1.4]{A3} S.Alesker proved a result on existence and uniqueness of the smooth solution of (\ref{DP11}) when the domain $\Omega$ is the Euclidean ball $B$ in $\mathbb{H}^{n}$ and $f\in C^{\infty}(\overline{\Omega}),$ $f>0,$ $\varphi\in C^{\infty}(\partial\Omega).$ He said the reason why he failed to solve
(\ref{DP11}) on general strictly pseudoconvex bounded domains is the fact that the class of diffeomorphisms preserving the class of quaternionic plurisubharmonic (psh) functions must be affine transformations. Relating to this problem, the Dirichlet problem for quaternionic Monge-Amp\'{e}re equations on arbitrary strictly pseudoconvex bounded domains was an open problem. For solving this issue, Zhu proved in \cite{Z} the existence of a subsolution to the Dirichlet problem in quaternionic strictly pseudoconvex bounded domain. By this end and the fact that the subsolutions lead the solutions \cite[Theorem 1.1]{Z} Zhu proved that (\ref{DP11}) is solvable when $\Omega$ is a strictly pseudoconvex bounded domain and $f\in C^{\infty}(\overline{\Omega}),$ $f>0,$ $\varphi\in C^{\infty}(\partial\Omega)$ and the solution is in $C^{\infty}(\overline{\Omega}).$\\
Sroka in \cite{SM} found a continuous solution of this problem (\ref{DP11}) under the much milder assumption $f\in L^{p}(\Omega),$ $p>2.$ \\ To develop the quaternionic pluripotential theory, Alesker defined the quaternionic Monge-Ampère operator on general quaternionic manifolds, he introduced in \cite{A2} an operator in terms of the Baston operator $\Delta,$ which is the first operator of the quaternionic complex on quaternionic manifolds. The n-th power of this operator is exactly the quaternionic Monge-Ampère operator when the manifold is flat. On the flat space $\mathbb{H}^{n},$ the Baston operator $\Delta$ is the first operator of $0$-Cauchy-Fueter complex: \begin{eqnarray}\label{ff}
0\longrightarrow C^{\infty}(\Omega,\mathbb{C})\longrightarrow^{\Delta} C^{\infty}(\Omega,\wedge^{2}\mathbb{C}^{2n})\longrightarrow^{D}C^{\infty}(\Omega,\wedge^{3}\mathbb{C}^{2n})\longrightarrow\ldots
         \end{eqnarray}

Wang \cite{W1} wrote down explicity each operator of the $k$-Cauchy-Fueter complex in terms af real variables.\\
Motivated by this, D.Wan and W.Wang introduced in \cite{WW} two first-order differentiel operators $d_{0}$ and $d_{1}$ acting on the quaternionic version of differentiel forms. The second operator $D$ in (\ref{ff}) can be written as $D:=\left(
                                                                               \begin{array}{c}
                                                                                 d_{0} \\
                                                                                 d_{1} \\
                                                                               \end{array}
                                                                             \right).$
The behavior
of $d_{0},$ $d_{1},$ and $\Delta = d_{0}d_{1}$ is very similar to $\partial,$ $\overline{\partial},$ and $\partial\overline{\partial}$ in several complex variables.
The quaternionic Monge-Ampère operator can be defined as $(\Delta u)^{n}=(d_{0}d_{1}u)^{n}$ and has a simple explicit expression, which is much more convenient than the definition by using Moore determinant. Based on this observation, some authors established and developed the quaternionic versions of several results in complex pluripotentiel theory (for more informations see \cite{WW,WZ,WK}).\\
Motivated by this, we consider The following Dirichlet problem for the quaternionic Monge-Amp\`{e}re equation in a given strictly pseudoconvex domain $\Omega\subset\mathbb{H}^{n}:$
\begin{equation}\label{DP12}
\left\{
     \begin{array}{ll}
        u\in\mathcal{PSH}(\Omega)\cap C(\overline{\Omega}),\\
        (\Delta u)^{n}=d\nu,\\
        \displaystyle\lim_{q'\rightarrow q }u(q')=\psi(q), \;\;\;\; \forall q\in \partial\Omega, \psi\in C(\partial\Omega)
     \end{array}
   \right.
\end{equation}

The purpose of this paper is to study the regularity of solutions to this problem. To begin with, we describe the background. The H\"{o}lder continuous solutions to complex Monge-Amp\`{e}re equations was proved by \cite{GKZ}. In particulier, it is proved that the solution is H\"{o}lder continuous if $d\nu=fdV,$ $ 0\leq f\in L^{p},$ $p>1,$ and $\varphi$ is H\"{o}lder continuous. Then we are going to follow the method of \cite{GKZ} to prove our main result, which is the following Theorem.\\
\textbf{Theorem.}
\textit{Let $\Omega$ be a bounded strongly pseudoconvex domain of $\mathbb{H}^{n}$ with smooth boundary. Assume that $\psi$ is $C^{1,1}$ on $\partial\Omega$ and $0\leq f\in L^{p}(\Omega),$  for some $p>2.$ Then the unique solution $u\in PSH(\Omega)\cap C(\overline{\Omega})$ to the problem \ref{DP12} for $d\nu=fdV$, belongs to $C^{0,\alpha}(\overline{\Omega})$ for any $0<\alpha<\frac{2}{qn+1+\frac{nq}{\frac{2}{q}-1}},$ where $\frac{1}{p}+\frac{1}{q}=1.$}\\

The paper is organized as follows. In section \ref{11111}, we recall basic facts about plurisubharmonic functions, and the quaternionic Monge-Amp\`{e}re operator. In section \ref{mod}, we give an estimate of the modulus of continuity of the solution to the Dirichlet problem for the quaternionic Monge-Amp\`{e}re equation, and prove its useful consequence (Corollary \ref{cor2}) which plays key role in the rest. In section \ref{SE}, we prove our main tool which is the stability estimate. In section \ref{SDPPH}, we show that the unique solution to the quaternionic Monge-Amp\`{e}re equation with densities in $L^{p},$ $p>2,$ is H\"{o}lder continuous if the boundary data $\psi$ is so.\\

\section{Preliminaries}\label{11111}
\subsection{Plurisubharmonic functions of quaternionic variables}
In this part, let us remind few standard notions by \cite{A}.
\begin{defi}
A real valued function $f:\Omega\subset\mathbb{H}^{n}\longrightarrow\mathbb{R}$ is called quaternionic plurisubharmonic if it is upper semi-continous and its restriction to any right quaternionic line is subharmonic.
\end{defi}
\begin{rqe}\label{rqe1}
On $\mathbb{H}^{1}$ the class of plurisubharmonic functions coincides with the class of subharmonic functions in $\mathbb{R}^{4}.$
\end{rqe}
\begin{defi}
Let $\Omega$ be a bounded domain in $\mathbb{H}^{n}.$ Then $\Omega$ is called strictly pseudoconvex if there exists a strictly plurisubharmonic defining function $\varphi,$ i.e, $\Omega=\{q\in\mathbb{H}^{n};\varphi(q)< 0\}.$
\end{defi}
The analogous classical results for subharmonic functions also holds for the quaternionic plurisubharmonic functions. We list these properties here without proofs; all of them can be derived from the subharmonic case (see \cite[Chapter 2]{K}).
\begin{pro}
\begin{enumerate}
  \item If $u\in C^{2}$ then $u$ is plurisubharmonic if and only if the form $\Delta u$ is positive in $\Omega.$
  \item If $u,v\in PSH(\Omega)$ then $\lambda u+\mu v\in PSH(\Omega),$ $\forall\lambda,\mu>0$
  \item If $u$ is plurisubharmonic in $\Omega$ then the standard regularization $u\ast\chi_{\epsilon}$ are also plurisubharmonic in $\Omega_{\epsilon}:=\{q\in\Omega/ d(q,\partial\Omega)>\epsilon\}.$
  \item If $(u_{l})\subset PSH(\Omega)$ is locally uniformly bounded from above then $(\sup u_{l})^{*}\in PSH(\Omega),$ where $v^{*}$ is the upper semi continuous regularization of $v$
  \item $PSH(\Omega)\subset SH(\Omega).$
  \item Let $\emptyset\neq U\subset\Omega$ be a proper open subset such that $\partial U\cap\Omega$ is relativement compact in $\Omega.$ If $u\in PSH(\Omega),$ $v\in PSH(\Omega)$ and $\limsup_{q\longrightarrow q'}v(q)\leq u(q')$ for each $q'\in\partial U\cap\Omega$ then the function $w,$ defined by $$w(q)=\left\{\begin{array}{ll}
                       u(q), & \hbox{$q\in\Omega\setminus U$;} \\
                       \max(u(q),v(q)), &\hbox{$q\in U$.}
                     \end{array}
                   \right.$$
      is plurisubharmonic in $\Omega.$
\end{enumerate}
\end{pro}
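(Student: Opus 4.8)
The plan is to reduce each of the six assertions to its classical counterpart for subharmonic functions, using the definition that $u$ is quaternionic plurisubharmonic precisely when it is upper semicontinuous and, for every right quaternionic line $\ell_{a,v}\colon\mathbb{H}\to\mathbb{H}^{n}$, $q\mapsto a+qv$ (with $v\in\mathbb{H}^{n}$), the function $u\circ\ell_{a,v}$ is subharmonic on $\mathbb{R}^{4}\cong\mathbb{H}$ (Remark \ref{rqe1}). Two elementary facts will be used throughout: the chain rule $\frac{\partial^{2}(u\circ\ell_{a,v})}{\partial q\,\partial\overline{q}}=\sum_{j,k}\overline{v}_{j}\,\frac{\partial^{2}u}{\partial q_{j}\partial\overline{q}_{k}}(a+qv)\,v_{k}$ for $u\in C^{2}$, so that the quaternionic Hessian transforms as a Hermitian form under right-linear substitution and restricting to a line amounts to the scalar form $v\mapsto \overline{v}^{\top}[\partial^{2}u/\partial q_{j}\partial\overline{q}_{k}]v$; and the identity $\frac{\partial^{2}}{\partial q\,\partial\overline{q}}=\tfrac{1}{4}\Delta_{\mathbb{R}^{4}}$ on $\mathbb{H}^{1}$, which yields $\operatorname{tr}\!\big[\frac{\partial^{2}u}{\partial q_{j}\partial\overline{q}_{k}}\big]=\tfrac{1}{4}\Delta_{\mathbb{R}^{4n}}u$.

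For (1) I would note that, by the $C^{2}$ chain rule, positivity of the form $\Delta u$ — equivalently, positive semidefiniteness of the quaternionic Hessian — is equivalent to $\Delta_{\mathbb{R}^{4}}(u\circ\ell_{a,v})\ge0$ for every right line, which for a $C^{2}$ function means exactly that every $u\circ\ell_{a,v}$ is subharmonic, i.e.\ that $u$ is plurisubharmonic. For (2): restriction commutes with linear combinations, $\lambda(u\circ\ell)+\mu(v\circ\ell)$ is a nonnegative combination of subharmonic functions hence subharmonic, and upper semicontinuity is stable under nonnegative linear combinations. For (3): $u_{\varepsilon}:=u\ast\chi_{\varepsilon}$ is smooth on $\Omega_{\varepsilon}$, and $(u_{\varepsilon}\circ\ell_{a,v})(q)=\int u\big((a-w)+qv\big)\,\chi_{\varepsilon}(w)\,dV(w)$; for each fixed $w$ the integrand is the restriction of $u$ to the right line through $a-w$ in direction $v$, hence subharmonic in $q$, and the sub-mean-value inequality survives integration against the nonnegative kernel $\chi_{\varepsilon}$, so $u_{\varepsilon}\in PSH(\Omega_{\varepsilon})$.

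For (5) I would first treat $u\in C^{2}$: by (1) the trace of the quaternionic Hessian is nonnegative, so $\Delta_{\mathbb{R}^{4n}}u\ge0$ and $u\in SH(\Omega)$; for a general $u$ I would apply this to the $u_{\varepsilon}$ of (3) and let $\varepsilon\to0$, using upper semicontinuity together with the reverse Fatou lemma to recover the spherical sub-mean-value inequality for $u$ itself. For (4): Choquet's lemma reduces the supremum to a countable subfamily, finite maxima of plurisubharmonic functions are plurisubharmonic (a finite max of subharmonic functions is subharmonic on each line), so $u:=\sup_{l}u_{l}$ is an increasing limit of plurisubharmonic functions; $u^{*}$ is automatically upper semicontinuous, and its subharmonicity on each right line follows from (5) together with the classical fact that for an increasing sequence of subharmonic functions locally bounded above the upper regularization of the limit is subharmonic and agrees with the limit off a polar set. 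For (6): $w$ is upper semicontinuous because $\max(u,v)$ is, and the hypothesis $\limsup_{q\to q'}v(q)\le u(q')$ prevents an upward jump of $w$ across $\partial U\cap\Omega$, where $w=u$; restricting to any right line turns the claim into the classical gluing lemma for subharmonic functions, so $w\in PSH(\Omega)$.

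I expect the delicate point to be (4), and behind it (5): the regularization $u^{*}$ is taken in the topology of $\mathbb{H}^{n}\cong\mathbb{R}^{4n}$, whereas subharmonicity is tested on the $4$-dimensional right lines, so one must verify that $u^{*}|_{\ell_{a,v}}$ still dominates — and in fact coincides almost everywhere with — the one-dimensional upper regularization of $u|_{\ell_{a,v}}$; it is precisely the inclusion $PSH\subset SH$ from (5) that supplies the spherical sub-mean-value inequality allowing one to control $u^{*}$ by integral averages and to run the standard Choquet/Bedford--Taylor argument. Everything else is a routine transcription of the subharmonic case, for which I would follow \cite[Chapter 2]{K}.
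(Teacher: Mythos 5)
Your proposal is correct and takes essentially the same approach as the paper: the paper states this proposition without proof, remarking only that every item reduces to the classical subharmonic case via restriction to right quaternionic lines (citing \cite[Chapter 2]{K}), which is exactly the reduction you carry out. Your sketch correctly isolates the one genuinely delicate point, namely that in item (4) the upper regularization is taken in $\mathbb{R}^{4n}$ while subharmonicity is tested on four-dimensional lines, and resolves it the standard way through item (5).
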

Denote by $\mathcal{PSH}$ the class of all quaternionic plurisubharmonic functions (cf.\cite{A,A1,A2}) for more information about plurisubharmonic functions).\\
For the complex case, it is well know that the psh functions are locally integrable with any exponent, but in the quaternionic case we have this following result for local integrability of psh functions.
\begin{pro}(Proposition 2 in \cite{SM})\\
Suppose $u\in PSH(\Omega)$ is such that $u\neq-\infty.$ Then $u\in L^{p}_{loc}(\Omega)$ for any $p<2$ and the bound on $p$ is optimal. What is more if $u_{j}\neq -\infty$ is a sequence of psh functions converging in $L^{1}_{loc}(\Omega)$ to some $u,$ neccessarily belonging to $PSH(\Omega),$ then convergence holds in $L^{p}_{loc}(\Omega)$ for any $p<2.$
\end{pro}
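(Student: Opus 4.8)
The plan is to prove the local $L^p$ bound by induction on $n$, peeling off one quaternionic variable at a time. Write $q=(q_{1},q')\in\mathbb{H}\times\mathbb{H}^{n-1}$ and work on a product ball $B=B^{(4)}_{R}\times B'\Subset\Omega$ on which $u\le 0$; this is always achievable after subtracting a constant, since $u$ is bounded above near any point, and it suffices because such products cover $\Omega$. The base case $n=1$ is the classical fact (Remark \ref{rqe1}) that a function subharmonic and $\not\equiv-\infty$ in $\R^{4}$ lies in $L^{p}_{loc}$ for $p<2$: locally $u=P_{\mu}+h$ with $h$ harmonic, $\mu=\Delta u\ge 0$, and $P_{\mu}(t)=c_{4}\int|t-s|^{-2}\,d\mu(s)$, and $\int_{|x|\le 1}|x|^{-2p}\,dV_{4}(x)<\infty$ exactly when $p<2$. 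One also records the \emph{quantitative} form: for $v$ subharmonic and $\le 0$ on $B^{(4)}_{R}\subset\R^{4}$ and $p<2$, testing $\Delta v$ against a fixed cutoff gives $(\Delta v)(B^{(4)}_{\rho})\le C\,\|v\|_{L^{1}(B^{(4)}_{R})}$, and Jensen applied to the kernel $|t-s|^{-2}$ then yields $\|v\|_{L^{p}(B^{(4)}_{r})}\le C_{p,r,\rho,R}\,\|v\|_{L^{1}(B^{(4)}_{R})}$, with the constant independent of $v$ (the harmonic remainder being locally bounded by its $L^{1}$ norm).

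For the inductive step, apply the base‑case quantitative estimate on each slice: by Fubini, for a.e.\ $q'$ the function $q_{1}\mapsto u(q_{1},q')$ is subharmonic, $\le 0$, and $\not\equiv-\infty$ on $B^{(4)}_{R}$, so $\|u(\cdot,q')\|_{L^{p}(B^{(4)}_{r})}\le C\,\|u(\cdot,q')\|_{L^{1}(B^{(4)}_{R})}$. Raising to the power $p$ and integrating in $q'$,
\[
\int_{B^{(4)}_{r}\times B'}|u|^{p}\,dV\;\le\;C^{p}\int_{B'}\Big(\int_{B^{(4)}_{R}}|u(q_{1},q')|\,dV(q_{1})\Big)^{p}dV(q')\;=\;C'\int_{B'}|\bar u(q')|^{p}\,dV(q'),
\]
where $\bar u(q'):=\int_{B^{(4)}_{R}}u(q_{1},q')\,dV(q_{1})\le 0$. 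Now $\bar u$ is itself plurisubharmonic on $B'\subset\mathbb{H}^{n-1}$: right quaternionic lines contained in a coordinate slice $\{q_{1}=c\}$ are right quaternionic lines of $\mathbb{H}^{n}$, so $q'\mapsto u(c,q')$ is plurisubharmonic for each $c$, and averaging over $q_{1}$ against a positive measure preserves upper semicontinuity (reverse Fatou) and the sub‑mean‑value inequality; moreover $\bar u\not\equiv-\infty$ because $u\in L^{1}_{loc}$. The inductive hypothesis gives $\bar u\in L^{p}_{loc}(B')$ for $p<2$, hence $u\in L^{p}(B^{(4)}_{r}\times B')$; covering $\Omega$ completes the bound. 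Optimality is witnessed by $u(q)=-|q_{1}|^{-2}$: as a negative multiple of the fundamental solution of $-\Delta$ on $\R^{4}$ depending only on $q_{1}$, its restriction to any right line is a subharmonic function composed with a similarity of $\R^{4}$, so $u\in PSH(\mathbb{H}^{n})$ and $u\not\equiv-\infty$, yet $\int_{\{|q_{1}|\le 1\}\times B'}|q_{1}|^{-2p}\,dV=\infty$ for every $p\ge 2$.

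For the second assertion, given plurisubharmonic $u_{j}\not\equiv-\infty$ converging to $u$ in $L^{1}_{loc}$, one has $u\in PSH(\Omega)$ by the standard argument (pass to a subsequence along which the slices converge in $L^{1}$ on almost every right line, each being an $L^{1}$‑limit of subharmonic functions). Because every constant occurring above depends only on $p$ and on the fixed balls, the inductive estimate iterates to a uniform bound $\sup_{j}\|u_{j}\|_{L^{p_{1}}(K)}<\infty$ for any $K\Subset\Omega$ and any $p_{1}<2$, once one notes $\sup_{j}\|u_{j}\|_{L^{1}(K')}<\infty$ (from the convergence) and that $\sup_{K}u_{j}$ is itself controlled by $\|u_{j}\|_{L^{1}(K')}$ via the sub‑mean‑value inequality. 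Fixing $p<p_{1}<2$ and interpolating, $\|u_{j}-u\|_{L^{p}(K)}\le\|u_{j}-u\|_{L^{1}(K)}^{\theta}\,\|u_{j}-u\|_{L^{p_{1}}(K)}^{1-\theta}\to 0$, which is the asserted $L^{p}_{loc}$ convergence.

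The delicate point — and the reason mere subharmonicity in $\R^{4n}$, which only yields $p<\tfrac{2n}{2n-1}$, is not enough — is the structural step that the slice average $\bar u$ is \emph{again} quaternionic plurisubharmonic in the remaining $n-1$ variables, so that the slice Riesz masses can be absorbed into an $L^{1}$‑norm to which the induction applies; this uses precisely that coordinate slices of right quaternionic lines are right quaternionic lines, i.e.\ genuine plurisubharmonicity rather than bare subharmonicity. The remainder is routine: the elementary kernel estimates in $\R^{4}$ and the almost‑everywhere versus everywhere bookkeeping, handled throughout by Fubini and upper semicontinuous regularization.
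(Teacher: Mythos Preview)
The paper does not prove this proposition; it is quoted without argument as Proposition~2 of \cite{SM} and used as a black box in what follows. There is therefore no proof in the present paper to compare your proposal against.

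That said, your argument is correct and self-contained. The induction on the number of quaternionic variables, peeling off one $\mathbb{H}$-factor at a time via the quantitative $L^{1}\to L^{p}$ bound for nonpositive subharmonic functions in $\R^{4}$, is exactly the right mechanism, and the key structural step---that the partial average $\bar u(q')=\int_{B^{(4)}_{R}}u(q_{1},q')\,dV(q_{1})$ is again quaternionic plurisubharmonic in $q'$---is justified precisely as you say: for fixed $q_{1}$ the map $q'\mapsto u(q_{1},q')$ is the restriction of $u$ to a coordinate slice, whose right quaternionic lines are right quaternionic lines of $\mathbb{H}^{n}$, and averaging a family of subharmonic (in $\xi$) functions against the nonnegative measure $dV(q_{1})$ preserves both the sub-mean-value inequality and, since $u\le 0$, upper semicontinuity. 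The optimality witness $u(q)=-|q_{1}|^{-2}$ is correct for the reason you give (right multiplication by $b_{1}\neq 0$ is a similarity of $\R^{4}$), and the convergence statement via interpolation between $L^{1}$ and $L^{p_{1}}$ is standard once the inductive chain is seen to yield a uniform estimate $\|v\|_{L^{p_{1}}(\mathrm{small})}\le C\|v\|_{L^{1}(\mathrm{large})}$ for all nonpositive $v\in PSH$. The one place to be slightly more explicit in a final write-up is the normalization step for the sequence: you need $u_{j}\le 0$ on the larger product ball before the inductive estimate applies uniformly, and this is achieved by subtracting $\sup_{K'}u_{j}$, which is bounded in terms of $\|u_{j}\|_{L^{1}(K'')}$ by the sub-mean-value inequality, exactly as you indicate.
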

\subsection{The operators $d_{0},$ $d_{1}$ and the Baston operator $\Delta$}
We use the well-known embedding of the quaternionic algebra $\mathbb{H}$ into $End(\mathbb{C}^{2})$ defined by
\begin{eqnarray*}
 \tau : \mathbb{H} &\longrightarrow& \mathbb{C}^{2\times 2} \\
 \  \ q=x_{0}+ix_{1}+jx_{2}+kx_{3} &\longrightarrow& \left(
                                                   \begin{array}{cc}
                                                     x_{0}-ix_{1} & -x_{2}+ix_{3} \\
                                                     x_{2}+ix_{3} & x_{0}+ix_{1} \\
                                                   \end{array}
                                                 \right)
\end{eqnarray*}
Actually we use the conjugate embedding
\begin{eqnarray*}
     \tau : \mathbb{H}^{n}\cong \mathbb{R}^{4n} &\longrightarrow& \mathbb{C}^{2n\times 2}\\
        (q_{0},q_{1},...q_{n-1}) &\longrightarrow& z=(z^{j\alpha})\in \mathbb{C}^{2n\times2}
      \end{eqnarray*}
 with $q_{j}=x_{4j}+ix_{4j+1}+jx_{4j+2}+kx_{4j+3},$ $j=0,1,...,2n-1,$ $\alpha=0,1,$
 with \begin{eqnarray}\label{eq1}
           \left(
           \begin{array}{cc}
             z^{00} & z^{01} \\
             z^{10} & z^{11} \\
             \vdots & \vdots \\
             z^{(2l)0} & z^{(2l)1} \\
             z^{(2l+1)0} & z^{(2l+1)1} \\
             \vdots & \vdots \\
             z^{(2n-2)0} & z^{(2n-2)1} \\
             z^{(2n-1)0} & z^{(2n-1)1} \\
           \end{array}
         \right)
         &=& \left(
               \begin{array}{cc}
                 x_{0}-ix_{1} & -x_{2}+ix_{3} \\
                 x_{2}+ix_{3} & x_{0}+ix_{1} \\
                 \vdots & \vdots \\
                 x^{4l}-ix_{4l+1} & -x^{4l+2}+ix_{4l+3} \\
                 x^{4l+2}+ix_{4l+3}& x^{4l}+ix_{4l+1} \\
                 \vdots & \vdots \\
                 x^{4n-4}-ix_{4n-3} & -x^{4n-2}+ix_{4n-1} \\
                 -x^{4n-2}+ix_{4n-1} & x^{4n-4}+ix_{4n-3} \\
               \end{array}
             \right)
      \end{eqnarray}
 Pulling back to the quaternionic space $\mathbb{H}^{n}\cong \mathbb{R}^{4n}$ by the embedding (\ref{eq1}), we define on $\mathbb{R}^{4n}$ first-order differentiel operators $\nabla_{j\alpha}$ as following
 \begin{eqnarray}\label{eq2}
 \left(
    \begin{array}{cc}
      \nabla_{00} & \nabla_{01} \\
      \nabla_{10} & \nabla_{11} \\
       \vdots & \vdots \\
      \nabla_{(2l)0} & \nabla_{(2l)1} \\
      \nabla_{(2l+1)0} & \nabla_{(2l+1)1} \\
      \vdots & \vdots \\
      \nabla_{(2n-2)0} & \nabla_{(2n-2)1} \\
      \nabla_{(2n-1)0} & \nabla_{(2n-1)1} \\
    \end{array}
  \right):
  &=& \left(
             \begin{array}{cc}
               \partial_{x_{0}}+i\partial_{x_{1}} & -\partial_{x_{2}}-i\partial_{x_{3}} \\
               \partial_{x_{2}}-i\partial_{x_{3}} & \partial_{x_{0}}-i\partial_{x_{3}} \\
               \vdots & \vdots \\
               \partial_{x_{4l}}+i\partial_{x_{4l+1}} & -\partial_{x_{4l+2}}-i\partial_{x_{4l+3}} \\
               \partial_{x_{4l+2}}-i\partial_{x_{4l+3}} & \partial_{x_{4l}}-i\partial_{x_{4l+1}} \\
               \vdots & \vdots \\
               \partial_{x_{4n-4}}+i\partial_{x_{4n-3}} & -\partial_{x_{4n-2}}-i\partial_{x_{4n-1}} \\
               \partial_{x_{4n-2}}-i\partial_{x_{4n-1}} & \partial_{x_{4n-4}}-i\partial_{x_{4n-3}} \\
             \end{array}
           \right)
 \end{eqnarray}
  $z^{k\beta}$'s can be viewd as independent variables and $\nabla_{j\alpha}$'s are derivatives with respect to these variables. The operators $\nabla_{j\alpha}$'s play very important roles in the investigating of regular functions in several quaternionic variables.\\
  Let $\wedge^{2k}\mathbb{C}^{2n}$ be the complex exterior algebra generated by $\mathbb{C}^{2n}$, avec $0\leq k\leq n$.\\
  Fixons a basis $\{\omega^{0},\omega^{1},\ldots,\omega^{2n-1}\}$ of $\mathbb{C}^{2n}.$ Let $\Omega$ be a domain in $\mathbb{R}^{4n}.$ we define
   $d_{0},d_{1}:  C^{\infty}_{0}(\Omega,\wedge^{p}\mathbb{C}^{2n})\longrightarrow  C^{\infty}_{0}(\Omega,\wedge^{p+1}\mathbb{C}^{2n})$ by :
   $$d_{0}F=\sum_{k,I}\nabla_{k0}f_{I}\omega^{k}\wedge\omega^{I}$$ $$d_{1}F=\sum_{k,I}\nabla_{k1}f_{I}\omega^{k}\wedge\omega^{I}$$ $$ \triangle F=d_{0}d_{1}F$$
  for  $F=\sum_{I} f_{I}\omega^{I}\in C^{\infty}_{0}(\Omega,\wedge^{p}\mathbb{C}^{2n}),$ where the multi-index $I=(i_{1},\ldots,i_{p})$ et $\omega^{I}=\omega^{i_{1}}\wedge\ldots\wedge\omega^{i_{p}}.$
  The operators $d_{0}, d_{1}$ depend on the choice of coordinates $x_{j}$'s and the basis $\{\omega^{j}\}.$ It is known (cf.\cite{WW}) that the second operator $D$ in the 0-Cauchy-Fueter complex can be written as $DF:=\left(
                                                          \begin{array}{c}
                                                            d_{0}F \\
                                                            d_{1}F \\
                                                          \end{array}
                                                        \right).$\\
   Although $d_{0},d_{1}$  are not exterior differential, their behavior is similar to exterior differential: $d_{0}d_{1}=-d_{1}d_{0}$,  $d_{0}^{2}=d_{1}^{2}=0$; for $F\in C^{\infty}_{0}(\Omega,\wedge^{p}\mathbb{C}^{2n}),$ $G\in C^{\infty}_{0}(\Omega,\wedge^{q}\mathbb{C}^{2n}),$ we have  \begin{eqnarray}\label{eq3}
   d{\alpha}(F\wedge G)=d_{\alpha}F\wedge G+(-1)^{p}F\wedge d_{\alpha}G, \        \ \alpha=0,1, \ \ d_{0}\Delta=d_{1}\Delta=0
   \end{eqnarray}
   We say $F$ is closed if $d_{0}F=d_{1}F=0,$ ie, $DF=0.$ For $u_{1},u_{2},\ldots,u_{n}\in C^{2},$ $\triangle u_{1}\wedge\ldots\wedge\triangle u_{k}$ is closed, $k=1,\ldots,n.$
   Moreover, it follows easily from (\ref{eq3}) that $ \triangle u_{1}\wedge\ldots\wedge\triangle u_{n}$ satisfies the following remarkable identities:
   $$\triangle u_{1}\wedge\ldots\wedge\triangle u_{n}=d_{0}(d_{1}u_{1}\wedge\triangle u_{2}\wedge\ldots\wedge\triangle u_{n})=-d_{1}(d_{0}u_{1}\wedge\triangle u_{2}\wedge\ldots\wedge\triangle u_{n})$$
$$=d_{0}d_{1}(u_{1}\wedge\triangle u_{2}\wedge\ldots\wedge\triangle u_{n})=\triangle(u_{1}\wedge\triangle u_{2}\wedge\ldots\wedge\triangle u_{n}).$$
To write down the explicit expression, we define for a function $u\in C^{2},$
$$\Delta_{ij}u:= \frac{1}{2}(\nabla_{i0}\nabla_{j1}u-\nabla_{i1}\nabla_{j0}u).$$
$2\Delta_{ij}$ is the determinent of $(2\times2)$- submatrix of i-th romws in (\ref{eq2}). Then we can write
$$ \Delta u=\sum_{i,j=0}^{2n-1}\Delta_{ij}u\omega^{i}\wedge\omega^{j},$$
and for $u_{1},...,u_{n}\in C^{2},$
$$\triangle u_{1}\wedge\ldots\wedge\triangle u_{n}=\sum_{i_{1},j_{1},...}\Delta_{i_{1},j_{1}}u_{1}...\Delta_{i_{n},j_{n}}u_{n}\omega^{i_{1}}\wedge\omega^{j_{1}}\wedge...\wedge\omega^{i_{n}}\wedge\omega^{j_{n}}$$
$$ = \sum_{i_{1},j_{1},...}\delta_{01..(2n-1)}^{i_{1}j_{1}..i_{n}j_{n}}\Delta_{i_{1},j_{1}}u_{1}...\Delta_{i_{n},j_{n}}u_{n}\Omega_{2n},$$
where $\Omega_{2n}$ is defined as $$\Omega_{2n}:=\omega^{0}\wedge\omega^{1}\wedge...\wedge\omega^{2n-2}\wedge\omega^{2n-1},$$
and $\delta_{01..(2n-1)}^{i_{1}j_{1}..i_{n}j_{n}}=$ the sign of the permutation from $(i_{1},j_{1}...,i_{n},j_{n})$ to $(0,1,...,2n-1),$ if
$\{i_{1},j_{1}...,i_{n},j_{n}\}=\{0,1,...,2n-1\};$ otherwise,$\delta_{01..(2n-1)}^{i_{1}j_{1}..i_{n}j_{n}}=0.$ Note that $ \triangle u_{1}\wedge\ldots\wedge\triangle u_{n}$ is symmetric with respect to the permutation of $u_{1},...,u_{n}.$ In particulier, when $u_{1}=...=u_{n}=u,$ $ \triangle u_{1}\wedge\ldots\wedge\triangle u_{n}$ coincides with $(\Delta u)^{n}=\wedge^{n}\Delta u.$\\
We denote by $\Delta_{n}( u_{1},..., u_{n})$ the coefficient of the form $ \triangle u_{1}\wedge\ldots\wedge\triangle u_{n},$ ie, $\triangle u_{1}\wedge\ldots\wedge\triangle u_{n}=\Delta_{n}( u_{1},..., u_{n})\Omega_{2n}.$ Then $\Delta_{n}( u_{1},..., u_{n})$ coincides with the mixed Monge-Ampère operator $\det( u_{1},..., u_{n})$ while $\Delta_{n}u$ coincides with the quaternionic Monge-Ampère operator $\det(u),$ we gave an elementary and simpler proof in Appendix A of \cite{WW}.\\
Denote by $\wedge_{\mathbb{R}}^{2k}\mathbb{C}^{2n}$ the subspace of all real elements in $\wedge^{2k}\mathbb{C}^{2n}$ following Alesker \cite{A2}. They are counterparts of $(k,k)-$ forms in several complex variables. In the space $\wedge_{\mathbb{R}}^{2k}\mathbb{C}^{2n}$ Wan and Wang defined convex cones $\wedge_{\mathbb{R}^{+}}^{2k}\mathbb{C}^{2n}$ and $SP^{2k}\mathbb{C}^{2n}$ of positive and strongly positive elements, respectively. Denoted by $\mathcal{D}^{2k}(\Omega)$ the set of all $C_{0}^{\infty}(\Omega)$ functions valued in $\wedge^{2k}\mathbb{C}^{2n}.$ $\eta\in\mathcal{D}^{2k}(\Omega)$ is called a positive form (respectively, strongly positive form) if for any $q\in\Omega,$ $\eta(q)$ is positive (respectively, strongly positive) element. Such forms are the same as the sections of certain line bundle introduced by Alesker \cite{A2} when the manifold is flat. We proved that for $u\in PSH\cap C^{2}(\Omega),$ $\Delta u$ is a closed strongly positive $2$-form.\\
An element of the dual space $(\mathcal{D}^{2n-p}(\Omega))'$ is called a $p$-current. Denoted by $\mathcal{D}_{0}^{p}(\Omega)$ the set of all $C_{0}(\Omega)$ functions valued in $\wedge^{p}\mathbb{C}^{2n}.$ The elements of the dual space $(\mathcal{D}_{0}^{2n-p}(\Omega))'$ are called $p$-currents of order zero. Obviously, the $2n$-currents are just the distributions on $\Omega,$ whereas the $2n$-currents of order zero are Radon measures on $\Omega.$\\
A 2k-current $T$ is said to be positive if we have $T(\eta)\geq0$ for any strongly positive form $\eta\in\mathcal{D}^{2n-2k}(\Omega).$ \\
Although a $2n$-form is not an authentic differentiel form and we cannot integrate it, we can define
$$\int_{\Omega}F:=\int_{\Omega}fdV,$$ if we write $F=f\Omega_{2n}\in L^{1}(\Omega,\wedge^{2n}\mathbb{C}^{2n}),$ where $dV$ is the Lebesgue measure.\\
In particular, if $F$ is positive $2n$-form, then $\int_{\Omega}F\geq0.$ For a $2n$-current $F=\mu\Omega_{2n}$ with coefficient to be measure $\mu,$ define
$$\int_{\Omega}F:=\int_{\Omega}\mu.$$ Any positive $2k$-current $T$ on $\Omega$ has measure coeffucients (i.e.is of order zero)(cf \cite{WW} for more details). For a positive $2k$-current $T$ and a strongly positive test form $\varphi,$ we can write $T\wedge\varphi=\mu\Omega_{2n}$ for some Radon measure $\mu.$ We have
$$ T(\varphi)=\int_{\Omega}T\wedge\varphi.$$
Now for the $p$-current $F,$ we define $d_{\alpha}F$ as $(d_{\alpha}F)(\eta):=-F(d_{\alpha}\eta),$ $\alpha=0,1,$ for any test $(2n-p-1)$-form $\eta.$ We say a current $F$ is closed if $d_{0}F=d_{1}F=0,$ i.e, $DF=0.$ Wan and Wang proved $\Delta u$ is closed positive $2$-current for any $u\in PSH(\Omega).$}
\begin{lem}\label{200}
(Stokes-type formula, \cite[Lemma 3.2]{WW}).\\
Assume that $T$ is a smooth $(2n-1)$-form in $\Omega,$ and $h$ is a smooth function with $h=0$ on $\partial\Omega.$ Then we have
$$\int_{\Omega}hd_{\alpha}T=-\int_{\Omega}d_{\alpha}h\wedge T,\ \  \alpha=0,1,$$
\end{lem}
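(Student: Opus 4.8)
The plan is to read the identity as ordinary integration by parts for the first-order operator $d_{\alpha}$, which is assembled from the \emph{constant-coefficient} derivations $\nabla_{k\alpha}$, and thereby to reduce it to the classical Gauss--Green theorem on the bounded domain $\Omega$. First I would apply the Leibniz rule recorded in (\ref{eq3}) with $F=h$ a $0$-form and $G=T$: since $(-1)^{0}=1$ and wedging by the scalar $h$ is just multiplication, this gives
$$d_{\alpha}(hT)=d_{\alpha}h\wedge T+h\,d_{\alpha}T,\qquad \alpha=0,1,$$
an identity of $2n$-forms. Writing each $2n$-form as a multiple of $\Omega_{2n}$ and using the definition $\int_{\Omega}F:=\int_{\Omega}f\,dV$ for $F=f\Omega_{2n}$ (all integrands being integrable by the smoothness hypotheses on $T$ and $h$), it suffices to prove that $\int_{\Omega}d_{\alpha}(hT)=0$; granting this, $0=\int_{\Omega}d_{\alpha}h\wedge T+\int_{\Omega}h\,d_{\alpha}T$, which is exactly the asserted formula after transposing one term.

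To prove $\int_{\Omega}d_{\alpha}(hT)=0$ I would unwind the definition of $d_{\alpha}$ in coordinates. Write $T=\sum_{|I|=2n-1}t_{I}\,\omega^{I}$; then by definition
$$d_{\alpha}(hT)=\sum_{k,I}\nabla_{k\alpha}(h\,t_{I})\,\omega^{k}\wedge\omega^{I},$$
and only the pairs $(k,I)$ with $\{k\}\cup I=\{0,\dots,2n-1\}$ survive, each contributing $\pm\,\nabla_{k\alpha}(h\,t_{I})\,\Omega_{2n}$ with the sign $\delta^{kI}_{01\cdots(2n-1)}$. Now by (\ref{eq2}) each $\nabla_{k\alpha}$ is a fixed $\C$-linear combination of the Euclidean partial derivatives $\partial_{x_{0}},\dots,\partial_{x_{4n-1}}$ with constant coefficients. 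Collecting terms, the $\Omega_{2n}$-coefficient of $d_{\alpha}(hT)$ therefore takes the form $\sum_{m=0}^{4n-1}\partial_{x_{m}}\!\big(h\,G_{m}\big)$, where each $G_{m}$ is a finite $\C$-linear combination of the coefficients $t_{I}$ with constant coefficients, hence smooth. Since $h=0$ on $\partial\Omega$, every $h\,G_{m}$ vanishes on $\partial\Omega$, so the classical divergence theorem yields
$$\int_{\Omega}\sum_{m=0}^{4n-1}\partial_{x_{m}}(h\,G_{m})\,dV=\int_{\partial\Omega}\sum_{m=0}^{4n-1}h\,G_{m}\,\nu_{m}\,dS=0,$$
where $\nu$ is the outward unit normal. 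Thus $\int_{\Omega}d_{\alpha}(hT)=0$, and combined with the previous paragraph this completes the proof.

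The only step that genuinely needs care — and the one I expect to be the main (if modest) obstacle — is the bookkeeping in the second paragraph: one must check that the permutation signs $\delta^{kI}_{01\cdots(2n-1)}$ together with the constant coefficients of the $\nabla_{k\alpha}$ reorganize the sum $\sum_{k,I}\pm\,\nabla_{k\alpha}(h\,t_{I})$ into a bona fide Euclidean divergence $\sum_{m}\partial_{x_{m}}(h\,G_{m})$, so that the scalar Gauss--Green theorem applies verbatim. Everything else is formal: the Leibniz rule is quoted from (\ref{eq3}), and the passage from $2n$-forms to numbers is the definition of $\int_{\Omega}(\cdot)$ recalled in the preliminaries. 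If one prefers to avoid indices, the same observation can be made structurally: since $d_{\alpha}$ has constant coefficients, $d_{\alpha}(hT)$ equals $d_{\alpha}h\wedge T$ plus the exterior-type derivative of the $2n$-current $hT$ whose $\Omega_{2n}$-component is the divergence of a vector field proportional to $h$; but the explicit coordinate computation above is the most transparent route.
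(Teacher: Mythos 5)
The paper does not prove this lemma at all: it is quoted verbatim from \cite[Lemma 3.2]{WW}, so there is no in-paper argument to compare against. Your proof is correct and is essentially the standard proof of the cited result — the Leibniz rule (\ref{eq3}) with a $0$-form reduces everything to $\int_{\Omega}d_{\alpha}(hT)=0$, and because each $\nabla_{k\alpha}$ is a constant-coefficient combination of the $\partial_{x_{m}}$, the $\Omega_{2n}$-coefficient of $d_{\alpha}(hT)$ is a genuine Euclidean divergence $\sum_{m}\partial_{x_{m}}(hG_{m})$ with $hG_{m}=0$ on $\partial\Omega$, to which Gauss--Green applies. The only point to make explicit is the regularity needed for that last step: $h$ and the coefficients of $T$ must be $C^{1}$ up to $\partial\Omega$ (or one must approximate by compactly supported data) and $\partial\Omega$ must be regular enough for the divergence theorem — hypotheses that are implicit in the statement and satisfied in the smoothly bounded setting of the paper.
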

Bedford-Taylor theory \cite{BT} in complex analysis can be generalized to the quaternionic case. Let $u$ be a locally bounded PSH function and let $T$ be a closed positive $2k$-current. Define $$\Delta u\wedge T:=\Delta(uT),$$ i.e., $(\Delta u\wedge T)(\eta):=uT(\Delta\eta)$ for test form $\eta.$ $\Delta u\wedge T$ is also a closed positive current. Inductively, for $u_{1},\ldots,u_{p}\in PSH\cap L_{loc}^{\infty}(\Omega),$ Wan and Wang showed that
$$\Delta u_{1}\wedge\ldots\wedge\Delta u_{p}:=\Delta(u_{1}\Delta u_{2}\wedge\ldots\wedge\Delta u_{p})$$ is closed positive $2p$-curent. In particular, for $u_{1},\ldots,u_{n}\in PSH\cap L_{loc}^{\infty}(\Omega),$ $\Delta u_{1}\wedge\ldots\wedge\Delta u_{n}=\mu\Omega_{2n}$ for a well-defined positive Radon measure $\mu.$\\
For any test $(2n-2p)$-form $\psi$ on $\Omega,$ we have $$\int_{\Omega}\Delta u_{1}\wedge\ldots\wedge\Delta u_{p}\wedge\psi=\int_{\Omega}u_{1}\Delta u_{2}\wedge\ldots\wedge\Delta u_{p}\wedge\Delta\psi,$$
where $u_{1},\ldots,u_{p}\in PSH\cap L_{loc}^{\infty}(\Omega).$\\
Given a bounded plurisubharmonic function $u$ one can define the quaternionic Monge-Amp\`{e}re measure
$$ (\Delta u)^{n}=\Delta u\wedge\Delta u\wedge\ldots\wedge\Delta u.$$
This is a nonnegative Borel measure.\\
The following capacity was introduced in \cite{WZ} for Borel sets $E\subset\Omega$:
$$ cap(E,\Omega)=\sup\{\int_{E}(\Delta u)^{n}: u\in PSH(\Omega), -1\leq u<0\}.$$
It is closely related to the relative extremal function of the given compact set $K$:
$$ u_{K}(q)=\sup\{u(q): u\in PSH\cap L^{\infty}(\Omega), u<0 \; \hbox{in} \; \Omega, u\leq-1 \; \hbox{on} \; K\},$$
Its upper semicontinuous regalization $u^{*}_{K}(q):= \overline{\lim}_{\zeta\longrightarrow q}u_{K}(\zeta)$ is a plurisubharmonic function and by \cite{WK}we have $$ cap(K,\Omega)=\int_{K}(\Delta u^{*}_{K})^{n}=\int_{\Omega}(\Delta u^{*}_{K})^{n}.$$

W.Wang introduced in \cite{W} the operator $$ \Delta_{a}v:=\frac{1}{2}Re\sum_{j,k=1}^{n} a_{kj}\frac{\partial^{2}v}{\partial \overline{q_{j}}\partial q_{k}}=\frac{1}{2}Re Tr(a(\frac{\partial^{2}v}{\partial \overline{q_{j}}\partial q_{k}}))$$
for $a=(a_{jk})\in\mathcal{H}_{n},$ with $\mathcal{H}_{n}$ the set of all positive quaternionic  hyperhermitian $(n\times n)$ matrices, and a $C^{2}$ real function $v.$ This is an elliptic operator of constant coefficients. This operator is the quaternionic counterpart of complex K\"{a}hler operator, which plays key role in the viscosity approach for the complex case. For more details see \cite{W} and \cite{WW1}.
With the help of this operator, we can prove this following result, by applying the same ideas from the proof of proposition 3.2 in \cite{WW1} and Proposition 5.9 in \cite{GZ17}. \\
We set $$\mathcal{H}'_{n}:=\{a\in\mathcal{H}_{n}/ \det a\geq1\}.$$
\begin{pro}\label{pro1}
Let $u\in PSH(\Omega)\cap L_{loc}^{\infty}(\Omega)$ and $0\leq f\in C(\overline{\Omega}).$ The following conditions are equivalent: \begin{enumerate}
  \item $\Delta_{a}u\geq a_{n}f^{\frac{1}{n}}$ for all $a\in\mathcal{H}'_{n}.$
  \item $(\Delta u)^{n}\geq fdV$ in $\Omega$
\end{enumerate}
where $a_{n}=\frac{n}{2(n!)^{\frac{1}{n}}}.$
\end{pro}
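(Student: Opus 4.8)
The plan is to reduce the equivalence to the case $u\in C^{2}$, where it is pure linear algebra, and then to pass to the general case by regularisation, using Bedford--Taylor theory for one implication and a comparison argument for the other. First I would treat $u\in PSH(\Omega)\cap C^{2}(\Omega)$. Write $H(q)=\bigl(\tfrac{\partial^{2}u}{\partial q_{j}\partial\overline q_{k}}(q)\bigr)$ for the quaternionic Hessian, which is hyperhermitian and $\ge0$ because $u\in PSH$, and recall $\Delta_{a}u=\tfrac12\mathrm{Re}\,\mathrm{Tr}(aH)$ and $\det H=\det(u)$. The elementary inequality $\mathrm{Re}\,\mathrm{Tr}(AB)\ge n(\det A)^{1/n}(\det B)^{1/n}$ for positive hyperhermitian $A,B$ — obtained from the classical Hermitian arithmetic--geometric mean inequality applied to the $2n\times2n$ complex Hermitian realisation $\tau$, using $\det_{\mathbb{C}}\tau(A)=(\det A)^{2}$ — together with $\det a\ge1$ gives
$$\Delta_{a}u=\tfrac12\mathrm{Re}\,\mathrm{Tr}(aH)\ \ge\ \tfrac n2\,(\det a)^{1/n}(\det H)^{1/n}\ \ge\ \tfrac n2\,\det(u)^{1/n},$$
while testing with $a=(\det H)^{1/n}H^{-1}\in\mathcal{H}'_n$ (when $H>0$; otherwise add $\epsilon|q|^{2}$ and let $\epsilon\to0$) shows $\inf_{a\in\mathcal{H}'_n}\Delta_{a}u=\tfrac n2\det(u)^{1/n}$ pointwise. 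Since the normalisation linking $(\Delta u)^{n}$ to the Moore determinant $\det(u)$ is exactly the one producing the constant $a_{n}=\tfrac{n}{2(n!)^{1/n}}$, this yields the pointwise equivalence of (1) and (2) for $C^{2}$ functions; for general $u$ condition (1) is read as an inequality of Radon measures.

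Next, for $u\in PSH(\Omega)\cap L^{\infty}_{\mathrm{loc}}(\Omega)$ I would use $u_{\epsilon}=u\ast\chi_{\epsilon}$, smooth, plurisubharmonic on $\Omega_{\epsilon}$, and decreasing to $u$. Assume (1). As the operators $\Delta_{a}$ are linear with constant coefficients, $\Delta_{a}u_{\epsilon}=(\Delta_{a}u)\ast\chi_{\epsilon}\ge a_{n}(f^{1/n}\ast\chi_{\epsilon})$ as continuous functions, so the $C^{2}$ equivalence applied to $u_{\epsilon}$ gives $(\Delta u_{\epsilon})^{n}\ge(f^{1/n}\ast\chi_{\epsilon})^{n}\,dV$ on $\Omega_{\epsilon}$. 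Letting $\epsilon\to0$, the Bedford--Taylor weak continuity of $(\Delta\,\cdot\,)^{n}$ along decreasing sequences of bounded plurisubharmonic functions, together with the local uniform convergence $(f^{1/n}\ast\chi_{\epsilon})^{n}\to f$ (here $f\in C(\overline\Omega)$ is used), yields $(\Delta u)^{n}\ge f\,dV$, which is (2).

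The reverse implication $(2)\Rightarrow(1)$ is, I expect, the main obstacle, since convolution does not commute with the nonlinear operator $(\Delta\,\cdot\,)^{n}$ and the above limiting scheme breaks down. Here I would follow Proposition 3.2 of \cite{WW1} and Proposition 5.9 of \cite{GZ17} and argue by contradiction: if (1) fails for some $a\in\mathcal{H}'_n$, then on a small ball $B\Subset\Omega$ and for some $\delta>0$ one has $\Delta_{a}u\le a_{n}(f^{1/n}-\delta)\,dV$ on $B$; exploiting the ellipticity of $\Delta_{a}$ one constructs a $C^{2}$ (quadratic-type) local barrier $\phi$ on $B$ with $(\Delta\phi)^{n}\le(f-\delta')\,dV$, $\phi\ge u$ on $\partial B$, and $\phi(q_{0})<u(q_{0})$ at some $q_{0}\in B$; the comparison principle for the quaternionic Monge--Amp\`ere operator, available from the quaternionic pluripotential theory (\cite{WW,WZ,WK}), then forces $\phi\ge u$ on all of $B$, a contradiction. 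The substance of this step — and the reason continuity of $f$ is required — is the quaternionic counterpart of the Eyssidieux--Guedj--Zeriahi identification of pluripotential and viscosity subsolutions, which is precisely what is imported from \cite{WW1} and \cite{GZ17}.
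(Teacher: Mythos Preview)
Your treatment of the $C^{2}$ case and of the implication $(1)\Rightarrow(2)$ matches the paper: the pointwise identity $\inf_{a\in\mathcal{H}'_n}\Delta_a u=\tfrac{n}{2}(\det H_u)^{1/n}$ is exactly Lemma~3.4 of \cite{WW1}, and the passage to general $u$ via $u_\epsilon=u\ast\chi_\epsilon$, linearity of $\Delta_a$, and Bedford--Taylor convergence is what the paper does.

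For $(2)\Rightarrow(1)$, however, your sketch is not what the cited references do, and as written it has a gap. The step ``if (1) fails for some $a$ then $\Delta_a u\le a_n(f^{1/n}-\delta)\,dV$ on some ball $B$'' is illegitimate: failure of the measure inequality $\Delta_a u\ge g\,dV$ only says that $\Delta_a u-g\,dV$ has nontrivial negative part; since $\Delta_a u$ is a (nonnegative) Radon measure that may well have singular part, you cannot deduce a uniform upper bound $\Delta_a u\le(g-\delta)\,dV$ on any open set. With that localisation gone, your barrier $\phi$ satisfying $\phi\ge u$ on $\partial B$ and $\phi(q_0)<u(q_0)$ has no construction.

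The paper (and \cite{WW1}, \cite{GZ17}) runs the contradiction in the \emph{other} direction. One first shows that (2) forces $u$ to be a viscosity subsolution of $(\Delta\,\cdot\,)^n=f$: given any $\varphi\in C^2$ with $u\le\varphi$ near $q_0$ and $u(q_0)=\varphi(q_0)$, suppose $\det H_\varphi(q_0)<f(q_0)$; replace $\varphi$ by $\varphi_\epsilon=\varphi+\epsilon|q-q_0|^2$ so that $0<(\Delta\varphi_\epsilon)^n<f\,dV\le(\Delta u)^n$ on a small ball $B$, observe $\varphi_\epsilon-\delta\ge u$ near $\partial B$, and apply the pluripotential comparison principle to get $\varphi_\epsilon-\delta\ge u$ on $B$, contradicting $\varphi_\epsilon(q_0)=u(q_0)$. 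The $C^2$ equivalence at $q_0$ then gives $\Delta_a\varphi(q_0)\ge a_n f(q_0)^{1/n}$ for every upper test function $\varphi$ and every $a\in\mathcal{H}'_n$. The crucial remaining step --- passing from this viscosity inequality to the distributional inequality (1) --- uses the \emph{linearity} of $\Delta_a$: for $f>0$ smooth, solve $\Delta_a g=a_n f^{1/n}$, note $u-g$ is $\Delta_a$-subharmonic in the viscosity sense, hence (H\"ormander, \cite[Prop.~3.2.10]{H}) in the distributional sense, so $\Delta_a u\ge a_n f^{1/n}$. General continuous $f\ge0$ is then reached by two further approximations: smooth positive $W\nearrow f$ from below, and $u+\epsilon\|q\|^2$ to reduce $f\ge0$ to $f>0$. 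This linear-theory step and the layered approximation in $f$ are absent from your outline and are the substance of the implication.
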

\begin{proof}
$2\Longrightarrow 1.$ Fix $q_{0}\in\Omega$ and $\varphi\in C^{2}$ in neighborhood $B\Subset\Omega$ of $q_{0}$ such that $u\leq\varphi$ in $B$ and $u(q_{0})=\varphi(q_{0}).$ We will prove that $(\Delta \varphi)^{n}_{q_{0}}\geq f(q_{0})dV.$ Suppose by contradiction that $(\Delta \varphi)^{n}_{q_{0}}< f(q_{0})dV,$ by choosing $\epsilon>0$ small enough and letting $\varphi_{\epsilon}:=\varphi+\epsilon|q-q_{0}|^{2},$ we have $0<(\Delta\varphi_{\epsilon})^{n}<fdV$ in $B$ by the continuity of $f.$ It follows from the proof of proposition 3.1 in \cite{WW1} that $\varphi_{\epsilon}$ is plurisubharmonic in $B.$ Now for $\delta>0$ small enough, we have $\varphi_{\epsilon}-\delta\geq u$ near $\partial B$ and $(\Delta \varphi_{\epsilon})^{n}\leq (\Delta u)^{n}.$ The pluripotential comparaison principle yields $\varphi_{\epsilon}-\delta\geq u$ on $B.$ But $\varphi_{\epsilon}(q_{0})=\varphi(q_{0})=u(q_{0}),$ a contradiction. Hence $(\Delta\varphi)^{n}_{q_{0}}\geq f(q_{0})dV.$ Then the hyperhermitian matrix $Q=[\frac{\partial^{2}\varphi}{\partial \overline{q_{j}}\partial q_{k}}(q_{0}))]$ satisfies $\det(Q)\geq f$ at $q_{0}.$ \\ By lemma 3.4 in \cite{WW1}, we have $$(n!\det Q)^{\frac{1}{n}}=(n!)^{\frac{1}{n}}\frac{2}{n}\inf_{a}\Delta_{a}\varphi\geq f^{\frac{1}{n}}$$
for every $a\in\mathcal{H}'_{n}.$ Hence $\Delta_{a}\varphi\geq a_{n}f^{\frac{1}{n}}.$ \\
If $f>0$ is smooth function, there exists $g\in C^{\infty}(\overline{\Omega})$ such that $\Delta_{a}g=a_{n}f^{\frac{1}{n}}.$ Thus $h=u-g$ is subharmonic respect to $\Delta_{a}$ by Proposition 3.2.10 in \cite{H}, and satisfies $\Delta_{a}h\geq0$ in the sense of distributions. Hence $\Delta_{a}u\geq a_{n}f^{\frac{1}{n}}.$\\
If $f>0$ is only continuous, we observe that $$f=\sup\{W, W\in C^{\infty}(\overline{\Omega}), f\geq W>0\}$$
Since $(\Delta u)^{n}\geq fdV,$ we get $(\Delta u)^{n}\geq WdV.$ By the proof above, we can see that $(\Delta_{a}u)\geq a_{n}W^{\frac{1}{n}},$ therefore $\Delta_{a}u\geq a_{n}f^{\frac{1}{n}}.$\\
Now let $f\geq0$ be continuous. We observe that $u_{\epsilon}(q)=u(q)+\epsilon\|q\|^{2}$ satisfies $(\Delta u_{\epsilon})^{n}\geq(f+8^{n}\epsilon^{n})dV,$ since $(\Delta\|q\|^{2})^{n}=8^{n}\beta^{n}_{n}.$ By the last part above, we have $\Delta_{a}u_{\epsilon}\geq a_{n}(f+8^{n}\epsilon^{n})^{\frac{1}{n}}.$ The result follows by letting $\epsilon\longrightarrow0.$\\
$1\Longrightarrow 2.$ Suppose that $u\in C^{2}(\Omega)$ then by lemma 3.4 in \cite{WW1}, we have $\Delta_{a}u\geq a_{n}f^{\frac{1}{n}}$ is equivalent to $(\det(\frac{\partial^{2}u}{\partial \overline{q_{j}}\partial q_{k}}))^{\frac{1}{n}}\geq f^{\frac{1}{n}},$ which it itself equivalent to $(\Delta u)^{n}\geq fdV$ in $\Omega.$\\
If $u$ is not smooth, we consider the standart regularisation $u_{\epsilon}$ of $u$ by convolution with a smoothing kernel. The function $u_{\epsilon}:=u\ast \chi_{\epsilon}$ are plurisubharmonic in $\Omega_{\epsilon}$ and decrease to $u$ as $\epsilon$ decrease to $0.$ We have $\Delta_{a}u_{\epsilon}\geq(a_{n}f^{\frac{1}{n}})_{\epsilon},$ since $u_{\epsilon}$ is smooth, we have $$ (\Delta u_{\epsilon})^{n}\geq ((f^{\frac{1}{n}})_{\epsilon})^{n}dV.$$ Letting $\epsilon\longrightarrow0,$ and applying the convergence theorem for the quaternionic Monge-Amp\`{e}re operator, we get $(\Delta u)^{n}\geq fdV$ in $\Omega.$
\end{proof}
Consider $$\mathcal{U}=\mathcal{U}(\Omega,\psi,f)=\{u\in PSH\cap C(\overline{\Omega}), u_{/\partial\Omega}\leq\psi \ \ and \ \ \Delta_{a}u\geq a_{n}f^{\frac{1}{n}}, \forall a\in \mathcal{H}'_{n}\}$$
It is easy to show that $\mathcal{U}$ is non empty. Then by proposition \ref{pro1}, we can describe the solution as the following
$$ U=\sup\{u\in\mathcal{U}(\Omega,\psi,f)\}.$$
\section{The Modulus of continuity of The solution}\label{mod}
With the help of \cite{C}. we can use in this part the modulus of continuity of the solution to Dirichlet problem for quaternionic Monge-Amp\`{e}re equation (\ref{DP12}).\\
Recall that a real function $\theta$ on $[0,r],$ $0<r<\infty$ is called a modulus of continuity if $\theta$ is continuous, subadditive, nondecreasing and $\theta(0)=0.$ In general, $\theta$ fails to be concave, we denote $\overline{\theta}$ to be the minimal concave majorant of $\theta.$ We denote $\theta_{\varphi}$ the optimal modulus of continuity of the continuous function $\varphi$ which is defined by $$ \theta_{\varphi}(t)=\sup_{|x-y|\leq t}|\varphi(x)-\varphi(y)|.$$
Now, we will prove the following result which is the one of the useful properties of $\overline{\theta}$
\begin{lem}\label{lem1}
Let $\theta$ be a modulus of continuity on $[0,r]$ and $\overline{\theta}$ be the minimal concave majorant of $\theta.$ Then
$\theta(\lambda t)<\overline{\theta}(\lambda t)<(1+\lambda)\theta(t)$ for any $t>0$ and $\lambda>0.$
\end{lem}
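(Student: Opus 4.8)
The plan is to prove the two inequalities separately. The first, $\theta(\lambda t)\le\overline{\theta}(\lambda t)$, is immediate from the very definition of $\overline{\theta}$ as a majorant of $\theta$, so the whole content lies in the upper bound $\overline{\theta}(\lambda t)\le(1+\lambda)\theta(t)$. (I will prove the inequalities in the non-strict form $\le$; this is the sharp statement, since equality can occur — e.g.\ the first inequality is an equality whenever $\theta$ is already concave.) The strategy for the upper bound is to exhibit an explicit \emph{affine} (hence concave) function which dominates $\theta$ on all of $[0,r]$ and whose value at $\lambda t$ is exactly $(1+\lambda)\theta(t)$; minimality of $\overline{\theta}$ then forces $\overline{\theta}$ to lie below it at that point.

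First I would establish the elementary sub-linear estimate: for every $s,t\in(0,r]$ one has $\theta(s)\le(1+s/t)\,\theta(t)$. Writing $s=nt+\rho$ with $n=\lfloor s/t\rfloor\in\N$ and $0\le\rho<t$, all the partial sums $t,2t,\dots,nt$ and $\rho$ lie in $[0,r]$ because $nt\le s\le r$; subadditivity applied $n$ times gives $\theta(nt)\le n\,\theta(t)$, and one further application together with monotonicity ($\rho<t$, hence $\theta(\rho)\le\theta(t)$) yields $\theta(s)=\theta(nt+\rho)\le\theta(nt)+\theta(\rho)\le (n+1)\theta(t)\le(1+s/t)\theta(t)$. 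Incidentally this already proves $\theta(\lambda t)\le(1+\lambda)\theta(t)$ for $\theta$ itself.

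Now fix $t\in(0,r]$ and set $L(s):=\theta(t)+\dfrac{\theta(t)}{t}\,s$ for $s\in[0,r]$. This function is affine, in particular concave, and by the previous paragraph $L(s)=(1+s/t)\theta(t)\ge\theta(s)$ for all $s\in(0,r]$, while at $s=0$ one has $L(0)=\theta(t)\ge0=\theta(0)$. Thus $L$ is a concave majorant of $\theta$ on $[0,r]$, and since $\overline{\theta}$ is the pointwise infimum of all such majorants, $\overline{\theta}(s)\le L(s)$ for every $s\in[0,r]$. Evaluating at $s=\lambda t$ (assumed, as in the statement, to lie in $[0,r]$) gives $\overline{\theta}(\lambda t)\le L(\lambda t)=\theta(t)+\lambda\,\theta(t)=(1+\lambda)\theta(t)$, which is the desired bound.

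There is no genuine obstacle here; the only points demanding a little care are (i) checking, in the derivation of the sub-linear estimate, that every argument to which $\theta$ is applied stays inside the domain $[0,r]$, and (ii) invoking the correct characterization of $\overline{\theta}$ — namely that it is the smallest concave function on $[0,r]$ dominating $\theta$, hence lies below the particular concave majorant $L$. Everything else is a direct computation.
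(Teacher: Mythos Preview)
Your argument is correct and is precisely the standard proof of this classical fact about moduli of continuity. The paper does not give its own proof here; it simply refers to Lemma~3.1 in \cite{C}, where the same subadditivity estimate $\theta(s)\le(1+s/t)\theta(t)$ followed by comparison with an affine majorant is carried out. Your remark that the inequalities should be non-strict is also well taken: the first inequality is an equality whenever $\theta$ is already concave, so the strict ``$<$'' in the statement is a minor slip.
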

\begin{proof}
The same proof of Lemma 3.1 in \cite{C}.
\end{proof}
In the following result, we establish a barrier to the problem (\ref{DP12}) and give an estimate of its modulus of continuity, which will be used in the proof of Theorem \ref{thm1}
\begin{pro}\label{pro2}
Let $\Omega\subset\mathbb{H}^{n}$ be a bounded strongly pseudoconvex domain with smooth boundary, assume that $\theta_{\psi}$ is the modulus of continuity of $\psi\in C(\partial\Omega)$ and $0\leq f\in C(\overline{\Omega}).$ Then there exists a subsolution $u\in\mathcal{U}(\Omega,\psi,f)$ such that $u=\psi$ on $\partial\Omega$ and the modulus of continuity of $u$ satisfies the following inequality $$\theta_{u}(t)\leq\eta\max\{\theta_{\psi}(t^{\frac{1}{2}}),t^{\frac{1}{2}}\},$$ where $\eta=\lambda(1+a_{n}\|f\|^{\frac{1}{n}}_{L^{\infty}(\overline{\Omega})})$ and $\lambda\geq1$ is a constant depending on $\Omega.$
\end{pro}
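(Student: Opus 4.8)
The plan is to adapt the classical Walsh-type construction of a continuous subsolution with prescribed boundary values, using the operator $\Delta_a$ and Proposition~\ref{pro1} to translate the quaternionic Monge-Amp\`ere inequality into the family of linear inequalities $\Delta_a u \geq a_n f^{1/n}$. First I would fix, for each boundary point $\zeta \in \partial\Omega$, a local holomorphic-type barrier coming from strong pseudoconvexity: since $\Omega$ admits a strictly plurisubharmonic defining function $\varphi$ with $\varphi < 0$ in $\Omega$, $\varphi = 0$ and $d\varphi \neq 0$ on $\partial\Omega$, one gets a uniform two-sided quadratic control $-C_2\|q-\zeta\|^2 \leq \varphi(q) + \text{(linear)} \leq -C_1\|q-\zeta\|^2$ near $\zeta$. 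Together with the $C^{1,1}$ (or here merely $C(\partial\Omega)$ with modulus $\theta_\psi$) regularity of $\psi$, extended off the boundary, this lets me build for each $\zeta$ a function $v_\zeta \in PSH \cap C(\overline\Omega)$ with $v_\zeta(\zeta) = \psi(\zeta)$, $v_\zeta \leq \psi$ on $\partial\Omega$, and $\Delta_a v_\zeta \geq a_n\|f\|_{L^\infty}^{1/n}$ for all $a \in \mathcal H'_n$, of the schematic form $v_\zeta(q) = \psi(\zeta) - M\,\theta_\psi(\|q-\zeta\|^{1/2}) + A\varphi(q)$ with $M, A$ chosen (depending only on $\Omega$ and $a_n\|f\|^{1/n}_{L^\infty}$) large enough that the strict plurisubharmonicity of $A\varphi$ dominates. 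Then set $u := \sup_\zeta v_\zeta$; by the max-of-psh property (Proposition, item 4) and the fact that each $v_\zeta$ meets $\psi$ at $\zeta$ from below, $u^* \in \mathcal U(\Omega,\psi,f)$ and $u = \psi$ on $\partial\Omega$.

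Next I would extract the modulus-of-continuity bound. The key mechanism is the standard translation trick: for a small vector $\tau \in \mathbb H^n$ with $\|\tau\| \leq t$, compare $u$ with $u(\cdot + \tau)$ on $\Omega \cap (\Omega - \tau)$. On the boundary of this set the discrepancy is controlled by the boundary modulus $\theta_\psi$ of $\psi$ together with the quadratic barrier, which is where the exponent $\tfrac12$ enters: a displacement of size $t$ in the interior translates, through the barrier $\varphi \asymp -\|q-\zeta\|^2$, into a boundary deviation of order $\theta_\psi(t^{1/2})$ plus $t^{1/2}$. Since both $u$ and the translated competitor satisfy the same lower bound $\Delta_a(\cdot) \geq a_n\|f\|^{1/n}_{L^\infty}$, one shifts the competitor down by $\eta \max\{\theta_\psi(t^{1/2}), t^{1/2}\}$, checks the comparison principle hypotheses hold on $\Omega \cap (\Omega-\tau)$, and concludes $u(q+\tau) - u(q) \leq \eta\max\{\theta_\psi(t^{1/2}), t^{1/2}\}$; symmetry in $\tau \leftrightarrow -\tau$ gives $\theta_u(t) \leq \eta\max\{\theta_\psi(t^{1/2}), t^{1/2}\}$ with $\eta = \lambda(1 + a_n\|f\|^{1/n}_{L^\infty(\overline\Omega)})$.

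The main obstacle I anticipate is purely quaternionic and is exactly the point Alesker flagged: there is no rich supply of coordinate changes preserving $\mathcal{PSH}$, so the barrier cannot be pulled back from the ball and must be constructed intrinsically from the defining function. The technical heart is therefore verifying that $v_\zeta$ lies in $\mathcal U$ — i.e. checking $\Delta_a v_\zeta \geq a_n f^{1/n}$ uniformly in $a \in \mathcal H'_n$ — for which I would lean on the linearity of $\Delta_a$, the computation $\Delta_a(A\varphi) = A\Delta_a\varphi$ with $\Delta_a\varphi$ bounded below by a positive constant $c(\Omega) > 0$ (strict plurisubharmonicity of $\varphi$, uniform over $\det a \geq 1$ via Lemma~3.4 of \cite{WW1}), and the elementary fact that $\Delta_a$ of a $C^{1,1}$ function is bounded, so the $-M\theta_\psi(\|q-\zeta\|^{1/2})$ term contributes a bounded (even if large) negative amount that $Ac(\Omega)$ absorbs once $A$ is taken large relative to $M$ and to $a_n\|f\|^{1/n}_{L^\infty}$. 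The subadditivity and monotonicity of $\theta_\psi$, plus Lemma~\ref{lem1} applied to $\overline{\theta_\psi}$, are what keep all these constants in the advertised form $\lambda(1 + a_n\|f\|^{1/n}_{L^\infty})$ with $\lambda$ depending only on $\Omega$. The rest — continuity of $u$, the boundary values, and the comparison-principle bookkeeping — follows the complex template of \cite{GKZ} and \cite{C} essentially verbatim, now that the quaternionic Bedford–Taylor machinery and comparison principle quoted earlier in the paper are available.
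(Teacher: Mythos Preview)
Your barrier has a genuine gap. You write the local subsolution in the schematic form
\[
v_\zeta(q)=\psi(\zeta)-M\,\theta_\psi\bigl(\|q-\zeta\|^{1/2}\bigr)+A\varphi(q),
\]
and justify $v_\zeta\in\mathcal U$ by saying ``$\Delta_a$ of a $C^{1,1}$ function is bounded, so the $-M\theta_\psi(\|q-\zeta\|^{1/2})$ term contributes a bounded negative amount that $Ac(\Omega)$ absorbs.'' But the middle term is \emph{not} $C^{1,1}$: even for $\theta_\psi(t)=t^\alpha$ with $0<\alpha<1$ one has $\theta_\psi(\|q-\zeta\|^{1/2})=\|q-\zeta\|^{\alpha/2}$, whose real Laplacian (hence each $\Delta_a$) behaves like $\|q-\zeta\|^{\alpha/2-2}\to+\infty$ as $q\to\zeta$. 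Thus $\Delta_a\bigl(-M\theta_\psi(\|q-\zeta\|^{1/2})\bigr)\to-\infty$ near $\zeta$ and no finite multiple $A\varphi$ with bounded $\Delta_a\varphi$ can absorb it. For a general modulus $\theta_\psi$ the function need not even have distributional second derivatives that are measures. So your $v_\zeta$ is not, in general, plurisubharmonic, let alone in $\mathcal U$.

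The paper avoids this by decoupling the two requirements. Following Charabati \cite{C}, it first builds for each $\xi\in\partial\Omega$ a \emph{plurisubharmonic} function $v_\xi\in PSH(\Omega)\cap C(\overline\Omega)$ with $v_\xi(\xi)=\psi(\xi)$, $v_\xi\le\psi$ on $\partial\Omega$, and $\theta_{v_\xi}(t)\le C\,\theta_\psi(t^{1/2})$; the plurisubharmonicity there comes from composing the \emph{concave majorant} $\overline{\theta_\psi}$ (via Lemma~\ref{lem1}) with a suitable peak function, not from a brute $A\varphi$ correction. The Monge--Amp\`ere lower bound is then obtained by a separate, completely smooth quadratic shift: one applies the barrier construction to $\widetilde\psi=\psi-K_1|q-q_0|^2+K_2$ and sets $u_\xi=v+K_1|q-q_0|^2-K_2$, so that $\Delta_a u_\xi\ge \Delta_a(K_1|q-q_0|^2)\ge a_nf^{1/n}$ with $K_1=a_n\|f\|_{L^\infty}^{1/n}$. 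The modulus of $u_\xi$ is then read off directly from $\theta_v$ and the Lipschitz bound on $|q-q_0|^2$, uniformly in $\xi$; the supremum over $\xi$ inherits the same bound without any translation/comparison argument. Your translation trick in the second paragraph is thus unnecessary for this proposition (it is the standard route for the \emph{solution}, used later in Theorem~\ref{thm1}), while the essential missing ingredient in your first paragraph is a barrier that is already psh before you try to add the $f$--term.
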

\begin{proof}
Fix $\xi\in\partial\Omega.$ We will prove that there exists $u_{\xi}\in\mathcal{U}(\Omega,\psi,f)$ such that $u_{\xi}(\xi)=\psi(\xi).$ \\ As in the proof of proposition 3.2 in \cite{C}, and by using Lemma \ref{lem1} we prove that there exists a constant $C>0$ depending only on $\Omega$ such that every point $\xi\in\partial\Omega$ and $\psi\in C(\partial\Omega),$ there is a function $v_{\xi}\in PSH(\Omega)\cap C(\overline{\Omega})$ such that
\begin{enumerate}
  \item $v_{\xi}(q)\leq\psi(q)$ $\forall q\in\partial\Omega$
  \item $v_{\xi}(\xi)=\psi(\xi)$
  \item $\theta_{v_{\xi}}(t)\leq C\theta_{\psi}(t^{\frac{1}{2}}).$
\end{enumerate}
Fix a point $q_{0}\in\Omega$ and choose $K_{1}\geq0$ such that $K_{1}=a_{n}\sup_{\overline{\Omega}}f^{\frac{1}{n}}.$ Then $$\Delta_{a}(K_{1}|q-q_{0}|^{2})=K_{1}\Delta_{a}|q-q_{0}|^{2}\geq a_{n}f^{\frac{1}{n}}(q),$$ for all $a\in\mathcal{H}'_{n}.$ Set $K_{2}=K_{1}|\xi-q_{0}|^{2}.$ Then for the continuous function
$\widetilde{\psi}(q):= \psi(q)-K_{1}|q-q_{0}|^{2}+K_{2}$ we have $v=v_{\xi}$ such that 1,2 and 3 hold.
Then $u_{\xi}\in\mathcal{U}(\Omega,\psi,f)$ is given by $$u_{\xi}(q):= v(q)+K_{1}|q-q_{0}|^{2}-K_{2}$$
Indeed, $u_{\xi}\in PSH(\Omega)\cap C(\overline{\Omega})$ and we have
$v(q)\leq\widetilde{\psi}(q)=\psi(q)-K_{1}|q-q_{0}|^{2}+ K_{2}$ on $\partial\Omega.$ \\ So that $u_{\xi}(q)\leq\psi(q)$ on $\partial\Omega$ and $u_{\xi}(\xi)=\psi(\xi).$ We have
$$\Delta_{a}u_{\xi}=\Delta_{a}v+K_{1}\Delta_{a}|q-q_{0}|^{2}\geq a_{n}f^{\frac{1}{n}} \ \ in \ \ \Omega.$$
Then, by the hypothesis, we can get an estimate for the modulus of continuity of $u_{\xi}$
\begin{eqnarray*}
  \theta_{u_{\xi}}(t)=\sup_{|q-q'|\leq t}|u(q)-u(q')|&\leq& \theta_{v}(t)+K_{1}\theta_{|q-q_{0}|^{2}}(t)\\
   &\leq& C\theta_{\widetilde{\psi}}(t^{\frac{1}{2}})+4d^{\frac{3}{2}}K_{1}t^{\frac{1}{2}}\\
   &\leq& C\theta_{\psi}(t^{\frac{1}{2}})+2dK_{1}(C+2d^{\frac{1}{2}})t^{\frac{1}{2}}\\
   &\leq& (C+2d^{\frac{1}{2}})(1+2dK_{1})\max\{\theta_{\psi}(t^{\frac{1}{2}}),t^{\frac{1}{2}}\}
\end{eqnarray*}
Then, we choose $\lambda$ so that $\theta_{u_{\xi}}(t)\leq\lambda(1+a_{n}\|f\|^{\frac{1}{n}}_{L^{\infty}(\overline{\Omega})})\max\{\theta_{\psi}(t^{\frac{1}{2}}),t^{\frac{1}{2}}\}.$ Hence the desired result follows.
\end{proof}
\begin{cor}\label{cor1}
Taking the same assumption of Proposition \ref{pro2}. There exists a plurisuperharmonic function $\widetilde{u}\in C(\overline{\Omega})$ such that $\widetilde{u}=\psi$ on $\partial\Omega$ and
$$\theta_{\widetilde{u}}(t)\leq\eta\max\{\theta_{\psi}(t^{\frac{1}{2}}),t^{\frac{1}{2}}\},$$ where
$\eta=\lambda(1+a_{n}\|f\|^{\frac{1}{n}}_{L^{\infty}(\overline{\Omega})})$ and $\lambda\geq1$ is a constant depending on $\Omega.$
\end{cor}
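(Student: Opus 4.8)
The plan is to obtain $\widetilde u$ directly from Proposition \ref{pro2} applied to the boundary data $-\psi$. The starting observation is that the optimal modulus of continuity is insensitive to a change of sign: since $|(-\psi)(x)-(-\psi)(y)|=|\psi(x)-\psi(y)|$ for all $x,y\in\partial\Omega$, we have $\theta_{-\psi}=\theta_{\psi}$. Hence, applying Proposition \ref{pro2} with $\psi$ replaced by $-\psi$ and with the same density $f$, we get a function $w\in\mathcal{U}(\Omega,-\psi,f)\subset PSH(\Omega)\cap C(\overline{\Omega})$ with $w=-\psi$ on $\partial\Omega$ and $\theta_{w}(t)\leq\eta\max\{\theta_{-\psi}(t^{1/2}),t^{1/2}\}=\eta\max\{\theta_{\psi}(t^{1/2}),t^{1/2}\}$, where $\eta=\lambda(1+a_{n}\|f\|^{1/n}_{L^{\infty}(\overline{\Omega})})$ is precisely the constant in the statement.

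Next I would set $\widetilde u:=-w$. Negating a plurisubharmonic function yields a plurisuperharmonic one, so $\widetilde u$ is plurisuperharmonic and belongs to $C(\overline{\Omega})$; moreover $\widetilde u=-(-\psi)=\psi$ on $\partial\Omega$, and $\theta_{\widetilde u}(t)=\theta_{w}(t)\leq\eta\max\{\theta_{\psi}(t^{1/2}),t^{1/2}\}$. This is exactly the asserted estimate, so the corollary follows.

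There is essentially no obstacle here, but one point deserves a remark: one should \emph{not} expect $\widetilde u$ itself to satisfy a Monge--Amp\`ere subsolution inequality, since negation destroys plurisubharmonicity and hence the condition $\Delta_{a}\cdot\geq a_{n}f^{1/n}$. This is harmless, because Corollary \ref{cor1} only requires a plurisuperharmonic function with the prescribed boundary values and modulus of continuity; the subsolution property of $w$ enters only through Proposition \ref{pro2}, which is what guarantees that $w$, and thus $\widetilde u$, is continuous on $\overline{\Omega}$ with the stated modulus. (Alternatively, one could invoke Proposition \ref{pro2} with $f\equiv0$ and boundary data $-\psi$, producing an upper barrier with constant $\lambda$, which is majorized by $\lambda(1+a_{n}\|f\|^{1/n}_{L^{\infty}(\overline{\Omega})})$, so the claimed inequality still holds.)
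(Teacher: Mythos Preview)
Your argument is correct and coincides with the paper's own proof: apply Proposition \ref{pro2} to the boundary datum $-\psi$ (with the same $f$), obtain a subsolution $w=u_1$ with $w=-\psi$ on $\partial\Omega$ and the stated modulus bound, and set $\widetilde u:=-w$. The observation $\theta_{-\psi}=\theta_{\psi}$ and the fact that negation turns plurisubharmonic into plurisuperharmonic are exactly the ingredients used in the paper; your additional remark about the subsolution property being irrelevant for $\widetilde u$ is accurate but not needed.
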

\begin{proof}
We can use the same construction as in the proof of Proposition \ref{pro2} for $\psi_{1}=-\psi\in C(\partial\Omega),$ then there exists $u_{1}\in\mathcal{U}(\Omega,\psi_{1},f)$ such that $u_{1}=\psi_{1}$ on $\partial\Omega$ and
$\theta_{u_{1}}(t)\leq\eta\max\{\theta_{\psi_{1}}(t^{\frac{1}{2}}),t^{\frac{1}{2}}\}.$ Then, we set $\widetilde{u}=-u_{1}$ which is plurisuperharmonic function on $\Omega,$ continuous on $\overline{\Omega}$ and satisfies $\widetilde{u}=\psi$ on $\partial\Omega$ and $\theta_{\widetilde{u}}(t)\leq\eta\max\{\theta_{\psi}(t^{\frac{1}{2}}),t^{\frac{1}{2}}\}.$
\end{proof}
Now, we are in position to prove an estimate for the modulus of continuity of the solution to Dirichlet problem for quaternionic Monge-Amp\`{e}re equation.
\begin{thm}\label{thm1}
Let $\Omega$ be a smoothly bounded strongly pseudoconvex domain in $\mathbb{H}^{n},$ suppose that $0\leq f\in C(\overline{\Omega})$ and $\psi\in C(\partial\Omega).$ Then the modulus of continiuity $\theta_{u}$ of the solution $u$ satisfies the following estimate $$ \theta_{u}(t)\leq \gamma(1+a_{n}\|f\|^{\frac{1}{n}}_{L^{\infty}(\overline{\Omega})})\max\{\theta_{\psi}(t^{\frac{1}{2}}),a_{n}\theta_{f^{\frac{1}{n}}}(t),t^{\frac{1}{2}}\}$$
where $\gamma\geq1$ is a constant depending only on $\Omega.$
\end{thm}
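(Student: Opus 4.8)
\emph{Plan of proof.} Fix a vector $\tau\in\mathbb{R}^{4n}$ and set $t:=|\tau|$; it suffices to bound $u(q+\tau)-u(q)$ from above, uniformly in $q$ with $q,q+\tau\in\overline\Omega$, by a nondecreasing function of $t$, and then pass to $\pm\tau$ and take the supremum. For $t$ bounded below by a fixed $t_0(\Omega)$ the inequality is trivial once $\gamma$ is large enough (the right-hand side is then $\ge t_0^{1/2}$, while $\theta_u(t)\le 2\|u\|_{L^\infty(\overline\Omega)}$ and $\|u\|_{L^\infty(\overline\Omega)}$ is controlled by $\Omega$, $\|\psi\|_\infty$, $\|f\|_\infty$), so we assume $t$ small. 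Two barriers will be used throughout: the subsolution $\underline u\in\mathcal U(\Omega,\psi,f)$ with $\underline u=\psi$ on $\partial\Omega$ furnished by Proposition~\ref{pro2}, and the plurisuperharmonic $\widetilde u\in C(\overline\Omega)$ with $\widetilde u=\psi$ on $\partial\Omega$ from Corollary~\ref{cor1}. Since $u=\sup\mathcal U$ we have $\underline u\le u$; and since $(\Delta u)^n=f\,dV\ge 0$, Proposition~\ref{pro1} gives $\Delta_a u\ge a_nf^{1/n}\ge 0$ for all $a\in\mathcal H'_n$, so $u$ is $\Delta_a$-subharmonic, whereas $\widetilde u$ is $\Delta_a$-superharmonic; as the two agree on $\partial\Omega$, the maximum principle for the elliptic operator $\Delta_a$ gives $u\le\widetilde u$ on $\overline\Omega$. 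By Proposition~\ref{pro2} and Corollary~\ref{cor1}, both $\theta_{\underline u}$ and $\theta_{\widetilde u}$ are dominated by $\eta\max\{\theta_\psi(r^{1/2}),r^{1/2}\}$ with $\eta=\lambda(1+a_n\|f\|^{1/n}_{L^\infty(\overline\Omega)})$ and $\lambda\ge 1$.

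\emph{Estimate near $\partial\Omega$.} Write $d(q)=\mathrm{dist}(q,\partial\Omega)$ and, for $d(q)\le s$, let $\xi_q\in\partial\Omega$ be a nearest boundary point. From $\underline u\le u\le\widetilde u$, the equalities $\underline u(\xi_q)=\widetilde u(\xi_q)=\psi(\xi_q)$, and the modulus bounds above,
$$|u(q)-\psi(\xi_q)|\le\eta\max\{\theta_\psi(s^{1/2}),s^{1/2}\}\qquad\text{whenever }d(q)\le s.$$
Hence, if $d(q)\le s$ and $t\le s$ (so that $d(q+\tau)\le 2s$ and $|\xi_q-\xi_{q+\tau}|\le 4s$), the triangle inequality routed through $\psi(\xi_q),\psi(\xi_{q+\tau})$, together with subadditivity and monotonicity of $\theta_\psi$ and $\eta\ge 1$, yields
$$u(q+\tau)-u(q)\le C\,\eta\max\{\theta_\psi(s^{1/2}),s^{1/2}\}$$
for a constant $C=C(\Omega)$.

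\emph{Interior comparison.} This is the core of the argument. Fix $q_0\in\Omega$ and let $c_0=c_0(n)>0$ satisfy $\Delta_a(|q-q_0|^2)\ge c_0$ for every $a\in\mathcal H'_n$ (such a $c_0$ is implicit in the proof of Proposition~\ref{pro2}). For $s\ge t$ set $\Omega_s=\{q\in\Omega:d(q)>s\}$ (for $q\in\Omega_s$, $q+\tau\in\Omega$), and on $\Omega_s$ let
$$w_\tau(q):=u(q+\tau)+\frac{a_n}{c_0}\,\theta_{f^{1/n}}(t)\,|q-q_0|^2 .$$
Because $\Delta_a$ has constant coefficients, $\Delta_a\big(u(\cdot+\tau)\big)(q)=(\Delta_a u)(q+\tau)\ge a_nf(q+\tau)^{1/n}\ge a_n\big(f(q)^{1/n}-\theta_{f^{1/n}}(t)\big)$, and the quadratic correction is designed to cancel exactly this error, giving $\Delta_a w_\tau\ge a_nf^{1/n}$ on $\Omega_s$ for all $a\in\mathcal H'_n$; Proposition~\ref{pro1} then converts this back into $(\Delta w_\tau)^n\ge f\,dV=(\Delta u)^n$ on $\Omega_s$. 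On $\partial\Omega_s$ one has $d(q)=s$, so the estimate near the boundary gives $u(q+\tau)\le u(q)+C\eta\max\{\theta_\psi(s^{1/2}),s^{1/2}\}$, while $\frac{a_n}{c_0}\theta_{f^{1/n}}(t)|q-q_0|^2\le\frac{a_n}{c_0}\mathrm{diam}(\Omega)^2\,\theta_{f^{1/n}}(t)$; hence with
$$B:=C\eta\max\{\theta_\psi(s^{1/2}),s^{1/2}\}+\frac{a_n}{c_0}\,\mathrm{diam}(\Omega)^2\,\theta_{f^{1/n}}(t)$$
we have $w_\tau-B\le u$ on $\partial\Omega_s$. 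Since $(\Delta(w_\tau-B))^n=(\Delta w_\tau)^n\ge(\Delta u)^n$, the comparison principle for the quaternionic Monge-Amp\`ere operator on $\Omega_s$ gives $w_\tau-B\le u$ on $\Omega_s$, that is $u(q+\tau)-u(q)\le B$ for every $q\in\Omega_s$.

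\emph{Conclusion and main difficulty.} Combining the boundary estimate (for $d(q)\le s$) with the interior one (for $d(q)>s$) and choosing $s=t$, so that $s^{1/2}=t^{1/2}$, we obtain for all admissible $q$
$$u(q+\tau)-u(q)\le C\eta\max\{\theta_\psi(t^{1/2}),t^{1/2}\}+\frac{a_n}{c_0}\,\mathrm{diam}(\Omega)^2\,\theta_{f^{1/n}}(t).$$
Taking the supremum over $|\tau|\le t$, substituting $\eta=\lambda(1+a_n\|f\|^{1/n}_{L^\infty(\overline\Omega)})$ and using $1+a_n\|f\|^{1/n}_{L^\infty(\overline\Omega)}\ge 1$ to absorb the last term, we reach
$$\theta_u(t)\le\gamma\big(1+a_n\|f\|^{1/n}_{L^\infty(\overline\Omega)}\big)\max\{\theta_\psi(t^{1/2}),a_n\theta_{f^{1/n}}(t),t^{1/2}\},$$
with $\gamma$ depending only on $\Omega$ (through $\lambda$, $C$, $c_0$ and $\mathrm{diam}(\Omega)$), which is the assertion. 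I expect the interior comparison step to be the principal obstacle: one must make rigorous the device of cancelling the mismatch between $f(\cdot+\tau)$ and $f$ by a quadratic term at the level of the linear operators $\Delta_a$ and then returning to the quaternionic Monge-Amp\`ere operator via Proposition~\ref{pro1} in order to invoke the comparison principle on the (merely open, possibly disconnected) domain $\Omega_s$, while being careful that all translates are formed where $u(\cdot+\tau)$ is defined. The boundary-strip part, by contrast, is bookkeeping once the sandwich $\underline u\le u\le\widetilde u$ and the moduli from Proposition~\ref{pro2} and Corollary~\ref{cor1} are available.
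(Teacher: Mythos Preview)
Your proof is correct and follows essentially the same route as the paper, which simply invokes the argument of Theorem~1.1 in \cite{C}: the barriers from Proposition~\ref{pro2} and Corollary~\ref{cor1} give the sandwich $\underline u\le u\le\widetilde u$ and hence the boundary-layer estimate, and a translate of $u$ plus a quadratic correction absorbs $\theta_{f^{1/n}}(t)$ via the $\Delta_a$-inequalities and Proposition~\ref{pro1}, after which the comparison principle yields the interior bound. The only cosmetic difference is that you run the interior comparison directly on $\Omega_s$ via the comparison principle, whereas Charabati glues the translated competitor with $u$ into a global element of $\mathcal U(\Omega,\psi,f)$ and invokes the Perron description $u=\sup\mathcal U$; the effect is identical.
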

\begin{proof}
Thanks to Proposition \ref{pro2}, Corollary \ref{cor1} and the comparaison principle (Corollary 1.1 in \cite{WZ}), we can follow the same proof of Theorem 1.1 in \cite{C}, with setting $g(t)=\max\{\eta\max(\theta_{\psi}(t^{\frac{1}{2}}),t^{\frac{1}{2}}),a_{n}\theta_{f^{\frac{1}{n}}}(t)\}$ and we get the desired result.
\end{proof}
Now, it is easy to check that this previous Theorem has the following consequence.
\begin{cor}\label{cor2}
Let $\Omega$ be a smoothly bounded strongly pseudoconvex domain in $\mathbb{H}^{n}.$ Let $\psi\in Lip_{2\alpha}(\partial\Omega)$ and $0\leq f^{\frac{1}{n}}\in Lip_{\alpha}(\overline{\Omega}),$ $0<\alpha\leq\frac{1}{2}.$ Then the unique solution of Dirichlet problem $u$ is $\alpha$-H\"{o}lder continuous on $\overline{\Omega}.$
\end{cor}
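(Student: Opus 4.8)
\emph{Proof plan.} The plan is to read this off directly from the modulus-of-continuity estimate of Theorem \ref{thm1}, so the first step is to verify its hypotheses. Since $f^{\frac1n}\in Lip_{\alpha}(\overline{\Omega})$ is in particular continuous on the compact set $\overline{\Omega}$, the density $f=(f^{\frac1n})^{n}$ belongs to $C(\overline{\Omega})$, $f\geq 0$, and $\|f\|_{L^{\infty}(\overline{\Omega})}^{\frac1n}<\infty$; likewise $\psi\in Lip_{2\alpha}(\partial\Omega)\subset C(\partial\Omega)$. Hence Theorem \ref{thm1} applies to the (unique) solution $u$ of \eqref{DP12} with $d\nu=f\,dV$ and yields
$$\theta_{u}(t)\leq \gamma\bigl(1+a_{n}\|f\|_{L^{\infty}(\overline{\Omega})}^{\frac1n}\bigr)\max\{\theta_{\psi}(t^{\frac12}),\,a_{n}\theta_{f^{\frac1n}}(t),\,t^{\frac12}\},\qquad 0\leq t\leq r,$$
where $r=\operatorname{diam}\Omega$ and $\gamma\geq 1$ depends only on $\Omega$.

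Next I would convert the Hölder hypotheses into pointwise bounds on the three moduli of continuity occurring on the right. Writing $[\psi]_{2\alpha}$ and $[f^{\frac1n}]_{\alpha}$ for the respective Hölder seminorms, one has $\theta_{\psi}(s)\leq [\psi]_{2\alpha}\,s^{2\alpha}$ and $\theta_{f^{\frac1n}}(t)\leq [f^{\frac1n}]_{\alpha}\,t^{\alpha}$ for all admissible arguments; in particular $\theta_{\psi}(t^{\frac12})\leq [\psi]_{2\alpha}\,t^{\alpha}$. For the remaining term $t^{\frac12}$ I use that the estimate is only needed on $[0,r]$ and that $\alpha\leq\frac12$, so $t^{\frac12}=t^{\frac12-\alpha}\,t^{\alpha}\leq r^{\frac12-\alpha}\,t^{\alpha}$ there. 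Thus each of the three quantities inside the maximum is bounded by a constant times $t^{\alpha}$.

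Substituting these bounds into the displayed inequality gives
$$\theta_{u}(t)\leq \gamma\bigl(1+a_{n}\|f\|_{L^{\infty}(\overline{\Omega})}^{\frac1n}\bigr)\,\max\{[\psi]_{2\alpha},\,a_{n}[f^{\frac1n}]_{\alpha},\,r^{\frac12-\alpha}\}\;t^{\alpha}=:C\,t^{\alpha},$$
for $0\leq t\leq r$, with $C$ depending only on $\Omega$, $\alpha$, $[\psi]_{2\alpha}$, $[f^{\frac1n}]_{\alpha}$ and $\|f\|_{L^{\infty}(\overline{\Omega})}$. Since by definition $\theta_{u}(t)=\sup_{|q-q'|\leq t}|u(q)-u(q')|$, the bound $\theta_{u}(t)\leq C t^{\alpha}$ is exactly the assertion that $u\in C^{0,\alpha}(\overline{\Omega})$, which completes the argument.

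I do not expect any real obstacle here: the corollary is a bookkeeping consequence of Theorem \ref{thm1}. The only points needing a moment's care are checking the continuity and $L^{\infty}$-boundedness required to invoke that theorem, and verifying that the three exponents produced by the estimate — $\frac12$ from the boundary term after composition with $t^{1/2}$ (where $\psi$ is $2\alpha$-Hölder, hence $\psi(t^{1/2})$ is $\alpha$-Hölder in $t$), $\alpha$ from the density, and the bare $t^{1/2}$ — all collapse to the single exponent $\alpha$; this is precisely where the hypothesis $\alpha\leq\frac12$ is used and where boundedness of $\Omega$ keeps the constant finite.
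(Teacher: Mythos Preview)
Your proposal is correct and is exactly the intended derivation: the paper itself gives no proof beyond ``it is easy to check that this previous Theorem has the following consequence,'' and you have simply written out that check by bounding each of the three terms $\theta_{\psi}(t^{1/2})$, $a_{n}\theta_{f^{1/n}}(t)$, $t^{1/2}$ by a constant times $t^{\alpha}$ using the H\"older hypotheses and $\alpha\leq\tfrac12$.
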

\section{The stability estimate}\label{SE}
In this section, the main goal is to prove the stability estimate, Theorem \ref{4}. For this end, we need some results which are the following:
\begin{lem}\label{1}
Let $u,v\in PSH\cap L^{\infty}(\Omega)$ such that $\underline{\lim}_{\zeta\longrightarrow\partial\Omega}(u-v)(\zeta)>0.$ Then for all $t,s>0,$
$$ s^{n}cap(\{u-v<-t-s\})\leq\int_{\{u-v<-t\}}(\Delta u)^{n}.$$
\end{lem}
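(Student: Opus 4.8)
The plan is to adapt the classical pluripotential argument (e.g. the Cegrell–Kołodziej comparison-of-capacities lemma in the complex case) to the quaternionic Bedford–Taylor calculus developed by Wan and Wang. First I would fix $t,s>0$ and set $w:=u-v$, so by hypothesis $\underline{\lim}_{\zeta\to\partial\Omega}w(\zeta)>0$; in particular the open set $U:=\{w<-t\}$ is relatively compact in $\Omega$, and likewise $\{w<-t-s\}\Subset U$. The goal is to bound $\mathrm{cap}(\{w<-t-s\},\Omega)$, so let $h\in PSH(\Omega)$ with $-1\le h<0$ be an arbitrary competitor for the capacity; I must show $\int_{\{w<-t-s\}}(\Delta h)^n\le s^{-n}\int_{\{w<-t\}}(\Delta u)^n$.

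The key trick is to introduce the auxiliary function $\phi:=\max\{u,\,v+sh+t\}$. Since $-1\le h<0$, on the region where $w\ge -t$ one has $v+sh+t< v+t\le u$, hence $\phi=u$ there; in particular $\phi=u$ near $\partial\Omega$ and on a neighborhood of $\{w=-t\}$, so $\phi\in PSH\cap L^\infty(\Omega)$ and $\phi$ agrees with $u$ outside $U$. On the smaller set $\{w<-t-s\}$ we have $v+sh+t> v+t-s> u$ (using $h>-1$ wait—rather $sh>-s$ so $v+sh+t> v-s+t> u$ on $\{u-v<-t-s\}$... one checks the inequality $u<v+sh+t$ holds where $u-v<-t-s\le -t+sh$), so $\phi=v+sh+t$ on $\{w<-t-s\}$. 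Consequently $\Delta\phi = s\,\Delta h + \Delta v$ on that set, which by positivity of $\Delta v$ and monotonicity of the mixed Monge–Ampère operator on strongly positive currents gives $(\Delta\phi)^n\ge s^n(\Delta h)^n$ on $\{w<-t-s\}$; this uses only that $(\Delta(\cdot))^n$ is an increasing operator in the sense that $\Delta\phi\ge s\,\Delta h$ as strongly positive forms implies $(\Delta\phi)^n\ge s^n(\Delta h)^n$, which follows from the multilinear expansion of $\wedge^n$ and positivity of each term. Then
$$
s^n\int_{\{w<-t-s\}}(\Delta h)^n \le \int_{\{w<-t-s\}}(\Delta\phi)^n \le \int_{U}(\Delta\phi)^n.
$$

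To finish I would invoke the comparison principle for the quaternionic Monge–Ampère operator (Corollary 1.1 in \cite{WZ}, already used in the proof of Theorem \ref{thm1}): since $\phi=u$ on $\Omega\setminus U$ and $\phi\ge u$ everywhere (as $\phi$ is a max involving $u$), and both are bounded plurisubharmonic, the total Monge–Ampère mass of $\phi$ over the relatively compact $U$ does not exceed that of $u$, i.e. $\int_U(\Delta\phi)^n\le\int_U(\Delta u)^n=\int_{\{w<-t\}}(\Delta u)^n$. Taking the supremum over all admissible $h$ yields $s^n\mathrm{cap}(\{w<-t-s\},\Omega)\le\int_{\{w<-t\}}(\Delta u)^n$, as claimed.

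The main obstacle I anticipate is making the comparison step $\int_U(\Delta\phi)^n\le\int_U(\Delta u)^n$ fully rigorous: in the complex setting this is the standard fact that $\int_{\{\phi<u\}}(\Delta u)^n\le\int_{\{\phi<u\}}(\Delta\phi)^n$ together with $\phi=u$ off $U$ forces equality of total masses on $U$ minus a controlled remainder, and one must check that the quaternionic Bedford–Taylor theory (convergence under decreasing limits, the Stokes-type formula of Lemma \ref{200}, and locality of $(\Delta\cdot)^n$ on open sets where the functions are bounded psh) is developed far enough to support it — all of which is provided by \cite{WW,WZ}. A secondary technical point is justifying that $\{w<-t-s\}\Subset\Omega$ and that the various set inclusions hold up to sets of Monge–Ampère measure zero near the level $\{w=-t\}$, which is handled by replacing $t$ by $t+\varepsilon$ and letting $\varepsilon\to0$, using that $r\mapsto\mathrm{cap}(\{w<-r\},\Omega)$ and $r\mapsto\int_{\{w<-r\}}(\Delta u)^n$ are monotone.
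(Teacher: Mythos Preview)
Your strategy is the right one and coincides with the paper's, but there is a sign slip that breaks the argument as written, and an unnecessary detour that creates the very ``main obstacle'' you worry about.

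First, the sign: with $\phi=\max\{u,\,v+sh+t\}$ your claim that $\phi=u$ on $\{w\ge -t\}$ is false. The condition $w=u-v\ge -t$ gives only $u\ge v-t$, not $u\ge v+t$, so in general $v+sh+t$ can exceed $u$ there. The correct auxiliary comparator is $v+sh-t$ (equivalently $v-t+sh$): then on $\{u-v\ge -t\}$ one has $u\ge v-t\ge v-t+sh$ since $h\le 0$, while on $\{u-v<-t-s\}$ one has $u<v-t-s\le v-t+sh$ since $h\ge -1$. This is exactly the chain of inclusions
\[
\{u-v<-t-s\}\subset\{u<v-t+sh\}\subset\{u-v<-t\}\Subset\Omega
\]
that the paper writes down.

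Second, the detour: once the sign is fixed you do not need to introduce $\phi=\max\{u,\cdot\}$ at all, and hence you do not need the step $\int_U(\Delta\phi)^n\le\int_U(\Delta u)^n$ that you flag as the main obstacle. The paper simply applies the comparison principle (\cite[Theorem 1.2]{WZ}) directly to $u$ and $v-t+s\varphi$ on the sublevel set $\{u<v-t+s\varphi\}$, obtaining in one line
\[
s^{n}\int_{\{u-v<-t-s\}}(\Delta\varphi)^{n}\le\int_{\{u<v-t+s\varphi\}}(\Delta(v-t+s\varphi))^{n}\le\int_{\{u<v-t+s\varphi\}}(\Delta u)^{n}\le\int_{\{u-v<-t\}}(\Delta u)^{n},
\]
using $(\Delta(v-t+s\varphi))^n\ge s^n(\Delta\varphi)^n$ on the left and the set inclusions on the two ends. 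Taking the supremum over competitors $\varphi$ finishes. Your $\varepsilon$-perturbation and mass-comparison worries disappear because the comparison principle is applied on the exact set $\{u<v-t+s\varphi\}$, not on all of $U$.
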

\begin{proof}
Take $-1\leq\varphi\leq0$ a psh function in $\Omega.$ We have $\{u-v<-t-s\}\subset\{u<v-t+sv\}\subset\{u<v-t\}\Subset\Omega.$ By the comparaison principle \cite[Theorem 1.2]{WZ} we find
$$ s^{n}\int_{\{u-v<-t-s\}}(\Delta\varphi)^{n}\leq\int_{\{u<v-t+s\varphi\}}(\Delta(-t+v+s\varphi))^{n}\leq\int_{\{u-v<-t\}}( \Delta u)^{n}.$$
Taking the supremum and the lemma follows.
\end{proof}
Now, we are going to prove the following estimate which play an important role in the rest.
\begin{lem}\label{2}
Assume that $0\leq f\in L^{p}(\Omega),$ $p>2,$ and a fixed $\alpha\in (1,2).$ Then there exists a constant $D=D(\alpha,\|f\|_{L^{p}})>0$ such that for every $E\Subset\Omega$
$$ 0\leq\int_{E}fdV\leq D[cap(E)]^{\frac{\alpha}{q}},$$
where $\frac{1}{p}+\frac{1}{q}=1.$
\end{lem}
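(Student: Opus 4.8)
The plan is to reduce this to a known inequality between the Lebesgue (or weighted) measure of a set and its Monge--Amp\`ere capacity, following the classical complex-analytic argument of Ko{\l}odziej adapted to the quaternionic Bedford--Taylor theory already recalled in Section~\ref{11111}. The starting point is the H\"older inequality: for $E\Subset\Omega$ and the conjugate exponent $q=p/(p-1)$,
\[
\int_E f\,dV\leq \|f\|_{L^p}\,\bigl(\mathrm{vol}(E)\bigr)^{1/q}.
\]
Thus it suffices to dominate $\mathrm{vol}(E)^{1/q}$ by a power of $\mathrm{cap}(E,\Omega)$; more precisely, since $\alpha<2$ and $q>1$ are fixed with room to spare, it is enough to prove a bound of the form $\mathrm{vol}(E)\leq C\,\mathrm{cap}(E,\Omega)^{\alpha q}$ for all $E\Subset\Omega$, or equivalently (choosing a convenient intermediate exponent) $\mathrm{vol}(E)\leq C\exp\!\bigl(-c\,\mathrm{cap}(E,\Omega)^{-1/n}\bigr)$ type estimates are more than sufficient.

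The key step is therefore the purely pluripotential-theoretic comparison $\mathrm{vol}(E)\lesssim \mathrm{cap}(E,\Omega)^{\beta}$ for some $\beta>0$, ideally any $\beta<\infty$ but in practice a fixed power suffices and then one absorbs it into the constant $D$. I would obtain this by the relative extremal function: fix a ball $B=B(q_0,R)\Subset\Omega$ with $\Omega\subset B(q_0, R')$, and for a compact $K\subset B$ consider $u_K^*$, the regularized relative extremal function of $K$ in $\Omega$ (introduced in Section~\ref{11111}), which is plurisubharmonic, satisfies $-1\leq u_K^*\leq 0$, $u_K^*=-1$ on $K$ up to a pluripolar set, and $\mathrm{cap}(K,\Omega)=\int_\Omega(\Delta u_K^*)^n$. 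Since $u_K^*\equiv -1$ on (most of) $K$, one compares $u_K^*$ with a standard psh test function such as $A(|q-q_0|^2-R'^2)$ and uses the quaternionic comparison principle (Theorem~1.2 in \cite{WZ}, already invoked in Lemma~\ref{1}) together with the explicit computation $(\Delta|q-q_0|^2)^n=8^n\beta_n^n\,dV$ recalled in the proof of Proposition~\ref{pro1}. This yields $\mathrm{vol}(K)\leq c_n\int_\Omega(\Delta u_K^*)^n=c_n\,\mathrm{cap}(K,\Omega)$ when the capacity is small, and more generally a power relation $\mathrm{vol}(K)\leq C\,\mathrm{cap}(K,\Omega)^{\alpha q}$ on all compacts (enlarging $C$ to cover the regime where the capacity is bounded below, where the estimate is trivial since $\mathrm{vol}$ is bounded). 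Passing from compacts to arbitrary Borel $E\Subset\Omega$ is done by inner regularity of both $\mathrm{vol}$ and $\mathrm{cap}(\cdot,\Omega)$.

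Combining the two displays gives $\int_E f\,dV\leq \|f\|_{L^p}\bigl(C\,\mathrm{cap}(E,\Omega)^{\alpha q}\bigr)^{1/q}=D\,\mathrm{cap}(E,\Omega)^{\alpha/q}\cdot q$-factor, i.e. the asserted bound with $D=D(\alpha,\|f\|_{L^p})$; the hypothesis $p>2$ enters precisely so that $q<2$ and there is enough integrability of psh functions (Proposition~2 of \cite{SM}) for the extremal-function estimates to be non-vacuous, while the freedom $\alpha\in(1,2)$ is exactly the slack between the sharp exponent and what the H\"older step costs. The main obstacle I anticipate is the quantitative volume-versus-capacity inequality: in the complex case this is a standard consequence of the Alexander--Taylor / Ko{\l}odziej estimates, and one must check that every ingredient (subextension, the comparison principle, the explicit Monge--Amp\`ere mass of $|q|^2$, quasicontinuity of psh functions, and the convergence theorems for $(\Delta u)^n$) has a quaternionic counterpart --- all of which are available from \cite{WW,WZ,WK}, but assembling them with the correct exponents is the delicate part; once that inequality is in hand, the rest is the one-line H\"older estimate above.
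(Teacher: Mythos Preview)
Your overall strategy --- H\"older's inequality followed by a volume--capacity comparison --- is exactly the paper's approach. The paper's proof is in fact two lines: H\"older gives $\int_E f\,dV \le \|f\|_{L^p} V(E)^{1/q}$, and then Lemma~3 of \cite{SM} supplies the estimate $V(E)\le C(\alpha)\,[\mathrm{cap}(E)]^{\alpha}$ for any $\alpha<2$, which is precisely what is needed.

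Where your proposal has a genuine gap is in the sketched proof of this volume--capacity inequality. The comparison-principle argument you describe, comparing $u_K^*$ with $A(|q-q_0|^2-R'^2)$, yields only the \emph{linear} bound $V(K)\le c_n\,\mathrm{cap}(K,\Omega)$. You then claim this can be upgraded to $V(K)\le C\,[\mathrm{cap}(K,\Omega)]^{\alpha}$ (or even exponent $\alpha q$) ``enlarging $C$ to cover the regime where the capacity is bounded below''. But the regime that matters is small capacity, and there the inequality $V(K)\le C\,[\mathrm{cap}(K)]^{\alpha}$ with $\alpha>1$ is strictly \emph{stronger} than the linear one, not weaker; you cannot pass from exponent $1$ to exponent $\alpha>1$ by adjusting constants. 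Since the application in Proposition~\ref{3} requires $\tau=\alpha/q>1$ (hence $\alpha>q>1$) for the iteration of Lemma~\ref{2.4} to terminate, the linear estimate is genuinely insufficient.

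The mechanism that actually produces the exponent $\alpha\in(1,2)$ is the one you allude to but do not deploy: the sharp $L^{\alpha}_{\mathrm{loc}}$ integrability of quaternionic psh functions for $\alpha<2$ (Proposition~2 of \cite{SM}). Roughly, since $u_K^*=-1$ on $K$ one has $V(K)\le\int_\Omega |u_K^*|^{\alpha}\,dV$, and Sroka bounds this $L^\alpha$ norm quantitatively by $\mathrm{cap}(K)$; the ceiling $\alpha<2$ is exactly the integrability threshold, and this is why the quaternionic theory gives only a polynomial (not exponential) volume--capacity estimate, unlike the complex case. Your diagnosis of \emph{where} the restriction $p>2$ enters is correct; what is missing is using that integrability to get the super-linear exponent rather than merely invoking the comparison principle.
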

\begin{proof}
By Holder inequality and using Lemma 3 in \cite{SM}, we have
\begin{eqnarray*}
  \int_{E}fdV &\leq& \|f\|_{L^{p}}V(E)^{\frac{1}{q}}\leq C(\alpha)\|f\|_{L^{p}(\Omega)}[cap(E)]^{\frac{\alpha}{q}}\\
    &\leq& D(\alpha,\|f\|_{L^{p}})[cap(E)]^{\frac{\alpha}{q}}
\end{eqnarray*}
where $\frac{1}{p}+\frac{1}{q}=1,$ hence the lemma follows.
\end{proof}
We will also need the following result, which its proof is similar to Lemma 2.4 in \cite{EGZ}.
\begin{lem}\label{2.4}
Let $f:\mathbb{R}^{+}\longrightarrow\mathbb{R}^{+}$ be a decreasing right-continuous function such that $\lim_{+\infty}f=0.$ Assume there exists $\tau>1,$ $B>0$ such that $f$ satisfies $$ tf(s+t)\leq B[f(s)]^{\tau}, \forall t,s>0.$$ Then, there exists $S_{\infty}:=\frac{2Bf(0)^{\tau-1}}{1-2^{1-\tau}}$ such that $f(s)=0$ for all $s\geq S_{\infty}.$
\end{lem}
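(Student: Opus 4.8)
The plan is to follow the standard De Giorgi-type iteration argument, essentially as in \cite{EGZ}. First I would fix a number $\tau>1$ and the constant $B$ from the hypothesis, and introduce an increasing sequence $(s_k)_{k\geq 0}$ of "levels" defined by $s_0=0$ and $s_{k+1}=s_k+t_k$, where the increments $t_k>0$ are to be chosen geometrically, say $t_k = S_\infty\, 2^{-(k+1)}$ for a target value $S_\infty$ to be pinned down at the end; this makes the total $\sum_k t_k = S_\infty$, so that $s_k \uparrow S_\infty$. I would then set $f_k := f(s_k)$ and aim to prove, by induction on $k$, the decay estimate
$$ f_k \le f_0 \, 2^{-\mu k} $$
for a suitable $\mu>0$; once this is established, letting $k\to\infty$ and using that $f$ is decreasing and right-continuous gives $f(S_\infty)=\lim_k f(s_k)=0$, and monotonicity propagates this to all $s\ge S_\infty$.

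The inductive step is where the functional inequality $t f(s+t)\le B[f(s)]^\tau$ is used: applying it with $s=s_k$ and $t=t_k$ yields
$$ f_{k+1} = f(s_k+t_k) \le \frac{B}{t_k}\,f_k^{\tau} = \frac{B}{S_\infty}\,2^{k+1}\,f_k^{\tau}. $$
Plugging in the induction hypothesis $f_k\le f_0 2^{-\mu k}$, I need
$$ \frac{B}{S_\infty}\,2^{k+1}\, f_0^{\tau} 2^{-\mu\tau k} \le f_0\, 2^{-\mu(k+1)}, $$
i.e. after rearranging, $2^{k+1-\mu\tau k+\mu(k+1)} \le \frac{S_\infty}{B f_0^{\tau-1}}$. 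To kill the $k$-dependence on the left I choose $\mu$ so that the exponent $1-\mu\tau k + \mu k + \mu + 1$ has nonpositive coefficient of $k$, which forces $\mu(\tau-1)\ge 1$; the cleanest choice is $\mu = \tfrac{1}{\tau-1}$, but for the displayed value of $S_\infty$ it is more convenient to take $\mu=1$, which works provided $\tau\ge 2$ — and in the borderline case one argues slightly more carefully, or simply takes $S_\infty$ large enough to absorb the residual factor. With $\mu=1$ the requirement reduces to $2^{\,2-( \tau-1)k} \le \frac{S_\infty}{Bf_0^{\tau-1}}$ for all $k\ge 0$; since $\tau>1$ the left side is maximal at $k=0$, giving the sufficient condition $4 \le \frac{S_\infty}{Bf_0^{\tau-1}}$. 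Comparing with the statement, the asserted value $S_\infty=\frac{2Bf(0)^{\tau-1}}{1-2^{1-\tau}}$ is exactly what comes out if one instead keeps the increments proportional to $2^{-(k+1)}$ but tracks the geometric series $\sum 2^{1-\tau k}$ honestly rather than bounding it crudely; I would reproduce that bookkeeping to match the constant.

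The only genuine subtlety — and the step I expect to require the most care — is verifying the base case $k=0$ and making sure the chosen $S_\infty$ simultaneously satisfies the initialization $f_0\le f_0$ (trivial) and the self-improving inequality at every step; this is a matter of choosing the geometric ratio and $\mu$ compatibly, and is precisely where the exact constant $\frac{2}{1-2^{1-\tau}}$ enters. Everything else — monotonicity, right-continuity to pass to the limit, and the elementary algebra of the geometric sums — is routine. I would therefore structure the write-up as: (i) set up $s_k$, $t_k$, $f_k$; (ii) prove the decay $f_k\le f_0 2^{-k}$ (or $f_0 2^{-k/(\tau-1)}$) by the one-line induction above, checking the constant forces exactly $S_\infty=\frac{2Bf(0)^{\tau-1}}{1-2^{1-\tau}}$; (iii) conclude $f(S_\infty)=0$ and extend to $s\ge S_\infty$ by monotonicity.
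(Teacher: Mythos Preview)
The paper does not give its own proof of this lemma; it simply refers to Lemma~2.4 in \cite{EGZ}. Your De~Giorgi-type iteration is exactly the argument used there, so the approach is the right one.

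One remark on the bookkeeping, since you flag the constant as the delicate point. With your a~priori geometric choice $t_k=S_\infty 2^{-(k+1)}$ and decay ansatz $f_k\le f_0 2^{-\mu k}$, the optimal $\mu=\tfrac{1}{\tau-1}$ forces $S_\infty\ge Bf_0^{\tau-1}2^{\tau/(\tau-1)}$, which only coincides with the stated value $\frac{2Bf(0)^{\tau-1}}{1-2^{1-\tau}}$ at $\tau=2$. The constant in the statement comes out cleanly if you instead choose the increments \emph{adaptively}: set $t_k:=2Bf(s_k)^{\tau-1}$, so the hypothesis gives $f(s_{k+1})\le \tfrac{1}{2}f(s_k)$ immediately, hence $f(s_k)\le 2^{-k}f(0)$; then
\[
\sum_{k\ge 0} t_k \;\le\; 2Bf(0)^{\tau-1}\sum_{k\ge 0} 2^{-k(\tau-1)} \;=\; \frac{2Bf(0)^{\tau-1}}{1-2^{1-\tau}} \;=\; S_\infty,
\]
and right-continuity plus monotonicity finish as you describe. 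This is the version you should write up if you want the displayed $S_\infty$ on the nose; your fixed-ratio variant is correct but yields a slightly different constant.
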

\begin{pro}\label{3}
Let $u,v\in PSH\cap L^{\infty}(\Omega)$ be such that $\underline{\lim}_{\zeta\longrightarrow\partial\Omega}(u-v)(\zeta)\geq0$ and $0\leq f\in L^{p}(\Omega),$ $p>2.$ Suppose that $(\Delta u)^{n}=fdV,$ then for any $0<\beta<\frac{1}{n}(\frac{2}{q}-1),$ $\frac{1}{p}+\frac{1}{q}=1,$ there exists a constant $C=C(\alpha,\|f\|_{L^{p}(\Omega)})$ such that for all $\epsilon>0$
$$\sup_{\Omega}(v-u)\leq \epsilon+ C[cap(\{u-v<-\epsilon\})]^{\beta}.$$
\end{pro}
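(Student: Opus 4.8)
The plan is to run a De Giorgi type iteration on the nonincreasing function $\phi(s):=cap\big(\{u-v<-s\},\Omega\big)$, $s>0$, and then convert the resulting vanishing of $\phi$ into the claimed pointwise bound. First I would record the elementary properties of $\phi$: it is right-continuous, because $\{u-v<-s\}=\bigcup_{t>s}\{u-v<-t\}$ is an increasing union of Borel sets and the capacity is continuous along increasing sequences, and $\phi(s)=0$ for all large $s$, since $u-v$ is bounded below ($u,v$ being bounded). Also, for $t>0$ the set $\{u-v<-t\}$ is relatively compact in $\Omega$ by the hypothesis $\underline{\lim}_{\zeta\to\partial\Omega}(u-v)(\zeta)\ge0$. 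The core estimate to establish is
\[
s^{n}\,\phi(t+s)\ \le\ D\,[\phi(t)]^{\alpha/q}\qquad\text{for all }s,t>0,
\]
for a fixed $\alpha\in(1,2)$, with $D=D(\alpha,\|f\|_{L^{p}(\Omega)})$ the constant of Lemma~\ref{2}. This follows by chaining Lemma~\ref{1} with Lemma~\ref{2}: since $(\Delta u)^{n}=fdV$, Lemma~\ref{1} gives $s^{n}\phi(t+s)\le\int_{\{u-v<-t\}}fdV$, and Lemma~\ref{2} applied to $E=\{u-v<-t\}\Subset\Omega$ bounds the right side by $D\,[cap(E,\Omega)]^{\alpha/q}=D\,[\phi(t)]^{\alpha/q}$. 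Since Lemma~\ref{1} is stated with the strict inequality $\underline{\lim}_{\zeta\to\partial\Omega}(u-v)(\zeta)>0$, I would first apply it to the pair $(u,v-\delta)$ and then let $\delta\downarrow0$, using monotone convergence of $\int_{\{u-v<-t-\delta\}}fdV$ together with monotone continuity of the capacity to recover the displayed inequality under the weaker hypothesis.

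I would then feed this into the De Giorgi lemma (Lemma~\ref{2.4}). Given $0<\beta<\frac1n\big(\frac2q-1\big)$, set $\alpha:=q(1+n\beta)$; since $q\in(1,2)$ (as $p>2$) and $\beta<\frac1n(\frac2q-1)$, one checks $\alpha\in(q,2)\subset(1,2)$, so Lemma~\ref{2} applies with this $\alpha$. Put $g(s):=[\phi(\epsilon+s)]^{1/n}$ for $s\ge0$; this is nonincreasing, right-continuous, tends to $0$, and taking $n$-th roots in the core estimate with $t$ replaced by $\epsilon+s$ yields
\[
t'\,g(s+t')\ \le\ D^{1/n}\,[g(s)]^{\tau},\qquad \tau:=\frac{\alpha}{q}=1+n\beta>1,
\]
for all $s,t'>0$. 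Lemma~\ref{2.4} then gives $g(S_{\infty})=0$ with
\[
S_{\infty}=\frac{2D^{1/n}\,g(0)^{\tau-1}}{1-2^{1-\tau}}=\frac{2D^{1/n}}{1-2^{1-\tau}}\,\big[cap(\{u-v<-\epsilon\},\Omega)\big]^{\beta},
\]
since $g(0)^{\tau-1}=[\phi(\epsilon)]^{(\tau-1)/n}=[\phi(\epsilon)]^{\beta}$.

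Finally I would convert $g(S_{\infty})=0$ into the statement. From $cap(\{u-v<-\epsilon-S_{\infty}\},\Omega)=0$ it follows that this set is Lebesgue-null (by Lemma~\ref{2} with $f\equiv1$, or by Lemma~3 of \cite{SM}), hence $u-v\ge-\epsilon-S_{\infty}$ a.e. in $\Omega$. As $u$ and $v$ are subharmonic, for any $q_{0}\in\Omega$ the sub-mean value inequality gives $v(q_{0})\le\frac1{|B_{r}(q_{0})|}\int_{B_{r}(q_{0})}v\le\epsilon+S_{\infty}+\frac1{|B_{r}(q_{0})|}\int_{B_{r}(q_{0})}u$; letting $r\downarrow0$ yields $v(q_{0})\le u(q_{0})+\epsilon+S_{\infty}$. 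Thus $\sup_{\Omega}(v-u)\le\epsilon+C\,[cap(\{u-v<-\epsilon\},\Omega)]^{\beta}$ with $C=\frac{2D^{1/n}}{1-2^{1-\tau}}$, a constant depending only on $\beta$ (equivalently $\alpha$), $p$, $n$ and $\|f\|_{L^{p}(\Omega)}$.

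I expect the main obstacle to be not a single deep step but the careful handling of the two monotonicity/continuity properties of the capacity — the right-continuity of $\phi$ and the $\delta\downarrow0$ approximation legitimizing the use of Lemma~\ref{1} under the non-strict boundary hypothesis — together with the bookkeeping that fixes the free exponent $\alpha$ so that $\tau-1=n\beta$ while keeping $\alpha$ in the admissible range $(1,2)$. Once these are arranged, the rest is a mechanical assembly of Lemmas~\ref{1}, \ref{2} and~\ref{2.4}.
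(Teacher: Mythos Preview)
Your proposal is correct and follows essentially the same route as the paper: define $g(s)=[cap(\{u-v<-\epsilon-s\})]^{1/n}$, combine Lemmas~\ref{1} and~\ref{2} to verify the iteration hypothesis of Lemma~\ref{2.4} with $\tau=1+n\beta$, and read off $S_{\infty}=C[cap(\{u-v<-\epsilon\})]^{\beta}$. In fact you are more careful than the paper's terse proof on three points it glosses over---the right-continuity of $\phi$, the passage from the strict boundary hypothesis of Lemma~\ref{1} to the nonstrict one here via a $\delta\downarrow0$ argument, and the upgrade from the a.e.\ inequality to the pointwise $\sup$ via the sub-mean value property.
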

\begin{proof}
By Lemmas \ref{1} et \ref{2}, the function
$ g(s):=[cap(\{u-v<-\epsilon-s\})]^{\frac{1}{n}}$ satisfies the conditions of Lemma \ref{2.4}, we obtain $cap(\{u-v<-s_{\infty}-\epsilon\})=0$ which means that $v-u\leq\epsilon+s_{\infty}$ almost everywhere on $\Omega.$ Finally, if we choose $\tau:=1+\beta n$ we obtain $\sup(v-u)\leq\epsilon+C[cap(\{u-v<-\epsilon\})]^{\beta}$ where \\ $C:=2B/(1-2^{-\beta n}).$
\end{proof}
We are now in the position to prove the main stability estimate, which is similar to Theorem 1.1 in \cite{GKZ} for the complex case.
\begin{thm}\label{4}
Let $u_{1},u_{2}\in PSH\cap L^{\infty}(\Omega)$ be such that $u_{1}\geq u_{2}$ on $\partial\Omega,$ and $0\leq f\in L^{p}(\Omega),$ $p>2.$ Suppose that $(\Delta u_{1})^{n}=fdV$ in $\Omega.$ Fix $r\geq1$ and $0<\gamma<\gamma_{r},$ with $\gamma_{r}=\frac{r}{nq+r+\frac{nq}{\frac{2}{q}-1}},$ $\frac{1}{p}+\frac{1}{q}=1.$ Then there exists a constant $C=C(\gamma,\|f\|_{L^{p}(\Omega)})>0$ such that $$ \sup_{\Omega}(u_{2}-u_{1})\leq C[\|(u_{2}-u_{1})_{+}\|_{L^{r}(\Omega)}]^{\gamma},$$ where $(u_{2}-u_{1})_{+}:=\max(u_{2}-u_{1},0).$
\end{thm}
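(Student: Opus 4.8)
The plan is to follow the strategy of Theorem 1.1 in \cite{GKZ}: convert the $L^r$-control of $(u_2-u_1)_+$ into a capacity estimate for sublevel sets, and then feed this into the comparison-type Proposition \ref{3}. First I would fix $\epsilon>0$ and examine the set $U(\epsilon):=\{u_1-u_2<-\epsilon\}$. On this set $(u_2-u_1)_+>\epsilon$, so by Chebyshev's inequality the Lebesgue volume satisfies $V(U(\epsilon))\le \epsilon^{-r}\|(u_2-u_1)_+\|_{L^r(\Omega)}^r$. The key is to upgrade this volume bound to a capacity bound: using Lemma 3 in \cite{SM} (the inequality $V(E)\le C(\alpha)[\mathrm{cap}(E)]^{?}$ invoked in Lemma \ref{2}) in the reverse direction is not what we want; instead I would bound $\mathrm{cap}(U(\epsilon))$ directly by exhibiting a competitor. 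A standard device is to compare with a slightly smaller sublevel set: for $0<\delta<\epsilon$, the relative extremal function of a compact $K\subset U(\epsilon)$ can be estimated against $\delta^{-1}(u_1-u_2+\epsilon)$ on $U(\epsilon-\delta)$, giving $\mathrm{cap}(U(\epsilon))\le \delta^{-n}\int_{U(\epsilon-\delta)}(\Delta u_1)^n = \delta^{-n}\int_{U(\epsilon-\delta)}f\,dV$, and then Lemma \ref{2} and the volume bound on $U(\epsilon-\delta)$ close the loop.

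Next I would run Proposition \ref{3} with the roles $u=u_1$, $v=u_2$: for any $0<\beta<\tfrac1n(\tfrac2q-1)$ and all $\epsilon>0$,
$$\sup_\Omega(u_2-u_1)\le \epsilon + C\,[\mathrm{cap}(\{u_1-u_2<-\epsilon\})]^{\beta}.$$
Plugging in the capacity estimate obtained above — which will have the schematic shape $\mathrm{cap}(U(\epsilon))\le C\,\epsilon^{-n-\text{(something)}}\,\|(u_2-u_1)_+\|_{L^r}^{\,\text{(something)}}$ after optimizing the auxiliary parameter $\delta$ (e.g. $\delta\sim\epsilon$) and the Hölder exponent $\alpha\in(1,2)$ in Lemma \ref{2} — yields an inequality of the form $\sup_\Omega(u_2-u_1)\le \epsilon + C\,\epsilon^{-A}\,M^{B}$ where $M:=\|(u_2-u_1)_+\|_{L^r(\Omega)}$ and $A,B>0$ depend on $n,q,r,\alpha,\beta$. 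Finally I would optimize over $\epsilon$: choosing $\epsilon = M^{\,B/(A+1)}$ balances the two terms and gives $\sup_\Omega(u_2-u_1)\le C\,M^{\,B/(A+1)}$. A careful bookkeeping of the exponents, letting $\beta\uparrow\tfrac1n(\tfrac2q-1)$ and $\alpha\uparrow 2$, should produce precisely the threshold $\gamma_r = \dfrac{r}{nq+r+\frac{nq}{\frac2q-1}}$, so that the estimate holds for every $0<\gamma<\gamma_r$.

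I expect the main obstacle to be the capacity estimate for the sublevel sets $\{u_1-u_2<-\epsilon\}$ in terms of the $L^r$-norm — getting the exponent of $\epsilon$ and of $M$ sharp, since this is what ultimately determines $\gamma_r$. The delicate point is that $u_2$ is merely bounded psh (not a solution), so one cannot use its Monge–Ampère mass; the argument must route everything through $(\Delta u_1)^n=f\,dV$ and the relative extremal function, exactly as in the complex case, and the quaternionic ingredients needed (comparison principle \cite[Theorem 1.2]{WZ}, the volume–capacity inequality of \cite{SM} used in Lemma \ref{2}, and quasicontinuity of psh functions with respect to $\mathrm{cap}(\cdot,\Omega)$) are all available. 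A secondary technical nuisance is the passage from "almost everywhere" bounds (as in Proposition \ref{3}) to the genuine supremum, which is handled by upper semicontinuity of $u_2-u_1$ together with the fact that sets of zero capacity are Lebesgue-null and pluripolar, so the essential supremum and the supremum coincide. Once the exponent arithmetic is organized, the optimization over $\epsilon$ is routine.
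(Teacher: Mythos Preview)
Your approach is correct and matches the paper's. The only streamlining is that the paper skips the volume/Chebyshev detour and any appeal to Lemma~\ref{2}: after Lemma~\ref{1} (exactly your competitor argument with $\delta=\epsilon$) gives $\mathrm{cap}(\{u_1-u_2<-2\epsilon\})\le \epsilon^{-n}\int_{\{u_1-u_2<-\epsilon\}}f\,dV$, the paper bounds this integral directly by $\epsilon^{-r/q}\|(u_2-u_1)_+\|_{L^r}^{r/q}\|f\|_{L^p}$ using $\mathbf{1}_{\{u_1-u_2<-\epsilon\}}\le \epsilon^{-r/q}(u_2-u_1)_+^{r/q}$ together with H\"older, then applies Proposition~\ref{3} and finishes by choosing $\epsilon=\|(u_2-u_1)_+\|_{L^r}^{\gamma}$ and $\beta=\dfrac{\gamma q}{r-\gamma(r+nq)}$; your volume route yields the same exponents, so nothing is lost.
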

\begin{proof}
Using Lemma \ref{1} with $s=t=\epsilon>0$ and by H\"{o}lder inequality, we obtain
\begin{eqnarray*}
  Cap(\{u_{1}-u_{2}<-2\epsilon\}) &\leq& \epsilon^{-n}\int_{\{u_{1}-u_{2}<-\epsilon\}}fdV \\
    &\leq& \epsilon^{-n-\frac{r}{q}}\int_{\Omega}(u_{2}-u_{1})_{+}^{\frac{r}{q}}fdV \\
    &\leq& \epsilon^{-n-\frac{r}{q}}\|(u_{2}-u_{1})_{+}\|_{L^{r}(\Omega)}^{\frac{r}{q}}\|f\|_{L^{p}(\Omega)}
\end{eqnarray*}
By Proposition \ref{3}, we get
$$ \sup_{\Omega}(u_{2}-u_{1})\leq 2\epsilon+C\epsilon^{-\beta(n+\frac{r}{q})}\|(u_{2}-u_{1})_{+}\|_{L^{r}(\Omega)}^{\beta\frac{r}{q}}\|f\|_{L^{p}(\Omega)}^{\beta}.$$
Fix $\gamma$ and set $\epsilon:=\|(u_{2}-u_{1})_{+}\|^{\gamma}_{L^{r}(\Omega)},$ we get
$$ \sup_{\Omega}(u_{2}-u_{1})\leq 2\|(u_{2}-u_{1})_{+}\|^{\gamma}_{L^{r}(\Omega)}+C\|(u_{2}-u_{1})_{+}\|_{L^{r}(\Omega)}^{-\gamma\beta(n+\frac{r}{q})+\beta\frac{r}{q}}\|f\|_{L^{p}(\Omega)}^{\beta}.$$
If we choose $\beta=\frac{\gamma q}{r-\gamma(r+nq)},$ we easily obtain the estimate of this Theorem.
\end{proof}

\section{H\"{o}lder continuous solutions to quaternionic Monge-Amp\`{e}re equations}\label{SDPPH}

For a fixed $\delta>0,$ we set $\Omega_{\delta}:=\{q\in\Omega/ dist(q,\partial\Omega)>\delta\};$\\
 $$ u_{\delta}(q):=\sup_{\|\zeta\|<\delta} u(q+\zeta),\ \ q\in\Omega_{\delta};$$
 and  $$ \widehat{u}_{\delta}(q):=\frac{1}{\tau_{4n}\delta^{4n}}\int_{|\zeta-q|\leq\delta}u(\zeta)dV_{4n}(\zeta), \ \ q\in\Omega_{\delta},$$
 where $\tau_{4n}$ is the volume of the unit ball in $\mathbb{H}^{n}.$\\ In the following result, we show the link between $u_{\delta}$ and $\widehat{u}_{\delta}.$
\begin{lem}\label{5}
Given $0<\beta<1,$ the following two conditions are equivalent:
\begin{enumerate}
  \item There exist $\eta_{1},A_{1}>0$ such that for any $0<\delta\leq\eta_{1}$ $$u_{\delta}-u\leq A_{1}\delta^{\beta}, \ \ on \ \ \Omega_{\delta}.$$
  \item There exist $\eta_{2},A_{2}>0$ such that for any $0<\delta\leq\eta_{2}$ $$\widehat{u}_{\delta}-u\leq A_{2}\delta^{\beta}, \ \ on \ \ \Omega_{\delta}.$$
\end{enumerate}
\end{lem}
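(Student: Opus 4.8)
The plan is to prove the two implications separately, exploiting the fact that $u_\delta$ is sandwiched between $\widehat u_\delta$ (up to a uniform normalizing constant) and a sup-convolution over a slightly larger ball. The implication $(1)\Rightarrow(2)$ is the easy direction: since $u$ is plurisubharmonic it is in particular subharmonic on each right quaternionic line, hence (by Remark \ref{rqe1} and property 5 of the Proposition on $\mathcal{PSH}$) subharmonic as a function on $\mathbb{R}^{4n}$; the sub-mean-value inequality then gives $u\le\widehat u_\delta$ pointwise on $\Omega_\delta$, while on the other hand $\widehat u_\delta(q)$ is an average of values $u(q+\zeta)$ with $\|\zeta\|\le\delta$, each of which is $\le u_\delta(q)$ by definition. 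Thus $0\le\widehat u_\delta-u\le u_\delta-u\le A_1\delta^\beta$ on $\Omega_\delta$, so $(2)$ holds with $\eta_2=\eta_1$, $A_2=A_1$.

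The implication $(2)\Rightarrow(1)$ is the substantive one. First I would record the elementary fact that for a subharmonic function on $\mathbb{R}^{4n}$ the spherical and ball averages over radius $\delta$ differ from $u$ in a comparable way; more precisely, iterating the ball-average and using a standard lemma (cf.\ the complex case in \cite{GKZ}), from $\widehat u_\delta-u\le A_2\delta^\beta$ one deduces a comparable bound for averages over all smaller radii, and in particular an estimate of the form $\widehat u_{2\delta}-\widehat u_\delta\le C A_2\delta^\beta$. The key geometric point is then a submean-value estimate that controls the sup-convolution by an average over a doubled ball: for $q\in\Omega_{2\delta}$ and $\|\zeta\|<\delta$, applying the sub-mean value inequality to $u$ on the ball $B(q+\zeta,\delta)\subset B(q,2\delta)$ and comparing normalizing constants yields $u(q+\zeta)\le C\,\widehat u_{2\delta}(q)$ is too crude; instead one uses that $u(q+\zeta)\le \widehat u_\delta(q+\zeta)$ and that $q\mapsto\widehat u_\delta(q)$ has the same type of modulus bound, so that $\widehat u_\delta(q+\zeta)-\widehat u_\delta(q)$ is controlled by iterating $(2)$ at the next dyadic scale. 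Combining these gives $u_\delta(q)-u(q)\le \big(u_\delta(q)-\widehat u_\delta(q)\big)+\big(\widehat u_\delta(q)-u(q)\big)\le C A_2\delta^\beta$ on $\Omega_{2\delta}$, which is condition $(1)$ after replacing $\delta$ by $\delta/2$ and adjusting the constants and the radius $\eta_1$.

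I would organize the write-up as: (i) reduce everything to subharmonicity on $\mathbb{R}^{4n}$ via Remark \ref{rqe1}, so that classical potential theory of averages applies verbatim; (ii) prove the chain of inequalities $u\le\widehat u_\delta\le u_\delta$ and a doubling estimate $\widehat u_\delta-u\le C(\widehat u_{2\delta}-u)$ for subharmonic functions; (iii) prove the reverse control $u_\delta-\widehat u_\delta\le C(\widehat u_{2\delta}-\widehat u_\delta)$ by the ball-comparison argument sketched above; (iv) assemble. The main obstacle is step (iii): making the comparison between the sup over $B(q,\delta)$ and the ball average over $B(q,2\delta)$ quantitatively sharp enough to keep the same Hölder exponent $\beta$, rather than losing it. This is exactly the point where one uses that, for subharmonic $u$, the function $r\mapsto \fint_{B(q,r)}u$ is nondecreasing and its increments are comparable to $\widehat u_{2\delta}-\widehat u_\delta$, together with the subadditivity/monotonicity of a modulus of continuity (Lemma \ref{lem1}); these are precisely the tools borrowed from the proof of the analogous lemma in \cite{GKZ}, so the argument runs in parallel to the complex case once subharmonicity on $\mathbb{R}^{4n}$ is invoked.
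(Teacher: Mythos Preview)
Your direction $(1)\Rightarrow(2)$ is correct and is exactly the easy direction: since quaternionic plurisubharmonic functions are subharmonic on $\mathbb{R}^{4n}$, the sub-mean-value inequality gives $u\le\widehat u_\delta\le u_\delta$, and the implication follows with $A_2=A_1$, $\eta_2=\eta_1$. This matches the paper, which simply refers to Lemma~4.2 of \cite{GKZ}.

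The direction $(2)\Rightarrow(1)$, however, has a genuine gap at your step~(iii). The inequality you propose there,
\[
u_\delta-\widehat u_\delta\;\le\;C\bigl(\widehat u_{2\delta}-\widehat u_\delta\bigr),
\]
is \emph{false} in general. Take any nonconstant harmonic function $u$ on $\Omega$: then $\widehat u_\delta=\widehat u_{2\delta}=u$ by the mean-value property, so the right-hand side vanishes identically, while $u_\delta-u>0$ somewhere. Your surrounding heuristic (``$\widehat u_\delta(q+\zeta)-\widehat u_\delta(q)$ is controlled by iterating $(2)$ at the next dyadic scale'') does not repair this, because condition $(2)$ bounds $\widehat u_\delta-u$ at each \emph{fixed} centre, not the spatial variation of $\widehat u_\delta$; for harmonic $u$ condition $(2)$ is vacuous and gives no control whatsoever on $\widehat u_\delta(q+\zeta)-\widehat u_\delta(q)$. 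In fact the harmonic example can be pushed further: if $u$ is the Poisson extension to the ball of a bounded continuous boundary datum that is not H\"older continuous, then $u\in PSH\cap L^\infty$, condition $(2)$ holds trivially, yet $u_\delta-u$ is not $O(\delta^\beta)$ uniformly on $\Omega_\delta$ for any $\beta>0$. So no purely interior argument of the type you sketch can succeed.

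The paper gives no independent proof here; it just cites \cite{GKZ}. You should therefore consult that reference directly rather than reconstruct the argument, and pay attention to the precise hypotheses under which the equivalence is asserted. Note also that in the only place the paper \emph{uses} the implication $(2)\Rightarrow(1)$ (the end of the proof of Theorem~\ref{8}), the boundary estimate $u_\delta\le u+c_0\delta^\beta$ on $\partial\Omega_\delta$ is already available from Proposition~\ref{7}; this extra input rules out the harmonic counterexamples above and is likely what makes the passage from $(2)$ to $(1)$ legitimate in that context.
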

\begin{proof}
This result is proved in \cite{GKZ} for the complex case, we will follow the same proof of Lemma 4.2 in \cite{GKZ}.
\end{proof}
The content of our next result (Lemma \ref{6}) is to control the growth of $\|u_{\delta}-u\|_{L^{2}(\Omega_{\delta})}$ and $\|\widehat{u}_{\delta}-u\|_{L^{1}(\Omega_{\delta})},$ but before we are in need of this following lemma.
\begin{lem}\label{5.1}
Suppose that $\Omega$ is a domain, $a\in\Omega,$ $B(a,r)\Subset\Omega,$ and $u$ is a psh function. Then for $r>0,$ $q\in\mathbb{H}^{n},$
$$\int_{B(a,r)}\Delta u\wedge\Delta(\frac{-1}{\|q-a\|^{2}})^{n-1}=\frac{1}{r^{4n-4}}\int_{B(a,r)}\Delta u\wedge\beta_{n}^{n-1}.$$
\end{lem}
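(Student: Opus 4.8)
The plan is to prove the identity $\int_{B(a,r)}\Delta u\wedge\Delta(-1/\|q-a\|^2)^{n-1}=r^{-(4n-4)}\int_{B(a,r)}\Delta u\wedge\beta_n^{n-1}$ by a Stokes-type comparison between the two $(2n-2)$-forms $\Delta(-1/\|q-a\|^2)^{n-1}$ and $r^{-(4n-4)}\beta_n^{n-1}$ on the sphere $\partial B(a,r)$. Without loss of generality translate so that $a=0$, and write $\rho(q):=\|q\|^2$, so the kernel is $-1/\rho$. The key point is the elementary computation that $\Delta(-1/\rho)^{n-1}$ and $\rho^{-(2n-2)}\beta_n^{n-1}$ agree when restricted (as forms contracted against $\Delta u$) on the level set $\{\rho=r^2\}$, or more precisely that their difference is $D$-exact there. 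This mirrors exactly the classical complex computation with $\log\|z\|$ / $\|z\|^{-2(n-1)}$ kernels used to derive the Lelong–Jensen formula.

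Concretely, I would proceed as follows. First, since $-1/\rho$ is smooth and psh away from the origin and $u$ may only be a bounded psh function, I would regularize: replace $u$ by its standard smoothing $u_\varepsilon=u*\chi_\varepsilon$ (plurisubharmonic on $\Omega_\varepsilon$ by Proposition (iii)), prove the identity for the smooth $u_\varepsilon$, and then pass to the limit using the weak continuity of $\Delta u_\varepsilon\to\Delta u$ as closed positive currents (Bedford–Taylor theory in the quaternionic setting, as recalled in the excerpt) together with the fact that the test coefficients on $B(0,r)$ and on $\partial B(0,r)$ are fixed smooth objects. A minor technical point is that $-1/\rho$ blows up at $0$; I would handle this by working on the annulus $B(0,r)\setminus \overline{B(0,s)}$ and letting $s\to 0$, checking that the boundary contribution on $\{\rho=s^2\}$ tends to zero — this uses that $\Delta u$ has locally bounded mass and that the kernel's boundary form is $O(s^{-(2n-2)})$ against a sphere of $(4n-1)$-measure $O(s^{4n-1})$, giving $O(s)\to 0$. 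Actually the cleaner route, since $-1/\rho\le 0$ and increases to $-1/r^2$ is bounded on the annulus away from $0$, is to write everything via the Stokes formula of Lemma \ref{200}: both sides equal $\int_{B(0,r)}\Delta u\wedge\Delta w$ for appropriate $w$ with $w=0$ on $\partial B(0,r)$, namely $w$ built from $(-1/\rho)^{n-1}$ minus its boundary value, integrated by parts twice against $\Delta u$.

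The heart of the matter is the pointwise/radial identity for the kernels. Here I would use the explicit coefficient formula from the Preliminaries: $\Delta v=\sum_{i,j}\Delta_{ij}v\,\omega^i\wedge\omega^j$ with $\Delta_{ij}v=\tfrac12(\nabla_{i0}\nabla_{j1}v-\nabla_{i1}\nabla_{j0}v)$, compute $\Delta_{ij}$ of $\rho=\sum x_k^2$ (which gives a constant multiple of the "identity" form, i.e. $\Delta\rho=c\,\beta_n$ for the standard $\beta_n$), and then compute $\Delta$ of $(-1/\rho)^{n-1}$ by the chain/Leibniz rules (\ref{eq3}), getting a combination of $\rho^{-n}\,\beta_n^{n-1}$ and $\rho^{-n-1}\,(d_0\rho\wedge d_1\rho)\wedge\beta_n^{n-2}$. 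The radial terms $d_0\rho\wedge d_1\rho$ are exactly what vanish, or become a multiple of $\beta_n$, once one intersects with the sphere $\{\rho=r^2\}$ — equivalently, the coefficient of $(-1/\rho)^{n-1}$ is designed so that $\Delta(-1/\rho)^{n-1}$ is the unique (up to the stated constant) closed positive $(2n-2)$-form homogeneous of the right degree, and its restriction to spheres is rotation-invariant, hence proportional to $\beta_n^{n-1}$ with the proportionality constant $r^{-(4n-4)}$ pinned down by homogeneity (degree $-2(n-1)$ in $\|q\|$, and $\beta_n^{n-1}$ has degree $0$, so the constant on $\{\|q\|=r\}$ is $r^{-2(n-1)}=r^{-(4n-4)}$... wait, $r^{-2(2n-2)}$; I will recheck the exponent against the normalization of $\beta_n$, but the bookkeeping is routine). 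The main obstacle is this constant-tracking: getting the combinatorial factor in $\Delta(-1/\rho^{n-1})$ exactly right and verifying that the non-radial term genuinely drops out after the double integration by parts, rather than merely after restriction. I expect to resolve it by the same device as in the complex Lelong–Jensen proof: show $\Delta(-1/\rho)^{n-1}-r^{-(4n-4)}\beta_n^{n-1}=D\sigma$ on a neighborhood of $\partial B(0,r)$ for an explicit $(2n-3)$-current $\sigma$, so that Stokes (Lemma \ref{200}) kills the difference since $\Delta u$ is $D$-closed.
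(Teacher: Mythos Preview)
Your proposal is an outline rather than a proof: the central computation --- establishing that $(\Delta(-1/\rho))^{n-1}$ and $r^{-(4n-4)}\beta_n^{n-1}$ agree (up to a $D$-exact term) on the sphere --- is never carried out, and you explicitly say you ``will recheck the exponent'' and ``expect to resolve'' the non-radial term. That is the whole content of the lemma. Note also that your homogeneity count is off: $\Delta(-1/\rho)$ is homogeneous of degree $-4$ in $\|q\|$, so $(\Delta(-1/\rho))^{n-1}$ scales like $s^{-(4n-4)}$, not $s^{-(2n-2)}$; the right-hand side of the identity already tells you this.

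There is a more serious gap at the singularity. Your dimension count for why the inner boundary term at $\|q-a\|=s$ vanishes uses only that $\Delta u$ has locally finite mass. But with the correct scaling the inner contribution is precisely $s^{-(4n-4)}\int_{B(a,s)}\Delta u\wedge\beta_n^{n-1}$, whose limit as $s\to 0$ is the (quaternionic) Lelong number $\nu_u(a)$; this is \emph{not} zero for a general psh function, and vanishes only because $u$ is locally bounded --- an assumption you never invoke. You also do not address whether the current $\Delta u\wedge(\Delta(-1/\rho))^{n-1}$ might carry mass on the singleton $\{a\}$, which your annulus argument misses entirely.

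The paper proceeds differently and avoids the kernel computation you are attempting. It quotes the annulus identity directly from \cite[Proposition~4.2]{WW}, which already gives
\[
\int_{B(a,r)\setminus\overline{B}(a,s)}\Delta u\wedge(\Delta(-1/\rho))^{n-1}
=\frac{1}{r^{4n-4}}\int_{B(a,r)}\Delta u\wedge\beta_n^{n-1}
-\frac{1}{s^{4n-4}}\int_{B(a,s)}\Delta u\wedge\beta_n^{n-1},
\]
and then concentrates on the two issues at $a$: (i) it shows by an explicit estimate with the regularized kernel $-1/(\|q-a\|^2+\epsilon)$ that $\int_{B(a,s)}\Delta u\wedge(\Delta(-1/\rho))^{n-1}\lesssim s^4\to 0$, so the measure has no mass at $\{a\}$; and (ii) it uses that $u$ is locally bounded to conclude $\nu_u(a)=0$, killing the second term above as $s\to 0$. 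Your approach, even if the Stokes/$D$-exactness step were written out, would still require exactly these two ingredients, and at present handles neither.
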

\begin{proof}
First, we are going to prove that $$\int_{\{a\}}\Delta u\wedge(\Delta(\frac{-1}{\|q-a\|^{2}})^{n-1}=0.$$
It follows from the proof of Proposition 4.1 in \cite{WW}, and by lemma 4.1 in \cite{WW} for $\frac{-1}{\|q-a\|^{2}+\epsilon},$ we get
$$\Delta u\wedge\Delta(\frac{-1}{\|q-a\|^{2}})^{n-1}=\sum_{i_{1}j_{1}\ldots i_{n}j_{n}}\delta_{01\ldots(2n-1)}^{i_{1}j_{1}\ldots i_{n}j_{n}}\Delta_{i_{1}j_{1}}u\Delta_{i_{2}j_{2}}(\frac{-1}{\|q-a\|^{2}+\epsilon})\ldots\Delta_{i_{n}j_{n}}(\frac{-1}{\|q-a\|^{2}+\epsilon}) \Omega_{2n}$$
$$=(\frac{-4}{(\|q-a\|^{2}+\epsilon)^{3}})^{n-1}\sum_{i_{1}j_{1}\ldots i_{n}j_{n}}\delta_{01\ldots(2n-1)}^{i_{1}j_{1} \ldots i_{n}j_{n}}\Delta_{i_{1}j_{1}}u(\overline{M}_{i_{2}j_{2}}-\sum_{k_{2}}\delta_{(2k_{2})(2k_{2}+1)}^{i_{2}j_{2}}(\|q-a\|^{2}+\epsilon))\ldots$$ \\ $\ldots(\overline{M}_{i_{n}j_{n}}-\sum_{k_{n}}\delta_{(2k_{n})(2k_{n}+1)}^{i_{n}j_{n}}(\|q-a\|^{2}+\epsilon))\Omega_{2n}$
$$=(\frac{-4}{(\|q-a\|^{2}+\epsilon)^{3}})^{n-1}[\sum_{k_{1}\ldots k_{n}}2^{n}\delta_{01\ldots(2n-1)}^{(2k_{1})(2k_{1}+1)\ldots(2k_{n})(2k_{n}+1)}\Delta_{(2k_{1})(2k_{1}+1)}u.(-\|q-a\|^{2}-\epsilon)^{n-1}+ $$
$$ \sum_{i_{1}j_{1}i_{2}j_{2}k_{3}\ldots k_{n}}2^{n-2}\delta_{01\ldots(2n-1)}^{i_{1}j_{1} i_{2}j_{2}(2k_{3})(2k_{3}+1)\ldots(2k_{n})(2k_{n}+1)}\Delta_{i_{1}j_{1}}u.(\overline{M}_{i_{2}j_{2}}(-\|q-a\|^{2}-\epsilon)^{n-2}+\ldots+$$
$$+\sum_{i_{1}j_{1}i_{2}j_{2}\ldots}\delta_{01\ldots(2n-1)}^{i_{1}j_{1}\ldots i_{n}j_{n}}\Delta_{i_{1}j_{1}}u.\overline{M}_{i_{2}j_{2}}\ldots\overline{M}_{i_{n}j_{n}}]\Omega_{2n}.$$
Note that in the right hand side above, except for the first two sums, all other sums vanish by simple computation, (for mor details see proof of proposition 4.1 in \cite{WW}).\\
$u$ is a locally bounded psh function on $\Omega,$ so there exists $C>0$ such that $\|\Delta_{ij}u\|_{L^{\infty}(\Omega)}\leq C$ for all $i,j,$ and there exists $C'>0$ such that $\|\Delta_{(2k)(2k+1)}u\|_{L^{\infty}(\Omega)}\leq C'$ for all $k.$
Then, by straightforward computation we get
$$\int_{B(a,s)}\sum_{k_{1}\ldots k_{n}}2^{n}\delta_{01\ldots(2n-1)}^{(2k_{1})(2k_{1}+1)\ldots(2k_{n})(2k_{n}+1)}\Delta_{(2k_{1})(2k_{1}+1)}u.(-\|q-a\|^{2}-\epsilon)^{n-1}dV\leq 2^{n}n!C'\int_{B(a,s)}(-\|q-a\|^{2}-\epsilon)^{n-1}dV,$$
and for $C>0$ large enough, we have
$$\int_{B(a,s)}\sum_{i_{1}j_{1}i_{2}j_{2}k_{3}\ldots k_{n}}2^{n-2}\delta_{01\ldots(2n-1)}^{i_{1}j_{1} i_{2}j_{2}(2k_{3})(2k_{3}+1)\ldots(2k_{n})(2k_{n}+1)}\Delta_{i_{1}j_{1}}u.\overline{M}_{i_{2}j_{2}}(-\|q-a\|^{2}-\epsilon)^{n-2}dV$$
$$\leq\int_{B(a,s)}\sum_{k_{1}\ldots k_{n}}2^{n}C\delta_{01\ldots (2n-1)}^{(2k_{1})(2k_{1}+1)\ldots}\overline{M}_{(2k_{1})(2k_{1}+1)}(-\|q-a\|^{2}-\epsilon)^{n-2}dV=2^{n}n!C\int_{B(a,s)}\|q-a\|^{2}(-\|q-a\|^{2}-\epsilon)^{n-2}dV$$
by the fact that $\|q-a\|^{2}=\sum_{k=0}^{n}M_{(2k)(2k+1)}.$
So by simple computation, we get
$$ \int_{B(a,s)}\Delta u\wedge(\Delta(\frac{-1}{\|q-a\|^{2}+\epsilon}))^{n-1}dV\leq C'\int_{B(a,s)}\frac{8^{n}n!}{(\|q-a\|^{2}+\epsilon)^{2n-2}}dV.$$
Then\begin{eqnarray*}
      \int_{\|q-a\|<s}(\Delta u\wedge(\Delta(\frac{-1}{\|q-a\|^{2}}))^{n-1})dV &=& \lim_{\epsilon\longrightarrow0}\int_{\|q-a\|<s}(\Delta u\wedge(\Delta(\frac{-1}{\|q-a\|^{2}+\epsilon}))^{n-1})dV \\
        &\leq& \lim_{\epsilon\longrightarrow0} S_{4n}C'\int_{0}^{s}\frac{8^{n}n!t^{4n-1}}{(\epsilon+t^{2})^{2n-2}}dt \\
        &=& S_{4n}C'\int_{0}^{s}\frac{8^{n}n!t^{4n-1}}{t^{4n-4}}dt\\
        &=& S_{4n}C'8^{n}n!\int_{0}^{s}t^{3}dt=S_{4n}C'\frac{8^{n}n!}{4}s^{4}
    \end{eqnarray*}
    So $$\int_{\{a\}}\Delta u\wedge(\Delta(\frac{-1}{\|q-a\|^{2}}))^{n-1}\leq\lim_{s\longrightarrow0}S_{4n}C'\frac{8^{n}n!}{4}s^{4}=0,$$
then $$\int_{\{a\}}\Delta u\wedge(\Delta(\frac{-1}{\|q-a\|^{2}}))^{n-1}=0.$$
On the other hand, by proposition 4.2 in \cite{WW}, we have for $0<s<r,$
$$ \int_{B(a,r)\setminus \overline{B}(a,s)}\Delta u\wedge(\Delta(\frac{-1}{\|q-a\|^{2}+\epsilon}))^{n-1}=\frac{1}{r^{4n-4}}\int_{B(a,r)}\Delta u\wedge\beta_{n}^{n-1}-\frac{1}{s^{4n-4}}\int_{B(a,s)}\Delta u\wedge\beta_{n}^{n-1}$$
tend $s$ to $0,$ we get
$$ \int_{B(a,r)\setminus\{a\}}\Delta u\wedge(\Delta(\frac{-1}{\|q-a\|^{2}+\epsilon}))^{n-1}=\frac{1}{r^{4n-4}}\int_{B(a,r)}\Delta u\wedge\beta_{n}^{n-1}-\nu_{u}(a),$$ where $\nu_{u}(a)$ is the Lelong number of $u$ at point $a.$
Since $u$ is bounded function, $\nu_{u}(a)=0.$ So by the first part of this proof, we have
\begin{eqnarray*}
  \int_{B(a,r)}\Delta u\wedge(\Delta(\frac{-1}{\|q-a\|^{2}}))^{n-1} &=& \int_{B(a,r)\setminus\{a\}}\Delta u\wedge(\Delta(\frac{-1}{\|q-a\|^{2}}))^{n-1}+\int_{\{a\}}\Delta u\wedge(\Delta(\frac{-1}{\|q-a\|^{2}}))^{n-1} \\
    &=&\frac{1}{r^{4n-4}}\int_{B(a,r)}\Delta u\wedge\beta_{n}^{n-1}.
\end{eqnarray*}
\end{proof}
\begin{lem}\label{6}
\begin{enumerate}
  \item Assume that $\nabla u\in L^{2}(\Omega).$ Then for $\delta>0$ small enough, we have $$ \int_{\Omega_{\delta}}|u_{\delta}(q)-u(q)|^{2}dV_{4n}(q)\leq C_{n}\|\nabla u\|^{2}_{L^{2}(\Omega)}\delta^{2},$$
  \item Assume that $\|\Delta u\|_{\Omega}<+\infty.$ Then for $\delta>0$ small enough, we have $$ \int_{\Omega_{\delta}}[\widehat{u}_{\delta}(q)-u(q)]dV_{4n}(q)\leq C_{n}\|\Delta_{\mathbb{H}^{n}} u\|_{\Omega}\delta^{2},$$
\end{enumerate}
where $\Delta u\wedge\beta_{n}^{n-1}=\Delta_{\mathbb{H}^{n}}u\Omega_{2n},$ and  $C_{n}>0$ is a constant depends only on $n.$
\end{lem}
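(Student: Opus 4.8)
The plan is to reduce both statements to the classical sub-mean-value inequality for subharmonic functions together with an integration against a suitable radial test weight, mimicking the complex-variable argument of \cite{GKZ} but using the quaternionic Stokes formula (Lemma \ref{200}) and the radial computation of Lemma \ref{5.1}. For part (1), I would first treat the case $u\in C^{2}$: for fixed $q\in\Omega_{\delta}$ write $u_{\delta}(q)-u(q)=\sup_{\|\zeta\|<\delta}\bigl(u(q+\zeta)-u(q)\bigr)$ and estimate each difference by the fundamental theorem of calculus along the segment $[q,q+\zeta]$, so that $|u(q+\zeta)-u(q)|\le\int_{0}^{1}|\nabla u(q+s\zeta)|\,|\zeta|\,ds\le\delta\int_{0}^{1}|\nabla u(q+s\zeta)|\,ds$. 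Squaring, using Cauchy--Schwarz in $s$, integrating in $q$ over $\Omega_{\delta}$, and using Fubini together with translation invariance of Lebesgue measure gives $\int_{\Omega_{\delta}}|u_{\delta}-u|^{2}\le\delta^{2}\int_{\Omega}|\nabla u|^{2}$; the general psh case follows by applying this to the regularizations $u_{\epsilon}=u\ast\chi_{\epsilon}$ (item (3) of the Proposition in Section \ref{11111}), noting $\nabla u_{\epsilon}\to\nabla u$ in $L^{2}_{\mathrm{loc}}$ and $u_{\epsilon}\downarrow u$, and passing to the limit by monotone/dominated convergence. One has to be a little careful that $u_{\delta}$ and $(u_{\epsilon})_{\delta}$ are compared on slightly shrunk domains, but this only costs an arbitrarily small amount of room and is handled by first proving the bound on $\Omega_{\delta+\epsilon}$ and then letting $\epsilon\to 0$.

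For part (2), the key identity is the quaternionic analogue of the Riesz/Jensen formula: for $a\in\Omega_{\delta}$,
$$\widehat{u}_{\delta}(a)-u(a)=c_{n}\int_{B(a,\delta)}\Bigl(\tfrac{1}{\delta^{4n-4}}-\tfrac{1}{\|q-a\|^{4n-4}}\Bigr)\Delta_{\mathbb{H}^{n}}u(q)\,dV(q),$$
or more precisely a representation of the spherical/solid average of $u$ minus $u(a)$ as an integral of the Green-type potential against the Riesz measure $\Delta_{\mathbb{H}^{n}}u$. Since in $\mathbb{R}^{4n}$ a psh (hence subharmonic) function satisfies exactly such a formula with the classical Newtonian kernel, and since Lemma \ref{5.1} identifies $\int_{B(a,r)}\Delta u\wedge(\Delta(-\|q-a\|^{-2}))^{n-1}$ with $r^{-(4n-4)}\int_{B(a,r)}\Delta u\wedge\beta_{n}^{n-1}$, I would: (i) establish the pointwise formula for $\widehat{u}_{\delta}(a)-u(a)$ with a nonnegative kernel $G_{\delta}(q,a)\ge 0$ supported in $B(a,\delta)$ satisfying $0\le G_{\delta}(q,a)\le C_{n}\delta^{2}/|B(a,\delta)|$ in an averaged sense — concretely $\int_{B(a,\delta)}G_{\delta}(q,a)\,d(\Delta_{\mathbb{H}^{n}}u)(q)\le C_{n}\delta^{2}\,\Delta_{\mathbb{H}^{n}}u\bigl(B(a,\delta)\bigr)$ after the radial integration; (ii) integrate in $a$ over $\Omega_{\delta}$ and apply Fubini, so that $\int_{\Omega_{\delta}}(\widehat{u}_{\delta}-u)\,dV\le C_{n}\delta^{2}\int_{\Omega}d(\Delta_{\mathbb{H}^{n}}u)=C_{n}\delta^{2}\|\Delta_{\mathbb{H}^{n}}u\|_{\Omega}$, the interchange being legitimate since everything is nonnegative. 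Again the $C^{2}$ case is done first, using Lemma \ref{200} and Lemma \ref{5.1} directly, and the general case by regularization, with $\Delta u_{\epsilon}\rightharpoonup\Delta u$ weakly and lower semicontinuity of mass giving the bound in the limit.

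The main obstacle I anticipate is item (i) of part (2): pinning down the precise Green/Jensen representation for the solid average $\widehat{u}_{\delta}$ in the quaternionic framework and verifying that the relevant kernel is genuinely nonnegative and has the claimed $O(\delta^{2})$ bound after integrating against the radial profile. In the complex case this is standard via $dd^{c}$ and the Lelong--Jensen formula; here one must instead assemble it from the Stokes-type formula (Lemma \ref{200}), the computation in Lemma \ref{5.1}, and the vanishing of the Lelong number $\nu_{u}(a)=0$ for bounded $u$ (which is exactly what makes the $s\to 0$ boundary term in Lemma \ref{5.1} disappear). Once the representation is in hand, the radial integral $\int_{0}^{\delta}(\delta^{-(4n-4)}-t^{-(4n-4)})\,t^{4n-1}\,dt=O(\delta^{4})$ combined with $t^{4n-1}\,dt\asymp$ surface-area element yields the factor $\delta^{2}$ relative to the total mass after normalizing by $|B(a,\delta)|\asymp\delta^{4n}$, and the rest is routine Fubini bookkeeping.
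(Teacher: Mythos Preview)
Your proposal is correct and follows essentially the same route as the paper. For part~(1) the paper simply refers to the proof of Theorem~3.1 in \cite{GKZ}, which is precisely the segment--FTC argument you describe; for part~(2) the paper invokes the ready-made quaternionic Lelong--Jensen formula (Theorem~5.1 in \cite{WW}) together with Lemma~\ref{5.1} to obtain the spherical-mean representation, then integrates in polar coordinates and applies Fubini, exactly as you outline---the only difference being that you plan to assemble the Jensen formula from Lemma~\ref{200} and Lemma~\ref{5.1} rather than citing it off the shelf.
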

\begin{proof}
For 1) see the last part in the proof of Theorem 3.1 in \cite{GKZ}.\\
2) It follows from Lelong-Jensen type formula (Theorem 5.1 in \cite{WW}) and lemma \ref{5.1}, that for $q\in\Omega_{\delta},$ $0<r<\delta,$ $r'=\frac{-1}{r^{2}},$ $\varphi(\xi)=\frac{-A}{\|\xi-q\|^{2}}$ where $A=(\frac{(2n)!}{8^{n}n!\pi^{2n}})^{\frac{1}{n}},$ and $B_{\varphi}(r')=\{\xi\in\Omega, \varphi(\xi)\leq r'\}.$
$$ \frac{1}{\sigma_{4n-1}}\int_{|\xi|=1}u(q+r\xi)dS_{4n-1}=u(q)+\int_{-\infty}^{r'}t^{2n-2}\int_{\frac{-1}{\|\xi-q\|^{2}}\leq t}\Delta u\wedge\beta^{n-1}_{n}dt$$
Using polar coordinates we get, for $q\in\Omega_{\delta}$
$$ \widehat{u}(q)-u(q)= \frac{1}{\sigma_{4n-1}\delta^{4n}}\int_{0}^{\delta}r^{4n-1}dr\int_{0}^{(\frac{-1}{r'})^{\frac{1}{2}}}s^{1-4n}(\int_{\|\xi-q\|\leq s}\Delta u\wedge\beta_{n}^{n-1})ds$$
So, by Fubini's theorem we have
\begin{eqnarray*}
  \int_{\Omega_{\delta}}(\widehat{u}-u)dV &\leq& a_{n}\delta^{-4n}\int_{0}^{\delta}r^{4n-1}dr\int_{0}^{r}s^{1-4n}(\int_{\|\xi-q\|\leq s}(\int_{\Omega}\Delta_{\mathbb{H}^{n}}u))ds \\
    &\leq& C_{n}\delta^{2}\|\Delta_{\mathbb{H}^{n}}u\|.
\end{eqnarray*}
\end{proof}
For giving us the H\"{o}lder norm estimate in $\overline{\Omega}$ of the solution $u,$ we need to apply the stability estimate with $u_{2}:=u_{\delta}.$ And in order to do that, we have to extend $u_{\delta}$ to $\Omega,$ since it is only defined on $\Omega_{\delta}.$
\begin{pro}\label{7}
Let $u\in PSH(\Omega)\cap L^{\infty}(\Omega)$ such that $u=\psi\in Lip_{2\beta}(\partial\Omega)$ on $\partial\Omega.$ Then there exist a constant $c_{0}=c_{0}(u)>0$ and $\delta_{0}$ small enough such that for any $0<\delta<\delta_{0}$ the function
$$\widetilde{u}_{\delta}=\left\{\begin{array}{ll}
                       \max\{u_{\delta},u+c_{0}\delta^{\beta}\} & \hbox{in $\Omega_{\delta}$;} \\
                       u+c_{0}\delta^{\beta}, &\hbox{in $\Omega\backslash\Omega_{\delta}$.}
                     \end{array}
                   \right.$$
is a bounded plurisubharmonic function on $\Omega$ and $(\widetilde{u}_{\delta})$ decreases to $u$ as $\delta$ decrease to $0.$
\end{pro}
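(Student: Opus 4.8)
The plan is to verify three things in turn: that $\widetilde u_\delta$ is plurisubharmonic on $\Omega$, that it is bounded, and that the family decreases to $u$ as $\delta\downarrow 0$. The gluing of two psh functions is governed by Proposition (part 6) in Section~\ref{11111}, so the main point is to check that the two defining pieces agree near the seam $\partial\Omega_\delta\cap\Omega$, i.e.\ that $u_\delta \le u+c_0\delta^\beta$ there, so that the maximum is attained by the lower branch $u+c_0\delta^\beta$ on a neighborhood of the seam and the glued function is genuinely psh across it.

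\medskip
\noindent\emph{Step 1: boundary behavior near $\partial\Omega_\delta$.} First I would record that $u_\delta$ is psh on $\Omega_\delta$ (it is the supremum over a family of translates of a psh function, taken over the ball $\|\zeta\|<\delta$, and its upper semicontinuous regularization coincides with it because $u$ is psh and bounded; this is property~3 and property~4 in the Proposition of Section~\ref{11111} together with the locally-bounded hypothesis). Now I use that $u=\psi\in \mathrm{Lip}_{2\beta}(\partial\Omega)$, plus the fact that $u$ is continuous up to $\partial\Omega$: for $q$ with $\mathrm{dist}(q,\partial\Omega)$ between $\delta$ and, say, $2\delta$, and any $\|\zeta\|<\delta$, the point $q+\zeta$ is within $\mathrm{dist}\lesssim\delta$ of a boundary point, and one controls $u(q+\zeta)-u(q)$ by the modulus of continuity of $u$ near $\partial\Omega$. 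Using the barrier/modulus estimate available for this solution (Corollary~\ref{cor2}, or directly the $C^{1,1}$--boundary-data hypothesis giving $u\in C^{0,\beta}$ in a collar), there is a constant $c_0=c_0(u)>0$ and $\delta_0>0$ such that
$$ u_\delta(q) \le u(q) + c_0\,\delta^\beta \qquad \text{whenever } \delta \le \mathrm{dist}(q,\partial\Omega)\le 2\delta,\ 0<\delta<\delta_0. $$
Hence on the collar $\{\,\delta\le \mathrm{dist}(q,\partial\Omega)\le 2\delta\,\}$ the function $\widetilde u_\delta$ equals $u+c_0\delta^\beta$, so it coincides with a psh function on a full neighborhood of $\partial\Omega_\delta\cap\Omega$ from both sides; by part~6 of the Proposition of Section~\ref{11111} (the gluing lemma, applied with $U=\Omega_\delta$), $\widetilde u_\delta\in PSH(\Omega)$.

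\medskip
\noindent\emph{Step 2: boundedness.} Since $u\in L^\infty(\Omega)$ and $u_\delta \le \sup_{\Omega} u < \infty$ on $\Omega_\delta$, while $u+c_0\delta^\beta$ is bounded, $\widetilde u_\delta$ is bounded on $\Omega$ (bounded above by $\sup_\Omega u + c_0\delta_0^\beta$ and below by $\inf_\Omega u + c_0\delta^\beta \ge \inf_\Omega u$).

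\medskip
\noindent\emph{Step 3: monotone decrease to $u$.} From $u\le u_\delta$ and $u\le u+c_0\delta^\beta$ one gets $u\le \widetilde u_\delta$ on $\Omega$. For monotonicity in $\delta$: the map $\delta\mapsto u_\delta$ is nondecreasing (the sup is over a larger ball), but $\delta\mapsto c_0\delta^\beta$ also increases, so I would instead argue pointwise that $\widetilde u_\delta\downarrow u$. Fix $q\in\Omega$; for $\delta$ small enough $q\in\Omega_\delta$, and $u_\delta(q)\to u(q)$ as $\delta\to 0$ by upper semicontinuity of $u$ together with $u_\delta(q)\ge u(q)$ (or simply by continuity of $u$, which holds here since $u\in C(\overline\Omega)$); likewise $u(q)+c_0\delta^\beta\to u(q)$. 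Therefore $\widetilde u_\delta(q)=\max\{u_\delta(q),u(q)+c_0\delta^\beta\}\to u(q)$, and since each $\widetilde u_\delta\ge u$ this convergence is from above; that it is actually monotone decreasing along a sequence $\delta_j\downarrow 0$ follows because a decreasing sequence of psh functions converging pointwise to a psh limit can always be extracted, or is arranged by a standard diagonal/Dini argument using that all the $\widetilde u_{\delta_j}$ lie above $u$ and converge to it; for the purposes of Section~\ref{SDPPH} it suffices to have $\widetilde u_\delta\ge u$, $\widetilde u_\delta\in PSH\cap L^\infty(\Omega)$, $\widetilde u_\delta = u_\delta$ on $\Omega_{2\delta}$ (say), and $\widetilde u_\delta\to u$ in $L^1_{loc}$.

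\medskip
\noindent\textbf{Main obstacle.} The crux is Step~1: producing the uniform estimate $u_\delta \le u + c_0\delta^\beta$ in the collar $\Omega\setminus\Omega_\delta$ with the \emph{correct power} $\delta^\beta$ and a constant depending only on $u$. This is exactly where the $C^{1,1}$ (hence $\mathrm{Lip}_{2\beta}$ for $\beta\le 1/2$) regularity of $\psi$ and the barrier construction of Proposition~\ref{pro2}/Corollary~\ref{cor2} enter: one needs the solution $u$ to be $\beta$-Hölder in a boundary collar so that translating by $\|\zeta\|<\delta$ changes $u$ by at most $O(\delta^\beta)$ there. Once that collar Hölder bound is in hand, the gluing and the limiting statements are routine applications of the pluripotential facts already collected in Section~\ref{11111}.
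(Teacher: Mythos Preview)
Your three–step outline (collar estimate, gluing via the psh patching lemma, then boundedness and convergence) is exactly the scheme the paper follows, which in turn is the argument of \cite[Proposition~2.1]{GKZ}. The gap is in Step~1, in the tool you invoke to obtain the collar estimate $u_\delta\le u+c_0\delta^\beta$ on $\{\delta\le \operatorname{dist}(\cdot,\partial\Omega)\le 2\delta\}$.

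You appeal to Corollary~\ref{cor2}, but that corollary requires $f^{1/n}\in \mathrm{Lip}_\alpha(\overline\Omega)$ (in particular $f$ continuous), whereas in the setting where Proposition~\ref{7} is applied one only has $f\in L^p(\Omega)$. Corollary~\ref{cor2} therefore cannot be quoted, and appealing to ``$u\in C^{0,\beta}$ in a collar'' without a mechanism is precisely what needs to be proved. The paper supplies that mechanism through Lemma~\ref{7.5}: for $\psi\in\mathrm{Lip}_{2\beta}(\partial\Omega)$ and $f\in L^p$ one builds $\varphi,\phi\in PSH(\Omega)\cap C^{\beta}(\overline\Omega)$ with $\varphi=\psi=-\phi$ on $\partial\Omega$ and $\varphi\le u\le -\phi$ in $\Omega$. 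Then for $q$ with $\operatorname{dist}(q,\partial\Omega)\le 2\delta$, picking $\xi\in\partial\Omega$ closest to $q$ and any $|\zeta|<\delta$,
\[
u(q+\zeta)-u(q)\ \le\ -\phi(q+\zeta)-\varphi(q)
\ =\ \bigl[-\phi(q+\zeta)+\phi(\xi)\bigr]+\bigl[\varphi(\xi)-\varphi(q)\bigr]
\ \le\ c_0\,\delta^{\beta},
\]
using only the $C^\beta$ regularity of the barriers, not of $u$. This is the substance behind the paper's one–line proof ``Using Lemma~\ref{7.5}, and follow the same proof of Proposition~2.1 in \cite{GKZ}.'' Note also that the lower barrier $\varphi\le u$ genuinely uses that $u$ solves $(\Delta u)^n=f\,dV$ (via comparison), so the implicit hypothesis that $u$ is the Dirichlet solution is essential, as you tacitly assume.

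On Step~3: the monotone decrease is in fact immediate once Step~1 is done correctly. Both $\delta\mapsto u_\delta$ and $\delta\mapsto u+c_0\delta^\beta$ are nondecreasing in $\delta$, so on $\Omega_{\delta_2}$ one has $\widetilde u_{\delta_1}\le\widetilde u_{\delta_2}$; on $\Omega\setminus\Omega_{\delta_1}$ it is trivial; and on $\Omega_{\delta_1}\setminus\Omega_{\delta_2}$ the same barrier computation at scale $\delta_2$ gives $u_{\delta_1}\le u+c_0\delta_2^\beta=\widetilde u_{\delta_2}$. No diagonal or Dini argument is needed.
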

For the proof we need the following result.
\begin{lem}\label{7.5}
Fix $\psi\in Lip_{2\alpha}(\partial\Omega),$ $f\in L^{p}(\Omega),$ $p>2$ and set $u:=u(\Omega,\psi,f).$ Then there exist $\varphi,\phi\in PSH(\Omega)\cap C^{\alpha}(\overline{\Omega})$ such that
\begin{enumerate}
  \item $\varphi(\xi)=\psi(\xi)=-\phi(\xi),$ $\forall\xi\in\partial\Omega.$
  \item $\varphi(q)\leq u(q)\leq-\phi(q)$ $\forall q\in\Omega.$
\end{enumerate}
\end{lem}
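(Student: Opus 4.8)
The plan is to construct separately a plurisuperharmonic upper barrier $-\phi$ and a plurisubharmonic lower barrier $\varphi$, both in $C^{\alpha}(\overline{\Omega})$ and both equal to $\psi$ on $\partial\Omega$. First I would build the upper barrier by running the boundary--barrier construction from the proof of Proposition~\ref{pro2} (equivalently Corollary~\ref{cor1}) with boundary data $-\psi$ and density $f\equiv 0$: for each $\xi\in\partial\Omega$ this produces $v_{\xi}\in PSH(\Omega)\cap C(\overline{\Omega})$ with $v_{\xi}\le -\psi$ on $\partial\Omega$, $v_{\xi}(\xi)=-\psi(\xi)$ and $\theta_{v_{\xi}}(t)\le C\,\theta_{-\psi}(t^{1/2})$. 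Since $\psi\in Lip_{2\alpha}(\partial\Omega)$ with $0<\alpha\le\tfrac12$ we have $\theta_{-\psi}(t^{1/2})=\theta_{\psi}(t^{1/2})\le C t^{\alpha}$, so $\phi:=\bigl(\sup_{\xi\in\partial\Omega}v_{\xi}\bigr)^{*}$ inherits this modulus and lies in $PSH(\Omega)\cap C^{\alpha}(\overline{\Omega})$, with $\phi=-\psi$ on $\partial\Omega$. Then $u+\phi\in PSH(\Omega)\subset SH(\Omega)$ is continuous on $\overline{\Omega}$ and vanishes on $\partial\Omega$, so the maximum principle for subharmonic functions gives $u+\phi\le 0$ in $\Omega$, that is $u\le -\phi$, with $-\phi=\psi$ on $\partial\Omega$.

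For the lower barrier, let $\varphi_{0}:=\bigl(\sup_{\xi\in\partial\Omega}v^{\psi}_{\xi}\bigr)^{*}$ be the analogous envelope built from the $f\equiv 0$ boundary barriers with data $\psi$, so $\varphi_{0}\in PSH(\Omega)\cap C^{\alpha}(\overline{\Omega})$ and $\varphi_{0}=\psi$ on $\partial\Omega$. Let $\rho\in C^{\infty}$ be a strictly plurisubharmonic defining function of $\Omega$. A direct computation — using $\Delta(G\circ\rho)=G''(\rho)\,d_{0}\rho\wedge d_{1}\rho+G'(\rho)\Delta\rho$ — shows that $g:=-(-\rho)^{\alpha}$ is plurisubharmonic on $\Omega$ for $0<\alpha<1$, because $\Delta g=\alpha(-\rho)^{\alpha-1}\Delta\rho+\alpha(1-\alpha)(-\rho)^{\alpha-2}\,d_{0}\rho\wedge d_{1}\rho$ is a positive $2$-form; moreover $g\in C^{\alpha}(\overline{\Omega})$, $g=0$ on $\partial\Omega$ and $|g(q)|\asymp\mathrm{dist}(q,\partial\Omega)^{\alpha}$. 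Set $\varphi:=\varphi_{0}+Mg=\varphi_{0}-M(-\rho)^{\alpha}$; this is plurisubharmonic, lies in $C^{\alpha}(\overline{\Omega})$, and equals $\psi$ on $\partial\Omega$ for every $M>0$. Since $\varphi_{0}-u$ is bounded and $(-\rho)^{\alpha}$ is bounded below by a positive constant outside any boundary collar, to obtain $\varphi\le u$ on $\Omega$ for a suitable $M$ it suffices to prove $\varphi_{0}(q)-u(q)\le M\,\mathrm{dist}(q,\partial\Omega)^{\alpha}$ near $\partial\Omega$, and since $\varphi_{0}\in C^{\alpha}(\overline{\Omega})$ this reduces, through the nearest point $\hat q\in\partial\Omega$, to the one-sided boundary estimate $u(q)\ge\psi(\hat q)-C\,\mathrm{dist}(q,\partial\Omega)^{\alpha}$ for $q$ near $\partial\Omega$.

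This boundary estimate is the crux and the step I expect to cost the most work. Fixing $\hat q\in\partial\Omega$ and $r$ small, one works on the bounded strictly pseudoconvex domain $\Omega_{r}:=\Omega\cap B(\hat q,r)$ of diameter $\lesssim r$. Let $w_{r}$ solve the quaternionic Monge--Amp\`ere equation $(\Delta w_{r})^{n}=f\,dV$ in $\Omega_{r}$ with $w_{r}=0$ on $\partial\Omega_{r}$ (existence by the quaternionic analogue of Ko{\l}odziej's theorem, \cite{SM}); then $(\Delta(\varphi_{0}+w_{r}))^{n}\ge(\Delta w_{r})^{n}=f\,dV=(\Delta u)^{n}$ in $\Omega_{r}$, so a comparison on $\Omega_{r}$ gives $u\ge\varphi_{0}+w_{r}$ near $\hat q$ up to an error coming from the mismatch of the boundary values of $\varphi_{0}+w_{r}$ and $u$ on the free part $\Omega\cap\partial B(\hat q,r)$; bounding that error by $O(r^{\alpha})$ requires a capacity/stability estimate on $\Omega_{r}$ (Theorem~\ref{4} used locally together with the continuity of $u$) rather than a pointwise comparison — since no pointwise subsolution dominates an arbitrary $L^{p}$ density, this is the genuinely delicate point. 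The quaternionic Ko{\l}odziej $L^{\infty}$-estimate, applied after the dilation $\Omega_{r}=r\widetilde{\Omega}$, yields $\|w_{r}\|_{L^{\infty}(\Omega_{r})}\le C\,r^{\,2-\frac{4}{p}}\,\|f\|_{L^{p}(\Omega)}^{1/n}$, and the exponent $2-\tfrac{4}{p}$ is positive precisely because $p>2$; combined with $\varphi_{0}(q)\ge\psi(\hat q)-C\,\mathrm{dist}(q,\partial\Omega)^{\alpha}$, the choice $r\asymp\mathrm{dist}(q,\partial\Omega)$, and the fact that the admissible exponent satisfies $\alpha<\tfrac{2}{qn+1+nq/(2/q-1)}<2-\tfrac{4}{p}$, this produces $u(q)\ge\psi(\hat q)-C\,\mathrm{dist}(q,\partial\Omega)^{\alpha}$, hence $\varphi\le u$, hence the Lemma. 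Besides this localized Ko{\l}odziej estimate and the capacity control of the boundary mismatch, everything else is routine given the material of Section~\ref{11111} and \cite{SM,WW,WZ}.
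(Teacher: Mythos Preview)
Your construction of the upper barrier $-\phi$ is correct and is essentially the paper's (the paper adds a nonpositive term $b_f$ to $\phi$, but it is not needed for the inequality $u\le-\phi$, which follows from your maximum-principle argument since $u+\phi$ is subharmonic with zero boundary values).

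The lower barrier is where your argument breaks. After the local comparison on $\Omega_r=\Omega\cap B(\hat q,r)$ you only obtain
\[
u\ \ge\ \varphi_0+w_r-c_r,\qquad c_r:=\sup_{\Omega\cap\partial B(\hat q,r)}(\varphi_0+w_r-u)_+=\sup_{\Omega\cap\partial B(\hat q,r)}(\varphi_0-u)_+,
\]
and since $\varphi_0\le\psi(\hat q)+Cr^\alpha$ on that sphere, bounding $c_r=O(r^\alpha)$ is \emph{equivalent} to $u\ge\psi(\hat q)-Cr^\alpha$ there --- precisely the boundary H\"older estimate you are trying to prove. Theorem~\ref{4} does not rescue this: it already assumes $u_1\ge u_2$ on the boundary, so it cannot manufacture the missing inequality on the free part of $\partial\Omega_r$; and mere continuity of $u$ from \cite{SM} gives no rate. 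So the step you flag as ``genuinely delicate'' is in fact circular. Your scaling computation for $\|w_r\|_\infty$ is fine but becomes irrelevant once the comparison fails.

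The paper sidesteps the whole local analysis with a global trick. Extend $f$ by zero to $\tilde f$ on a large ball $\mathbb B\Supset\Omega$; since $\tilde f\equiv 0$ near $\partial\mathbb B$, a sufficiently large multiple $b_{\tilde f}:=A\rho_{\mathbb B}$ of the ball's defining function is already a $C^2$ barrier for $MA(\mathbb B,0,\tilde f)$ (this is the ``$f$ bounded near the boundary'' case). Transporting it back by $b_f:=u(\Omega,-b_{\tilde f}|_{\partial\Omega},0)+b_{\tilde f}$ gives a \emph{global} barrier for $MA(\Omega,0,f)$ which is H\"older on $\overline\Omega$ (Corollary~\ref{cor2} applied to the smooth boundary data $-b_{\tilde f}$ with zero right-hand side). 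Then one simply takes
\[
\varphi:=u(\Omega,\psi,0)+b_f,\qquad \phi:=u(\Omega,-\psi,0)+b_f.
\]
Both lie in $PSH(\Omega)\cap C^\alpha(\overline\Omega)$ by Corollary~\ref{cor2}, and the required inequality $\varphi\le u$ follows from the comparison principle via $\varphi\le u(\Omega,\psi,0)+u(\Omega,0,f)\le u$. No scaling, no free-boundary mismatch, no circularity.
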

\begin{proof}
We are going to construct a weak barrier $b_{f}\in PSH(\Omega)\cap Lip_{1}(\Omega)$ for the Dirichlet problem $MA(\Omega,0,f)$ such that
\begin{itemize}
  \item $b_{f}(\xi)=0$ $\forall\xi\in\partial\Omega$
  \item $b_{f}\leq u(\Omega,0,f)$ in $\Omega$
  \item $|b_{f}(q)-b_{f}(\zeta)|\leq C|q-\zeta|$ $\forall q\in\Omega$ $\forall\zeta\in\Omega$
\end{itemize}
for some uniform constant $C>0.$
First, assume $f$ is bounded near $\partial\Omega,$ so $\exists K\subset\Omega$ $0\leq f\leq M$ on $\Omega\backslash K,$ where $K$ is a compact subset in $\Omega.$ \\
Set $b_{f}:=A\rho,$ where $\rho$ be a $C^{2}$ strictly psh defining function for $\Omega,$ by taking $A>0$ large enough so that $$ (\Delta b_{f})^{n}\geq MdV\geq fdV \ \ on \ \ \Omega\backslash K \ \ and \ \  b_{f}\leq m\leq u(\Omega,0,f) \ \ near \ \ K$$ where $m:=\min_{\Omega}u(\Omega,0,f).$
Then $ (\Delta b_{f})^{n}\geq(\Delta u(\Omega,0,f))^{n}$ on $\Omega\backslash K,$ and $b_{f}\leq u(\Omega,0,f)$ on $\partial(\Omega\backslash K).$  This implies, by the comparaison principle ( Corollary 1.1 in \cite{WZ}) that
$b_{f}\leq u(\Omega,0,f)$ in $\Omega.$ \\
For the general case, $f$ is not bounded near $\partial\Omega.$ Fix a large ball $\mathbb{B}\subset\mathbb{H}^{n}$ so that $\Omega\Subset\mathbb{B}\subset\mathbb{H}^{n}.$ \\ Set $\widetilde{f}:=f$ in $\Omega$ and $\widetilde{f}=0$ in $\mathbb{B}\backslash\Omega.$  By the first part of this proof, we can find a barrier function \\ $b_{\widetilde{f}}\in PSH(\mathbb{B})\cap C^{2}(\mathbb{B})$ for the Dirichlet problem $MA(\mathbb{B},0,\widetilde{f}).$ Set $h:=u(\Omega,-b_{\widetilde{f}},0).$ \\
Since $-b_{\widetilde{f}}\in C^{2}(\partial\Omega),$ by Corollary \ref{cor2} $h$ is Lipshitz on $\Omega.$ Set $b_{f}:=h+b_{\widetilde{f}}\in PSH(\Omega)\cap Lip_{1}(\Omega)$ is a barrier function for $MA(\Omega,0,f).$ Moreover, by corollary \ref{cor2} we have $u(\Omega,\pm\psi,0)$ is H\"{o}lder continuous of order $\alpha,$ where $\psi\in C^{2\alpha}(\partial\Omega).$ Then, the functions $\varphi:=u(\Omega,\psi,0)+b_{f}$ and $\phi:=u(\Omega,-\psi,0)+b_{f}$ belong to $PSH(\Omega)\cap Lip_{\alpha}(\overline{\Omega})$ and satisfies 1) and 2).
\end{proof}
\begin{proof} of Proposition \ref{7}\\
Using Lemma \ref{7.5}, and follow the same proof of Proposition 2.1 in \cite{GKZ}.
\end{proof}

Now, we are in position to prove our main tool, which is the following.
\begin{thm}\label{8}
Let $\Omega$ be a bounded pseudoconvex domain of $\mathbb{H}^{n}.$ Assume that $\psi\in Lip_{2\beta}(\partial\Omega)$ and fix $f\in L^{p}(\Omega)$  for some $p>2.$ Let $u$ be the unique solution to \ref{DP12} for $d\nu=fdV.$
\begin{enumerate}
  \item If $\nabla u$ belongs to $L^{2}(\Omega),$ then $u\in Lip_{\beta'}(\overline{\Omega})$ for all $\beta'<\min(\beta,\gamma_{2}).$
  \item If the total mass of $\Delta_{\mathbb{H}^{n}} u$ is finite, then $u\in Lip_{\beta''}(\overline{\Omega})$ for all $\beta''<\min(\beta,2\gamma_{1}).$
\end{enumerate}
where $\frac{1}{p}+\frac{1}{q}=1,$ and $\gamma_{1},\gamma_{2}$ are defined in Theorem \ref{4}.
\end{thm}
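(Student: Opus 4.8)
The plan is to combine the stability estimate (Theorem \ref{4}) with the $L^p$-growth control of the regularizations $u_\delta$ and $\widehat u_\delta$ established in Lemma \ref{6}, and to transfer interior H\"older estimates to the boundary via the barrier construction of Lemma \ref{7.5} and Proposition \ref{7}. Concretely, for part (1) I would first fix $\delta>0$ small and form the plurisubharmonic extension $\widetilde u_\delta$ of $u_\delta$ to all of $\Omega$ provided by Proposition \ref{7}; since $\psi\in Lip_{2\beta}(\partial\Omega)$ one has $\widetilde u_\delta\geq u$ near $\partial\Omega$ (indeed $\widetilde u_\delta\geq u+c_0\delta^\beta$), so the pair $u_1:=u$, $u_2:=\widetilde u_\delta$ satisfies the hypotheses of Theorem \ref{4} with $r=2$, using that $(\Delta u)^n=f\,dV$ with $f\in L^p$, $p>2$.

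Next I would feed the $L^2$ bound into Theorem \ref{4}: for any $\gamma<\gamma_2$ there is $C>0$ with
\[
\sup_\Omega(\widetilde u_\delta-u)\leq C\,\big\|(\widetilde u_\delta-u)_+\big\|_{L^2(\Omega)}^{\gamma}.
\]
On $\Omega_\delta$ we have $\widetilde u_\delta-u\leq\max\{u_\delta-u,\,c_0\delta^\beta\}$, and on $\Omega\setminus\Omega_\delta$ it equals $c_0\delta^\beta$; combining this with the estimate of Lemma \ref{6}(1), namely $\|u_\delta-u\|_{L^2(\Omega_\delta)}\leq C_n\|\nabla u\|_{L^2(\Omega)}\delta$, gives $\|(\widetilde u_\delta-u)_+\|_{L^2(\Omega)}\leq C'\delta^{\min(\beta,1)}=C'\delta^\beta$ (since $\beta<1$, and shrinking constants). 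Hence $u_\delta-u\leq\sup_\Omega(\widetilde u_\delta-u)\leq C''\delta^{\beta\gamma}$ on $\Omega_\delta$. Choosing $\gamma$ close enough to $\gamma_2$ and renaming $\beta':=\beta\gamma$ we may reach any $\beta'<\min(\beta,\beta\gamma_2)$; a standard iteration/bootstrap on the exponent (as in \cite{GKZ}) upgrades this to any $\beta'<\min(\beta,\gamma_2)$. Finally, Lemma \ref{5} converts the bound $u_\delta-u\leq A_1\delta^{\beta'}$ on $\Omega_\delta$ into $\widehat u_\delta-u\leq A_2\delta^{\beta'}$, and the standard fact that such a decay of the sup-convolution minus $u$ is equivalent to $u\in C^{0,\beta'}(\overline\Omega)$ (together with the $Lip_{2\beta}$ regularity of the boundary data controlling behavior near $\partial\Omega$ via Lemma \ref{7.5}) yields $u\in Lip_{\beta'}(\overline\Omega)$.

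Part (2) is entirely parallel, replacing the role of $u_\delta$ by $\widehat u_\delta$ and $r=2$ by $r=1$: apply Theorem \ref{4} with $u_2:=\widetilde{\widehat u}_\delta$ (the plurisubharmonic extension of $\widehat u_\delta$, again via Proposition \ref{7}) and exploit Lemma \ref{6}(2), which gives $\|\widehat u_\delta-u\|_{L^1(\Omega_\delta)}\leq C_n\|\Delta_{\mathbb H^n}u\|_\Omega\,\delta^2$. Since the $L^1$-mass decays like $\delta^2$ rather than $\delta$, the gain in the stability estimate is $(\delta^2)^{\gamma_1}=\delta^{2\gamma_1}$, producing $u\in Lip_{\beta''}(\overline\Omega)$ for any $\beta''<\min(\beta,2\gamma_1)$ after the same bootstrap and the conversion via Lemma \ref{5}.

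The main obstacle I expect is twofold. First, the comparison at the boundary: Theorem \ref{4} requires $u_1\geq u_2$ on $\partial\Omega$, so one must verify that $\widetilde u_\delta$ (resp. $\widetilde{\widehat u}_\delta$) genuinely lies below $u$ near the boundary uniformly in $\delta$ — this is exactly what the additive correction $c_0\delta^\beta$ in Proposition \ref{7} is designed for, and it is where the hypothesis $\psi\in Lip_{2\beta}$ and the H\"older barriers $\varphi,\phi$ of Lemma \ref{7.5} enter in an essential way. Second, the exponent bootstrap: a single application of the stability estimate only gives the exponent $\beta\gamma_r$, which is strictly smaller than the claimed $\min(\beta,\gamma_r)$ when $\beta<1$; recovering the sharp range requires iterating the estimate, each step using the already-proved H\"older continuity to improve the integrability input, and controlling the constants so that the iteration converges — this is the delicate bookkeeping borrowed from the complex case in \cite{GKZ}, and the quaternionic specifics (the optimal exponent $2$ in $L^p_{loc}$ for psh functions, Proposition 2 of \cite{SM}, and the capacity-volume estimate of Lemma \ref{2}) must be threaded through consistently.
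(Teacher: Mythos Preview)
Your overall strategy matches the paper's, but there is a concrete error in how you set up the stability estimate, and it propagates into an exponent loss that you then try to repair with a nonexistent ``bootstrap''.

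\medskip
\noindent\textbf{The boundary comparison is backwards.} You take $u_1:=u$ and $u_2:=\widetilde u_\delta$ and say that Proposition~\ref{7} ensures $\widetilde u_\delta$ lies \emph{below} $u$ near $\partial\Omega$. It does the opposite: by construction $\widetilde u_\delta=u+c_0\delta^\beta$ on $\Omega\setminus\Omega_\delta$, so $\widetilde u_\delta>u$ on $\partial\Omega$ and the hypothesis $u_1\ge u_2$ on $\partial\Omega$ of Theorem~\ref{4} fails. The paper (following \cite{GKZ}) fixes this by taking $u_1:=u+c_0\delta^\beta$; this shift is harmless since $(\Delta(u+c_0\delta^\beta))^n=(\Delta u)^n=f\,dV$, and now $u_1=u_2$ on $\partial\Omega$.

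\medskip
\noindent\textbf{The exponent comes out right in one step, without any bootstrap.} The same shift also cures your exponent loss. With $u_1=u+c_0\delta^\beta$ one has $(\widetilde u_\delta-u_1)_+=0$ on $\Omega\setminus\Omega_\delta$ and $(\widetilde u_\delta-u_1)_+=(u_\delta-u-c_0\delta^\beta)_+\le u_\delta-u$ on $\Omega_\delta$, so Lemma~\ref{6}(1) gives directly
\[
\|(\widetilde u_\delta-u_1)_+\|_{L^2(\Omega)}\le \|u_\delta-u\|_{L^2(\Omega_\delta)}\le C_n^{1/2}\|\nabla u\|_{L^2(\Omega)}\,\delta,
\]
and Theorem~\ref{4} yields $\sup_{\Omega_\delta}(u_\delta-u)\le c_0\delta^\beta+C\delta^\gamma\le A\delta^{\min(\beta,\gamma)}$ for any $\gamma<\gamma_2$. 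In your setup the $c_0\delta^\beta$ correction sits inside the $L^2$ norm over all of $\Omega$, forcing $\|(\widetilde u_\delta-u)_+\|_{L^2}\asymp\delta^\beta$ and hence only $\delta^{\beta\gamma}$. The iteration you invoke from \cite{GKZ} is not actually there: neither \cite{GKZ} nor the present paper bootstraps, and it is unclear how knowing $u\in Lip_{\beta\gamma}$ would improve either the boundary correction $c_0\delta^\beta$ (which depends only on $\psi\in Lip_{2\beta}$) or the $L^2$ growth from Lemma~\ref{6}. Part~(2) has the identical issue and the identical fix, with $u_1:=u+c_0\delta^\beta$, $r=1$, and $\widehat u_\delta$ in place of $u_\delta$.
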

\begin{proof}
1)We have $f\in L^{p}(\Omega),$ $p>2.$ By \cite{SM}, we have the solution $u\in PSH(\Omega)\cap C(\Omega)$ is a continuous plurisubharmonic function. Then, we have to show that $u$ is H\"{o}lder continuous on $\overline{\Omega}.$ Given $0<\gamma<\frac{2}{qn+2+\frac{nq}{\frac{2}{q}-1}}.$ Applying the stability estimate Theoreme \ref{4} with $r=2,$ $u_{2}=\widetilde{u}_{\delta}$ and $u_{1}:=u+c_{0}\delta^{\beta}$ we get
$$ \sup_{\Omega}[\widetilde{u}_{\delta}-(u+c_{0}\delta^{\beta})]\leq C\|(\widetilde{u}_{\delta}-u-c_{0}\delta^{\beta})_{+}\|^{\gamma}_{L^{2}(\Omega)}.$$
Since $\widetilde{u}_{\delta}=u+c_{0}\delta^{\beta}$ in $\Omega\backslash \Omega_{\delta},$ we have
$$ \sup_{\Omega_{\delta}}[u_{\delta}-u-c_{0}\delta^{\beta}]\leq C\|(u_{\delta}-u-c_{0}\delta^{\beta})_{+}\|^{\gamma}_{L^{2}(\Omega_{\delta})}.$$
Since $((u_{\delta}-u-c_{0}\delta^{\beta})_{+}\leq u_{\delta}-u$ and by Lemma \ref{6}, we have
$$\sup_{\Omega_{\delta}}(u_{\delta}-u)\leq c_{0}\delta^{\beta}+ C\|(u_{\delta}-u\|^{\gamma}_{L^{2}(\Omega_{\delta})}\leq c_{0}\delta^{\beta}+CC_{n}^{\frac{\gamma}{2}}\|\nabla u\|^{\gamma}_{L^{2}(\Omega)}\delta^{\gamma}.$$
Then, $$ \sup_{\Omega_{\delta}}(u_{\delta}-u)\leq A\delta^{\min\{\beta,\gamma\}},$$ for $\delta$ small enough, where $A=c_{0}+CC_{n}^{\frac{\gamma}{2}}\|\nabla u\|^{\gamma}_{L^{2}(\Omega)}.$ This proves the first part of this result.\\
2) Given $0<\gamma<\gamma_{1}.$
$$ u_{\delta}(q)\leq u(q)+c_{0}\delta^{\beta}\Longrightarrow \widehat{u}_{\delta}\leq u_{\delta}<u+c_{0}\delta^{\beta} \ \ on \ \ \partial\Omega_{\delta}.$$
The function $$u'_{\delta}=\left\{\begin{array}{ll}
                       \max\{\widehat{u}_{\delta},u+c_{0}\delta^{\beta}\} & \hbox{in $\Omega_{\delta}$;} \\
                       u+c_{0}\delta^{\beta}, &\hbox{in $\Omega\backslash\Omega_{\delta}$.}
                     \end{array}
                   \right.$$
is a bounded plurisubharmonic function on $\Omega,$ continuous in $\overline{\Omega}.$ Using Theorem \ref{4} with \\ $u_{1}:=u+c_{0}\delta^{\beta},$ $u_{2}:=u'_{\delta}$ and $r=1,$ we get
$$ \sup_{\Omega}[u'_{\delta}-u-c_{0}\delta^{\beta}]\leq C\|(u'_{\delta}-u-c_{0}\delta^{\beta})_{+}\|^{\gamma}_{L^{1}(\Omega)}.$$
We have $u'_{\delta}=u+c_{0}\delta^{\beta}$ in $\Omega\backslash\Omega_{\delta},$ hence
$$ \sup_{\Omega_{\delta}}[\widehat{u}_{\delta}-u-c_{0}\delta^{\beta}]\leq C\|(\widehat{u}_{\delta}-u-c_{0}\delta^{\beta})_{+}\|^{\gamma}_{L^{1}(\Omega_{\delta})}.$$
Also, we have $(\widehat{u}_{\delta}-u-c_{0}\delta^{\beta})_{+}\leq\widehat{u}_{\delta}-u,$ so we get
\begin{eqnarray*}
  \sup_{\Omega_{\delta}}(\widehat{u}_{\delta}-u) &\leq& c_{0}\delta^{\beta}+C\|\widehat{u}_{\delta}-u\|^{\gamma}_{L^{1}(\Omega_{\delta})}\\
    &\leq& c_{0}\delta^{\beta}+CC^{\gamma}_{n}\|\Delta_{\mathbb{H}^{n}} u\|^{\gamma}_{\Omega}\delta^{2\gamma}
\end{eqnarray*}
Then, $\sup_{\Omega_{\delta}}(\widehat{u}_{\delta}-u)\leq M_{1}\delta^{\min\{\beta,2\gamma\}},$ for $\delta$ small enough, and $M_{1}=c_{0}+CC_{n}^{\alpha}\|\Delta_{\mathbb{H}^{n}} u\|_{\Omega}^{\gamma}.$\\ By Lemma \ref{5}, we have
$$\sup_{\Omega_{\delta}}(u_{\delta}-u)\leq M_{2}\delta^{\min\{\beta,2\gamma\}},$$
for $\delta$ small enough, and some uniform constant $M_{2}>0.$ This finishes the last part of this result.
\end{proof}
Now, we are in the last part of this paper. We are going to prove the main Theorem, using these following results.
\begin{lem}\label{9}
Let $u,$ $v$ be continuous functions on $\overline{\Omega}$ and be plurisubharmonic functions in $\Omega,$ such that $u\geq v$ in $\Omega$ and $u=v$ on $\partial\Omega.$ Then
$$ \int_{\Omega}\Delta u\wedge\beta_{n}^{n-1}\leq\int_{\Omega}\Delta v\wedge\beta_{n}^{n-1}$$
$$\int_{\Omega}d_{0}u\wedge d_{1}u\wedge\beta_{n}^{n-1}\leq 2\int_{\Omega}\gamma(v.u)\wedge\beta_{n}^{n-1}+\int_{\Omega}d_{0}v\wedge d_{1}v\wedge\beta_{n}^{n-1},$$
where $\gamma(u,v):=\frac{1}{2}(d_{0}u\wedge d_{1}v-d_{1}u\wedge d_{0}v)$, $\beta_{n}:=\frac{1}{8}\Delta(\|q\|^{2}).$ Furthermore, if
$\int_{\Omega}d_{0}v\wedge d_{1}v\wedge\beta_{n}^{n-1}<+\infty$ then $\int_{\Omega}d_{0}u\wedge d_{1}u\wedge\beta_{n}^{n-1}<+\infty.$
\end{lem}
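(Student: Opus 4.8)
The plan is to mimic the classical integration-by-parts computation for the complex Monge--Ampère setting, using the Stokes-type formula (Lemma \ref{200}) together with the algebraic identities \eqref{eq3} for $d_0,d_1$. First I would reduce to the smooth case: approximate $u$ and $v$ from above by decreasing sequences of smooth plurisubharmonic functions (using property (3) of the Proposition on psh functions and the fact that $\Delta u\wedge\beta_n^{n-1}$, $d_0u\wedge d_1u\wedge\beta_n^{n-1}$ converge weakly under such approximation, by the Bedford--Taylor-type convergence results quoted in the preliminaries), so that all the currents below are honest smooth forms and all integrations by parts are legitimate; at the end one passes to the limit. Throughout, note that $\beta_n^{n-1}$ is a closed strongly positive $(2n-2)$-form, so $d_0\beta_n^{n-1}=d_1\beta_n^{n-1}=0$.

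For the first inequality, set $w:=v-u\ge 0$ on $\Omega$ with $w=0$ on $\partial\Omega$. Then
$$\int_\Omega \Delta v\wedge\beta_n^{n-1}-\int_\Omega\Delta u\wedge\beta_n^{n-1}=\int_\Omega \Delta w\wedge\beta_n^{n-1}=\int_\Omega d_0d_1 w\wedge\beta_n^{n-1}.$$
Applying Lemma \ref{200} twice (first integrating out $d_0$, then $d_1$, moving the derivatives onto $\beta_n^{n-1}$, which is annihilated), the right-hand side equals $0$ in the smooth case; but since $w$ is only psh$-$psh, the correct statement is that $\int_\Omega d_0d_1w\wedge\beta_n^{n-1}=\int_{\partial\Omega}(\cdots)$ where the boundary term vanishes because $w=0$ there, \emph{plus} a nonnegative contribution: more carefully, write $\int_\Omega \Delta w\wedge\beta_n^{n-1}=\int_\Omega w\,\Delta\beta_n^{n-1}=0$ formally, and instead argue via $w\ge0$ psh-minus-psh by the comparison/Stokes argument exactly as in the complex case, yielding $\int_\Omega\Delta u\wedge\beta_n^{n-1}\le\int_\Omega\Delta v\wedge\beta_n^{n-1}$. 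The cleanest route is: $\int_{\Omega}(\Delta u-\Delta v)\wedge\beta_n^{n-1}=\int_\Omega \Delta(u-v)\wedge\beta_n^{n-1}$, and since $u-v\le 0$ on $\Omega$ with $u-v=0$ on $\partial\Omega$ and $\Delta$ of a difference of psh functions tested against the positive form $\beta_n^{n-1}$ can be written, after one integration by parts using Lemma \ref{200}, as $\int_{\partial\Omega} d_1(u-v)\wedge\beta_n^{n-1}\wedge(\text{normal})\ge 0$, because $u-v$ attains its boundary maximum $0$ so its outward normal derivative is $\ge 0$. This gives the first claim.

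For the second inequality, expand $d_0u\wedge d_1u=d_0(v-w)\wedge d_1(v-w)$ with $w=v-u$:
$$d_0u\wedge d_1u=d_0v\wedge d_1v-d_0v\wedge d_1w-d_0w\wedge d_1v+d_0w\wedge d_1w.$$
Wedge with $\beta_n^{n-1}$ and integrate. The term $\int_\Omega d_0w\wedge d_1w\wedge\beta_n^{n-1}$: integrate by parts via Lemma \ref{200} on the factor $w$ (legitimate since $w=0$ on $\partial\Omega$), getting $-\int_\Omega w\,d_0d_1w\wedge\beta_n^{n-1}=-\int_\Omega w\,\Delta w\wedge\beta_n^{n-1}$, which one bounds by a combination of the other boundary-free terms; the cross terms $-\int_\Omega(d_0v\wedge d_1w+d_0w\wedge d_1v)\wedge\beta_n^{n-1}$ I would again integrate by parts moving the $d$ off $w$, converting them into $\int_\Omega w\,(d_0d_1v\pm d_1d_0v)\wedge\beta_n^{n-1}$-type expressions plus $\int_\Omega(d_0v\wedge d_1v-\cdots)$ terms, and collect everything into the quantity $2\gamma(v,u):=d_0v\wedge d_1u-d_1v\wedge d_0u$ wedged with $\beta_n^{n-1}$, using $\gamma(v,u)=\tfrac12(d_0v\wedge d_1u-d_1v\wedge d_0u)$ and the anticommutation $d_0d_1=-d_1d_0$ from \eqref{eq3}. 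After the bookkeeping one arrives at $\int_\Omega d_0u\wedge d_1u\wedge\beta_n^{n-1}\le 2\int_\Omega\gamma(v,u)\wedge\beta_n^{n-1}+\int_\Omega d_0v\wedge d_1v\wedge\beta_n^{n-1}$. The finiteness statement is then immediate: $\gamma(v,u)$ is a mixed term controlled by Cauchy--Schwarz by $\bigl(\int d_0v\wedge d_1v\wedge\beta_n^{n-1}\bigr)^{1/2}\bigl(\int d_0u\wedge d_1u\wedge\beta_n^{n-1}\bigr)^{1/2}$, so if $\int_\Omega d_0v\wedge d_1v\wedge\beta_n^{n-1}<+\infty$ the inequality gives a self-improving bound $X\le 2\sqrt{XY}+Y$ forcing $X=\int_\Omega d_0u\wedge d_1u\wedge\beta_n^{n-1}<+\infty$.

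The main obstacle I anticipate is the rigorous justification of the integrations by parts when $u,v$ are merely bounded psh (not $C^2$): one must check that the smooth approximants' boundary terms behave correctly and that all the wedge products converge in the sense of currents. This is handled exactly as in the complex Bedford--Taylor theory and in \cite{GKZ}, invoking the convergence results for $d_0,d_1$ and $\Delta$ quoted in Section \ref{11111}; the algebra with $\gamma(v,u)$ and the sign of the surviving boundary term (nonnegativity coming from $u-v$ achieving its maximum $0$ on $\partial\Omega$) is the only genuinely quaternionic-specific point, and it follows from the structure identities \eqref{eq3} and the positivity of $\beta_n^{n-1}$.
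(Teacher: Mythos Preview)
Your outline has a genuine gap in the boundary handling, and it misses the key approximation device that makes the paper's argument work.

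The approximation you propose---convolving $u,v$ with a smoothing kernel---produces smooth psh functions only on the shrunken domain $\Omega_\epsilon$, and these approximants do \emph{not} agree on $\partial\Omega_\epsilon$ (nor on $\partial\Omega$). So when you integrate by parts, you cannot simply say the boundary terms vanish or have a favorable sign. Your fallback argument, that ``$u-v$ attains its boundary maximum $0$ so its outward normal derivative is $\ge 0$'', is a Hopf-lemma-type statement that requires $C^1$ regularity up to the boundary, which you do not have for merely continuous psh functions; and in any case Lemma~\ref{200} as stated gives no boundary term to work with. (There is also a sign slip: since $u\ge v$, the function $w=v-u$ is $\le 0$, not $\ge 0$.)

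The paper's device is different and is precisely designed to kill the boundary issue: set $u_\epsilon:=\max\{u-\epsilon,v\}$. Because $u,v$ are continuous on $\overline{\Omega}$ with $u=v$ on $\partial\Omega$, one has $u_\epsilon=v$ in a whole \emph{neighborhood} of $\partial\Omega$. Now integration by parts (with cutoffs $\chi_j\nearrow 1$, using Lemma~\ref{200}) involves no boundary contribution at all, and one gets the exact \emph{equality} $\int_\Omega\Delta u_\epsilon\wedge\beta_n^{n-1}=\int_\Omega\Delta v\wedge\beta_n^{n-1}$. The first inequality then comes not from a boundary sign but from lower semicontinuity: $u_\epsilon\nearrow u$ and $\int_\Omega\Delta u\wedge\beta_n^{n-1}\le\liminf_{\epsilon\to 0}\int_\Omega\Delta u_\epsilon\wedge\beta_n^{n-1}$. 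For the second inequality the same $u_\epsilon$ is used together with the clean algebraic identity
\[
d_0v\wedge d_1v - d_0u_\epsilon\wedge d_1u_\epsilon + 2\gamma(v,u_\epsilon)=d_0(v-u_\epsilon)\wedge d_1(v+u_\epsilon),
\]
and after one integration by parts (legitimate since $v-u_\epsilon=0$ near $\partial\Omega$) this becomes $\int_\Omega(u_\epsilon-v)\,\Delta(v+u_\epsilon)\wedge\beta_n^{n-1}\ge 0$; the positivity comes from $u_\epsilon\ge v$ and the plurisubharmonicity of $v+u_\epsilon$, not from any boundary normal derivative. Your Cauchy--Schwarz argument for the finiteness clause is correct and matches the paper.
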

\begin{proof}
First, we set $u_{\epsilon}=\max\{u-\epsilon,v\}$ for $\epsilon>0.$ We have $u,$ $v$ are continuous and $u=v$ on $\partial\Omega,$ so $u_{\epsilon}=v$ in a neighborhood of $\partial\Omega.$ Let $\{\Omega_{j}\}$ a hyperconvex open in $\Omega$ such that $\{u_{\epsilon}\neq v\}\subset\subset\Omega_{1}\subset\subset\ldots\Omega_{j}\subset\subset\ldots\Omega$ and $\{\chi_{j}\}\subset C^{\infty}_{0}(\Omega)$ such that $\chi_{j}\equiv1$ in a neighborhood of $\overline{\Omega}_{j}$ and $\chi_{j}\nearrow 1.$ By Lemma \ref{200}, we have
\begin{eqnarray*}
  \int_{\Omega}\chi_{j}\Delta u_{\epsilon}\wedge\beta_{n}^{n-1} &=& -\int_{\Omega}d_{0}\chi_{j}\wedge d_{1}u_{\epsilon}\wedge\beta_{n}^{n-1} \\
    &=& -\int_{\Omega\setminus\overline{\Omega}_{j}}d_{0}\chi_{j}\wedge d_{1}u_{\epsilon}\wedge\beta_{n}^{n-1} \\
    &=& -\int_{\Omega\setminus\overline{\Omega}_{j}}d_{0}\chi_{j}\wedge d_{1}v\wedge\beta_{n}^{n-1}\\
    &=& \int_{\Omega}\chi_{j}\Delta v\wedge\beta_{n}^{n-1}
\end{eqnarray*}
Letting $j$ tend to $+\infty,$ we get
$$ \int_{\Omega}\Delta u_{\epsilon}\wedge\beta_{n}^{n-1}= \int_{\Omega}\Delta v\wedge\beta_{n}^{n-1}.$$
Since $u\geq v$ in $\Omega,$ we have $u_{\epsilon}\nearrow u$ in $\Omega.$ By the monotone convergence theorem, we get
$$\Delta u_{\epsilon}\wedge\beta_{n}^{n-1}\longrightarrow\Delta u\wedge\beta_{n}^{n-1}.$$ So
\begin{eqnarray*}
  \int_{\Omega}\Delta u\wedge\beta_{n}^{n-1} &\leq& \liminf_{\epsilon\longrightarrow 0}\int_{\Omega}\Delta u_{\epsilon}\wedge\beta_{n}^{n-1} \\
    &=& \int_{\Omega}\Delta v\wedge\beta_{n}^{n-1}.
\end{eqnarray*}
The first one follows.
For the second one, we have also $u,v$ are continuous and $u=v$ on $\partial\Omega,$ so we set $u_{\epsilon}:=\max\{u-\epsilon,v\}=v$ in a neighborhood of $\partial\Omega$ and $u_{\epsilon}\geq v$ on $\Omega.$
We have
\begin{eqnarray*}
             \int_{\Omega}d_{0}v\wedge d_{1}v\wedge\beta_{n}^{n-1}-\int_{\Omega}d_{0}u_{\epsilon}\wedge d_{1}u_{\epsilon}\wedge\beta_{n}^{n-1}+2\int_{\Omega}\gamma(v.u_{\epsilon})\wedge\beta_{n}^{n-1} &=& \int_{\Omega}d_{0}(v-u_{\epsilon})\wedge d_{1}(v+u_{\epsilon})\wedge\beta_{n}^{n-1} \\
              &=& \int_{\Omega}(u_{\epsilon}-v)\wedge \Delta(v+u_{\epsilon})\wedge\beta_{n}^{n-1}\geq0.
           \end{eqnarray*}
Then, $$\int_{\Omega}d_{0}v\wedge d_{1}v\wedge\beta^{n-1}\geq\int_{\Omega}d_{0}u_{\epsilon}\wedge d_{1}u_{\epsilon}\wedge\beta_{n}^{n-1}-2\int_{\Omega}\gamma(v.u_{\epsilon})\wedge\beta_{n}^{n-1}$$
By convergence Theorem, we have $$\int_{\Omega}d_{0}u_{\epsilon}\wedge d_{1}u_{\epsilon}\wedge\beta_{n}^{n-1}-2\int_{\Omega}\gamma(v.u_{\epsilon})\wedge\beta_{n}^{n-1}\longrightarrow\int_{\Omega}d_{0}u\wedge d_{1}u\wedge\beta_{n}^{n-1}-2\int_{\Omega}\gamma(v.u)\wedge\beta_{n}^{n-1} \ \ as \ \ \epsilon\searrow0.$$ Thus $$\int_{\Omega}d_{0}v\wedge d_{1}v\wedge\beta_{n}^{n-1}+2\int_{\Omega}\gamma(v.u)\wedge\beta_{n}^{n-1}\geq\int_{\Omega}d_{0}u\wedge d_{1}u\wedge\beta_{n}^{n-1}.$$
By Corollary 3.1 in \cite{WZ}, we have
\begin{eqnarray*}
  \int_{\Omega}d_{0}u\wedge d_{1}u\wedge\beta_{n}^{n-1} &\leq& \int_{\Omega}d_{0}b_{\gamma}\wedge d_{1}b_{\gamma}\wedge\beta_{n}^{n-1}+2|\int_{\Omega}\gamma(b_{\gamma},u)\wedge\beta_{n}^{n-1}| \\
    &\leq& \int_{\Omega}d_{0}b_{\gamma}\wedge d_{1}b_{\gamma}\wedge\beta_{n}^{n-1}+2\sqrt{\int_{\Omega}d_{0}u\wedge d_{1}u\wedge\beta_{n}^{n-1}}\sqrt{\int_{\Omega}d_{0}b_{\gamma}\wedge d_{1}b_{\gamma}\wedge\beta_{n}^{n-1}}
\end{eqnarray*}
and we obtain
$$\sqrt{\int_{\Omega}d_{0}u\wedge d_{1}u\wedge\beta_{n}^{n-1}}\leq 2\sqrt{\int_{\Omega}d_{0}b_{\gamma}\wedge d_{1}b_{\gamma}\wedge\beta_{n}^{n-1}}+\frac{\int_{\Omega}d_{0}b_{\gamma}\wedge d_{1}b_{\gamma}\wedge\beta_{n}^{n-1}}{\sqrt{\int_{\Omega}d_{0}u\wedge d_{1}u\wedge\beta_{n}^{n-1}}}.$$
So necessary we have $\int_{\Omega}d_{0}u\wedge d_{1}u\wedge\beta_{n}^{n-1}<+\infty.$
This finishes the lemma.
\end{proof}
\begin{pro}\label{10}
Fix $0\leq f\in L^{p}(\Omega)$ $(p>2).$ If $\psi\in C^{1,1}(\partial\Omega),$ Then $\Delta_{\mathbb{H}^{n}} u(\Omega,\psi,0)$ has finite mass in $\Omega.$ Moreover $\Delta_{\mathbb{H}^{n}} u(\Omega,\psi,f)$ also has finite mass in $\Omega.$
\end{pro}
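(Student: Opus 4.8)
Recall that $\Delta_{\mathbb{H}^{n}}v$ denotes the coefficient in $\Delta v\wedge\beta_{n}^{n-1}=\Delta_{\mathbb{H}^{n}}v\cdot\Omega_{2n}$, so the assertion ``$\Delta_{\mathbb{H}^{n}}v$ has finite mass'' means exactly $\int_{\Omega}\Delta v\wedge\beta_{n}^{n-1}<+\infty$. The plan, in both cases, is to squeeze the solution from below by a plurisubharmonic competitor that has the \emph{same} boundary values and an a priori finite $\beta_{n}^{n-1}$-trace, and then to invoke the first inequality of Lemma \ref{9} (which says that among functions continuous on $\overline{\Omega}$, plurisubharmonic in $\Omega$, with fixed boundary values, the pointwise larger one carries the smaller $\beta_{n}^{n-1}$-mass) to transfer the bound to the solution itself.

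First I would treat $u_{0}:=u(\Omega,\psi,0)$, which is continuous on $\overline{\Omega}$ by \cite{A3}. Extend $\psi$ to a function $\Psi\in C^{1,1}(\overline{\Omega})$ (possible since $\partial\Omega$ is smooth and $\psi\in C^{1,1}(\partial\Omega)$), let $\rho$ be a $C^{2}$ strictly plurisubharmonic defining function of $\Omega$, and set $\varphi:=A\rho+\Psi$. Since $\Delta\rho\geq c\,\beta_{n}$ on $\overline{\Omega}$ for some $c>0$ by strict plurisubharmonicity and compactness, while the coefficients $\Delta_{ij}\Psi$ are bounded, for $A$ large enough $\varphi\in PSH(\Omega)\cap C^{1,1}(\overline{\Omega})$; moreover $\varphi=\psi$ on $\partial\Omega$, so $\varphi$ is a competitor in the Perron envelope defining $u_{0}$ and hence $\varphi\leq u_{0}$ on $\Omega$. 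Because $\varphi\in C^{1,1}$, its second derivatives and therefore $\Delta_{\mathbb{H}^{n}}\varphi$ lie in $L^{\infty}(\Omega)$, so $\int_{\Omega}\Delta\varphi\wedge\beta_{n}^{n-1}\leq\|\Delta_{\mathbb{H}^{n}}\varphi\|_{L^{\infty}(\Omega)}\,|\Omega|<+\infty$. Applying the first inequality of Lemma \ref{9} with $u_{0}$ the larger and $\varphi$ the smaller function gives $\int_{\Omega}\Delta u_{0}\wedge\beta_{n}^{n-1}\leq\int_{\Omega}\Delta\varphi\wedge\beta_{n}^{n-1}<+\infty$.

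For $u:=u(\Omega,\psi,f)$, continuous on $\overline{\Omega}$ by \cite{SM}, I would reuse the barrier built in Lemma \ref{7.5}: there one writes $b_{f}=h+b_{\widetilde{f}}\in PSH(\Omega)\cap Lip_{1}(\Omega)$ with $b_{f}=0$ on $\partial\Omega$, where $b_{\widetilde{f}}\in PSH(\mathbb{B})\cap C^{2}(\mathbb{B})$ is a barrier on a large ball $\mathbb{B}\Supset\Omega$ and $h=u(\Omega,-b_{\widetilde{f}},0)$. Since $-b_{\widetilde{f}}|_{\partial\Omega}\in C^{2}(\partial\Omega)\subset C^{1,1}(\partial\Omega)$, the first part of the proposition applies to $h$, so $\Delta_{\mathbb{H}^{n}}h$ has finite mass; as $\Delta_{\mathbb{H}^{n}}b_{\widetilde{f}}$ is bounded, the identity $\Delta_{\mathbb{H}^{n}}b_{f}=\Delta_{\mathbb{H}^{n}}h+\Delta_{\mathbb{H}^{n}}b_{\widetilde{f}}$ (a sum of nonnegative measures, both functions being plurisubharmonic) shows $\Delta_{\mathbb{H}^{n}}b_{f}$ has finite mass. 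Hence $\varphi:=u_{0}+b_{f}$, which by Lemma \ref{7.5} is continuous on $\overline{\Omega}$, satisfies $\varphi\leq u$ on $\Omega$ and $\varphi=\psi$ on $\partial\Omega$, has $\int_{\Omega}\Delta\varphi\wedge\beta_{n}^{n-1}=\int_{\Omega}\Delta u_{0}\wedge\beta_{n}^{n-1}+\int_{\Omega}\Delta b_{f}\wedge\beta_{n}^{n-1}<+\infty$ by the first part. A last application of the first inequality of Lemma \ref{9} (with $u$ the larger, $\varphi$ the smaller) yields $\int_{\Omega}\Delta u\wedge\beta_{n}^{n-1}\leq\int_{\Omega}\Delta\varphi\wedge\beta_{n}^{n-1}<+\infty$.

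The routine points to watch are the direction of the inequality in Lemma \ref{9} (it is the pointwise-larger function that carries the smaller $\beta_{n}^{n-1}$-mass, which is exactly why a \emph{sub}barrier is the correct tool), the verification that $\varphi$ and the auxiliary competitors genuinely lie in $PSH(\Omega)\cap C(\overline{\Omega})$ and attain $\psi$ on $\partial\Omega$, and unwinding the wedge/Moore-determinant formalism to see that $\Delta_{\mathbb{H}^{n}}\varphi\in L^{\infty}(\Omega)$ for $\varphi\in C^{1,1}$. The only genuinely structural step is the reduction of part (ii) to part (i) through the homogeneous problem $u(\Omega,-b_{\widetilde{f}},0)$ concealed inside the barrier $b_{f}$; everything else is bookkeeping, so I expect no serious obstacle.
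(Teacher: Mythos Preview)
Your proposal is correct and follows the paper's scheme almost verbatim: build a psh sub-barrier with the same boundary values and finite $\beta_{n}^{n-1}$-mass, then apply the first inequality of Lemma~\ref{9}. For part~(ii) your competitor $\varphi=u(\Omega,\psi,0)+b_{f}$ is exactly the barrier the paper uses (the paper inserts the extra step of first bounding $\Delta_{\mathbb{H}^{n}}u(\Omega,0,f)$ via $b_{f}$ and then taking $v=u(\Omega,0,f)+u(\Omega,\psi,0)$, but this is cosmetic).

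The one genuine difference is in part~(i). The paper builds its sub-barrier as $b=A\rho+h$, where $h$ is the Perron--Bremermann envelope $h=\sup\{v\in PSH\cap C(\overline{\Omega}):v\le\psi\text{ on }\partial\Omega\}$, and then \emph{claims} that $\int_{\Omega}\Delta h\wedge\beta_{n}^{n-1}<\infty$, deferring the proof of this claim to \cite[Lemma~3.5]{N}. You instead take any $C^{1,1}$ extension $\Psi$ of $\psi$ and set $\varphi=A\rho+\Psi$, so that $\Delta_{\mathbb{H}^{n}}\varphi\in L^{\infty}(\Omega)$ is immediate. Your route is more elementary and self-contained; the paper's route, via the envelope, gives a slightly sharper barrier but at the cost of invoking an external regularity lemma. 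Either way the conclusion follows from Lemma~\ref{9} in the same manner.
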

\begin{proof}
We fix a defining function $\rho$ of $\Omega.$ Setting $\Omega=\{\rho<0\},$ $\rho\in C^{2}(\overline{\Omega}).$\\
First, we claim that
$$ h(q)=\sup\{v(q): v\in PSH(\Omega)\cap C(\overline{\Omega}), v\leq\psi \ \ on \ \ \partial\Omega\}$$ is psh function in $\Omega$ and is Lipschitz continuous in $\overline{\Omega},$ it satisfies $h=\psi$ on $\partial\Omega.$ Moreover
$$\int_{\Omega}\Delta h\wedge\beta_{n}^{n-1}<+\infty.$$
Assume $f=0$, set $u:=u(\Omega,\psi,0),$  we may choose $A>0$ big enough such that $A\rho+h\leq u$ in a neighborhood of $F\Subset\Omega,$ as $\rho<-\epsilon$ in $F$ for some $\epsilon>0,$ and $(\Delta(A\rho+h))^{n}\geq (\Delta A\rho)^{n}\geq 0$ in $\Omega\backslash F,$ by comparaison principle (Corollary 1.1 in \cite{WZ}), we have $A\rho+h\leq u$ in $\Omega\backslash F.$ \\ Therefore, $b:=A\rho+h\leq u$ in $\Omega$ and $b$ is Lipschitz continuous in $\overline{\Omega}.$ By Lemma \ref{9} and the fact that $\rho$ is $C^{2}$ smooth in a neighborhood of $\overline{\Omega},$ by using the claim above we get
$$ \int_{\Omega}\Delta u\wedge\beta_{n}^{n-1}\leq\int_{\Omega}\Delta b\wedge\beta_{n}^{n-1}<+\infty.$$
For the last part of this proposition, we are going to prove that $\Delta_{\mathbb{H}^{n}} u(\Omega,0,f)$ has finite mass in $\Omega.$ \\ Let $\widetilde{f}$ be the trivial extension of $f$ to a large ball $B$ containing $\Omega.$ Let $b_{\widetilde{f}}\in C^{2}(\mathbb{B})$ be a psh barrier for $MA(\mathbb{B},0,\widetilde{f})$ (see the proof of Lemma \ref{7.5} ). Then $b_{f}:=u(\Omega,-b_{\widetilde{f}},0)+b_{\widetilde{f}}$ is psh barrier for $MA(\Omega,0,f).$ Since $b_{\widetilde{f}}$ is smooth, we have $\Delta_{\mathbb{H}^{n}} b_{f}$ has finite mass in $\Omega.$ \\
On the other hand, we have $(\Delta b_{f})^{n}\geq fdV$ in $\Omega,$ so $b_{f}\leq u(\Omega,0,f)$ in $\Omega$ by comparaison principle (Corollary 1.1 in \cite{WZ}). Using Lemma \ref{9}, we get
$$\int_{\Omega}\Delta u(\Omega,0,f)\wedge\beta_{n}^{n-1}\leq\int_{\Omega}\Delta b_{f}\wedge\beta_{n}^{n-1}<+\infty.$$
Now set $v:=u(\Omega,0,f)+u(\Omega,\psi,0),$ $v$ is a psh function in $\Omega$ such that $v=\psi$ on $\partial\Omega$ and
$(\Delta v)^{n}\geq fdV$ in $\Omega.$ Since $\psi$ is $C_{1.1}$ in $\partial\Omega,$ we have $\Delta_{\mathbb{H}^{n}} u(\Omega,\psi,0)$ has finite mass in $\Omega,$ and $\Delta_{\mathbb{H}^{n}} u(\Omega,0,f)$ is so. Then $\Delta_{\mathbb{H}^{n}} v$ has finite mass in $\Omega,$ with $v\leq u(\Omega,\psi,f)$ in $\Omega.$ So by Lemma \ref{9}, we get
$$ \int_{\Omega}\Delta u(\Omega,\psi,f)\wedge\beta_{n}^{n-1}\leq\int_{\Omega}\Delta v\wedge\beta_{n}^{n-1}<+\infty.$$
For the proof of the claim, we let the reader to see the proof of Lemma 3.5 in \cite{N}.
\end{proof}
\begin{pro}\label{11}
Fix $0\leq f\in L^{p}(\Omega)$ $(p>2).$ If $\psi\in C^{1,1}(\partial\Omega),$ Then $\nabla u(\Omega,\psi,f)\in L^{2}(\Omega).$
\end{pro}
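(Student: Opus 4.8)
### Plan of Proof

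The idea is to bound the $L^2$-norm of $\nabla u$, where $u = u(\Omega,\psi,f)$, by relating the Dirichlet-type integral $\int_\Omega d_0 u \wedge d_1 u \wedge \beta_n^{n-1}$ to $\|\nabla u\|_{L^2(\Omega)}^2$ up to a dimensional constant, and then invoking the finiteness of that Dirichlet integral. First I would recall that, for a locally bounded psh function $u$, the coefficient $\Delta_{\mathbb{H}^n} u$ of $\Delta u \wedge \beta_n^{n-1}$ is (a positive multiple of) the ordinary Laplacian of $u$ on $\mathbb{R}^{4n}$, while the coefficient of $d_0 u \wedge d_1 u \wedge \beta_n^{n-1}$ is comparable to $|\nabla u|^2$. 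More precisely, writing $d_0 u \wedge d_1 u \wedge \beta_n^{n-1} = (\text{const}_n)\,|\nabla u|^2\,\Omega_{2n}$ up to a positive constant depending only on $n$ (this is the quaternionic analogue of $i\partial u \wedge \bar\partial u \wedge \beta^{n-1} = c_n |\nabla u|^2 dV$ in the complex case), the claim $\nabla u \in L^2(\Omega)$ is equivalent to $\int_\Omega d_0 u \wedge d_1 u \wedge \beta_n^{n-1} < +\infty$.

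Next I would produce a psh competitor $b$ that is $C^{1,1}$ (or at least Lipschitz) on $\overline{\Omega}$, agrees with $\psi$ on $\partial\Omega$, lies below $u$ in $\Omega$, and has finite Dirichlet integral $\int_\Omega d_0 b \wedge d_1 b \wedge \beta_n^{n-1} < +\infty$. This is exactly the kind of barrier constructed in the proof of Proposition \ref{10}: take a $C^2$ strictly psh defining function $\rho$, let $h(q) = \sup\{v(q) : v \in PSH(\Omega)\cap C(\overline\Omega),\ v \le \psi \text{ on } \partial\Omega\}$ (Lipschitz on $\overline\Omega$, equal to $\psi$ on $\partial\Omega$, with $\int_\Omega \Delta h \wedge \beta_n^{n-1} < \infty$), and combine with the barrier $b_f$ for $MA(\Omega,0,f)$ built from the trivial extension of $f$ to a large ball together with Corollary \ref{cor2}; then $b := b_f + u(\Omega,\psi,0)$ (or an appropriate $A\rho + h + b_f$ adjustment) is psh, Lipschitz on $\overline\Omega$, satisfies $b = \psi$ on $\partial\Omega$ and $(\Delta b)^n \ge f\,dV$, hence $b \le u$ in $\Omega$ by the comparison principle (Corollary 1.1 in \cite{WZ}). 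Since $b$ is Lipschitz on $\overline{\Omega}$, its gradient is bounded, so $\int_\Omega d_0 b \wedge d_1 b \wedge \beta_n^{n-1} \le C \|\nabla b\|_{L^\infty}^2\, V(\Omega) < +\infty$.

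Finally I would apply the second (comparison) inequality of Lemma \ref{9} with the pair $v := b \le u$, which are continuous on $\overline\Omega$, psh in $\Omega$, with $u = v = \psi$ on $\partial\Omega$: this gives exactly
$$\int_\Omega d_0 u \wedge d_1 u \wedge \beta_n^{n-1} \le 2\int_\Omega \gamma(b,u)\wedge\beta_n^{n-1} + \int_\Omega d_0 b \wedge d_1 b \wedge \beta_n^{n-1},$$
and, as the last sentence of Lemma \ref{9} asserts, the finiteness of $\int_\Omega d_0 b \wedge d_1 b \wedge \beta_n^{n-1}$ forces $\int_\Omega d_0 u \wedge d_1 u \wedge \beta_n^{n-1} < +\infty$ (the mixed term is controlled via Cauchy--Schwarz / Corollary 3.1 in \cite{WZ}, and absorbed). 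Translating back through the pointwise comparison between $d_0 u \wedge d_1 u \wedge \beta_n^{n-1}$ and $|\nabla u|^2\, dV$ yields $\nabla u \in L^2(\Omega)$.

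The main obstacle I anticipate is the first step: justifying the integration-by-parts identity relating $\int_\Omega d_0 u \wedge d_1 u \wedge \beta_n^{n-1}$ to $\|\nabla u\|_{L^2(\Omega)}^2$ for the (only continuous, not smooth) solution $u$, and making sure the Stokes-type formula (Lemma \ref{200}) and the convergence theorem for the quaternionic Monge--Ampère-type operators are applied to a legitimately regularized/truncated sequence so that no boundary contribution is lost. The barrier construction and the comparison-principle steps are essentially a repetition of arguments already carried out in Proposition \ref{10} and Lemma \ref{7.5}, so the real care is in the functional-analytic bookkeeping of the approximation.
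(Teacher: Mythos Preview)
Your strategy is the same as the paper's: reduce $\nabla u\in L^2$ to the finiteness of $\int_\Omega d_0u\wedge d_1u\wedge\beta_n^{n-1}$, build a psh barrier $b\le u$ with $b=\psi$ on $\partial\Omega$ and finite Dirichlet energy, and invoke Lemma~\ref{9}. One point to correct: your first candidate $b:=b_f+u(\Omega,\psi,0)$ is \emph{not} Lipschitz in general, since Corollary~\ref{cor2} only gives $u(\Omega,\psi,0)\in C^{0,1/2}$; so the line ``$\|\nabla b\|_{L^\infty}<\infty$'' does not apply to it. Your parenthetical alternative $A\rho+h+b_f$ (or simply $h+b_f$) \emph{is} Lipschitz, satisfies $(\Delta b)^n\ge(\Delta b_f)^n\ge f\,dV$ and $b=\psi$ on $\partial\Omega$, hence $b\le u$ by comparison, and the argument goes through in one step.

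The paper takes a slightly more circuitous route: it first introduces the auxiliary family $\rho_\gamma=-|\rho|^{1-\gamma}$ to handle densities with a prescribed blow-up $f\le C|\rho|^{-n\gamma}$ near $\partial\Omega$; specializing to $f=0$ (so $\gamma=0$, barrier $A\rho+h$) gives $\nabla u(\Omega,\psi,0)\in L^2$, and then for general $f$ it uses $v=u(\Omega,\psi,0)+b_f$, which has $\nabla v\in L^2$ (not $L^\infty$), together with Lemma~\ref{9}. Your one-shot Lipschitz barrier is cleaner; the paper's $\rho_\gamma$ detour buys extra information not needed for the proposition as stated.

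Finally, the ``main obstacle'' you flag is not one: the identification of the coefficient of $d_0u\wedge d_1u\wedge\beta_n^{n-1}$ with a dimensional multiple of $|\nabla u|^2$ is a pointwise algebraic identity in the first-order operators $\nabla_{j\alpha}$, not an integration by parts, and needs no boundary bookkeeping.
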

\begin{proof}
First, we claim that : for $0\leq\gamma<\frac{1}{2},$ the function $\rho_{\gamma}=-|\rho|^{1-\gamma},$ $\rho$ as in the proof of proposition \ref{10}, setting $(\Delta\rho)^{n}\geq g\beta^{n}$ on $\overline{\Omega},$ with $g>0$ $\rho_{\gamma}\in PSH(\Omega)\cap Lip_{1-\gamma}(\overline{\Omega})$ and satisfies
$$ \int_{\Omega}d_{0}\rho_{\gamma}\wedge d_{1}\rho_{\gamma}\wedge\beta_{n}^{n-1}<+\infty.$$
Now, assume that $f(q)\leq C|\rho(q)|^{-n\gamma}$ near $\partial\Omega$ for some $C>0.$ So there is a compact subset $E\Subset\Omega$ such that $f(q)\leq C|\rho(q)|^{-n\gamma}$ in $\Omega\backslash E.$\\
Then, we have \begin{eqnarray*}
                (\Delta\rho_{\gamma})^{n}=(d_{0}d_{1}(-(-\rho)^{1-\gamma})^{n} &=& ((1-\gamma)|\rho|^{-\gamma}\Delta\rho+\gamma(1-\gamma)|\rho|^{-1-\gamma}d_{0}\rho\wedge d_{1}\rho)^{n} \\
                  &\geq& (1-\gamma)^{n}|\rho|^{-n\gamma}g\beta_{n}^{n} \\
                   &\geq& \frac{g(1-\gamma)^{n}}{C}f\beta_{n}^{n} \ \ in \ \ \Omega\backslash E.
              \end{eqnarray*}
Therefore, we may choose $A>0$ big enough such that $b_{\gamma}:=A\rho_{\gamma}+h\leq u$ in a neighborhood of $E,$ and $$ (\Delta b_{\gamma})^{n}\geq (\Delta A\rho_{\gamma})^{n}\geq f\beta_{n}^{n} \ \ in \ \ \Omega\backslash E,$$
where $h$ as in the proof of proposition \ref{10}.
Then, by the comparison principle (Corollary 1.1 in \cite{WZ}), we obtain $b_{\gamma}\leq u$ in $\Omega\backslash E.$
So $b_{\gamma}\leq u$ in $\Omega$ and $b_{\gamma}\in Lip_{1-\gamma}(\overline{\Omega}).$ By Lemma \ref{9}, we have $$\int_{\Omega}d_{0}u\wedge d_{1}u\wedge\beta_{n}^{n-1}\leq\int_{\Omega}d_{0}b_{\gamma}\wedge d_{1}b_{\gamma}\wedge\beta_{n}^{n-1}+2\int_{\Omega}\gamma(b_{\gamma},u)\wedge\beta_{n}^{n-1},$$
and by the claim above, we get $\int_{\Omega}d_{0}u\wedge d_{1}u\wedge\beta_{n}^{n-1}<+\infty.$\\
For the general case, we set $f=0,$ we obtain $\int_{\Omega}d_{0}u\wedge d_{1}u\wedge\beta_{n}^{n-1}<+\infty,$ by the first part of this proof.
Now, for $f\neq0.$ Set $v:=u(\Omega,\psi,0)+b_{f},$ where $b_{f}$ is the plurisubharmonic barrier constructed in the proof of Lemma \ref{7.5}. We have $v=\psi+0=u$ on $\partial\Omega,$ $(\Delta v)^{n}\geq (\Delta b_{f})^{n}\geq fdV$ in $\Omega,$ so $v\leq u$ in $\Omega.$ Moreover, we have $\nabla u(\Omega,\psi,0)\in L^{2}(\Omega)$ and $\nabla b_{f}\in L^{2}(\Omega)$ hence $\nabla v\in L^{2}(\Omega).$
By Lemma \ref{9}, we get $$\int_{\Omega}d_{0}u\wedge d_{1}u\wedge\beta_{n}^{n-1}<+\infty.$$
Now, we prove the claim. we have
 $$\Delta\rho_{\gamma}=d_{0}d_{1}(-(-\rho)^{1-\gamma})= (1-\gamma)|\rho|^{-\gamma}\Delta\rho+\gamma(1-\gamma)|\rho|^{-1-\gamma}d_{0}\rho\wedge d_{1}\rho,$$
 and $$ d_{0}\rho_{\gamma}\wedge d_{1}\rho_{\gamma}\wedge\beta_{n}^{n-1}=(1-\gamma)^{2}|\rho|^{-2\gamma}d_{0}\rho\wedge d_{1}\rho\wedge\beta^{n-1}$$
 Since $-2\gamma>-1,$ we have $\int_{\Omega}d_{0}\rho_{\gamma}\wedge d_{1}\rho_{\gamma}\wedge\beta_{n}^{n-1}<+\infty.$ Then, the proof is finished.
\end{proof}
\textbf{Proof of main Theorem}
According to Proposition \ref{10} and Proposition \ref{11}, the assumptions of Theorem \ref{8} are satisfied. Thus, the main Theorem follows.

\addcontentsline{toc}{chapter}{Bibliographie}

\par \textbf{ Ibn Tofail University, Faculty of Sciences, Kenitra, Morocco.}
\par \textbf{ E-mail adress: fadoua.boukhari@uit.ac.ma }

\end{document}